\pdfoutput=1
\documentclass[11pt]{article}
\usepackage{graphicx} 
\usepackage{diagbox}
\usepackage{authblk}
\makeatletter
\renewcommand\AB@authnote[1]{\textsuperscript{#1}\hspace{5pt}}

\makeatother
\usepackage{hyperref}
\usepackage{algorithm}
\usepackage{algpseudocode}
\usepackage{notation-2}
\usepackage{arxiv-2}
\usepackage{amssymb}
\usepackage{mathrsfs}
\usepackage{float}
\usepackage{setspace}
\usepackage{lmodern}        

\renewcommand{\tilde}[1]{\widetilde{#1}}

\renewcommand{\hat}[1]{\widehat{#1}}

\newcommand{\pll}{\kern 0.3em/\kern -0.9em /\kern 0.3em}

\usepackage{microtype}
\title{\normalfont Simultaneous Pointwise Majorization for Mixed Tail Processes with Applications in Gaussian Chaos and Ergodic Diffusions
}
\begin{centering}
     \author[1]{Haichen Hu\thanks{{Email: \texttt{huhc@mit.edu}}}}
      \author[1]{David Simchi-Levi\thanks{{Email: \texttt{dslevi@mit.edu}}}}
\end{centering}
\affil[1]{{\small\itshape
Massachusetts Institute of Technology, 77 Massachusetts Avenue,
Cambridge, MA 02139, USA}}
\begin{document}
\maketitle
\singlespacing
\vspace{-2em}
\begin{abstract}
Classical chaining controls an indexed stochastic process through a
single worst-case bound and can therefore obscure substantial variation
across the index set. We develop the first simultaneous pointwise
majorization theory for Banach-valued processes with
finite-metric mixed-tail increments. Suppose that an anchored process
$(Z_t)_{t\in T}$ satisfies, for some integer $m\ge1$, pseudo-metrics
$d_1,\ldots,d_m$, and orders $\alpha_1,\ldots,\alpha_m>0$,

\begin{align*}
\PP\{
\|Z_t-Z_s\|>
\sum_{j=1}^m u^{1/\alpha_j}d_j(s,t)
\}
\le 2e^{-u},
s,t\in T.
\end{align*}

For ambient priors $\mu_1,\ldots,\mu_m$, let $v_j(t)
:=d_j(t,t_0), \Phi_j(t):=
\int_0^{4v_j(t)}
(
\log\frac{1}{\mu_j(B_{d_j}(t,r))}
)^{1/\alpha_j}dr$.
We prove that, $\forall \delta\in(0,1)$, with probability at least
$1-\delta$, simultaneously for all $t\in T$,
\begin{align*}
\|Z_t\|
\le
C_{m,\boldsymbol\alpha}
\sum_{j=1}^m
\{
\Phi_j(t)
+
v_j(t)
(\log(e/\delta))^{1/\alpha_j}
\}.
\end{align*}
Here $\boldsymbol\alpha:=(\alpha_1,\ldots,\alpha_m)$ and
$C_{m,\boldsymbol\alpha}$ depend only on $m$ and these tail orders.
The result subsumes single-metric sub-Weibull processes of every positive order as the case $m=1$. In the Gaussian setting, it sharpens the pointwise upper bound of \citet{xu2026} by eliminating the logarithmic terms generated by dyadic peeling. The proof retains the index-wise costs
of measure-generated admissible chains and synchronizes the regimes through a nested common refinement. Finally, we  apply our theorems to stationary diffusion empirical processes and decoupled Gaussian chaos to obtain simultaneous pointwise envelope bounds, which can further be applied to other statistics problems.
\end{abstract}
\vspace{-2em}

\section{Introduction}
\label{sec:introduction}

Uniform control of stochastic processes is a basic tool in statistics
and applied probability. It underlies the analysis of empirical risk
minimization, random matrices and structured embeddings, and statistical
procedures based on continuously observed stochastic systems. In many
of these problems, however, the index set is highly heterogeneous: some
indices belong to locally simple regions, whereas others lie in much
more complex parts of the parameter space. A single worst-case bound for
the entire process does not reflect this variation.

Generic chaining estimates the supremum of an indexed stochastic process
from the tail behavior of its increments and the metric complexity of
its index set. It grew out of the classical entropy method, the
Garsia--Rodemich--Rumsey inequality, and the majorizing-measure method
\citep{dudley1967sizes,garsia1971real,fernique1971regularite,
preston1972continuity,fernique1975regularite}. Talagrand's majorizing
measure theorem and the subsequent generic-chaining theory identify the
correct global complexity of Gaussian processes and provide a general
framework for controlling stochastic suprema
\citep{talagrand1987regularity,talagrand1992simple,
talagrand1994constructions,talagrand1996majorizing,
talagrand2001without,talagrand2005generic}. This framework has also led
to algorithmic constructions of majorizing measures and to deviation
bounds for processes with general Orlicz and mixed-tail increments
\citep{talagrand1990sample,borst2021optimizer,dirksen2015tail}. Its
standard output is nevertheless global: every index is controlled by
the same upper bound for the supremum.

Such a global output can be unnecessarily conservative when an index is
chosen after the process has been observed. A fixed-index concentration
inequality preserves the scale of that index but cannot simply be
substituted at a random, data-dependent choice. A supremum bound remains
valid after selection but charges the selected index with the most
complex part of the full parameter space. We seek both properties at
once: for every $\delta\in(0,1)$, we construct a deterministic function
$\mathfrak M_\delta:T\to[0,\infty]$ such that
\begin{align}
\PP\cbr{
\|Z_t\|\le \mathfrak M_\delta(t)
\text{ for every }t\in T
}
\ge 1-\delta,
\label{eq:intro_simultaneous_pointwise_majorization}
\end{align}
while $\mathfrak M_\delta(t)$ depends on the local metric complexity and
fluctuation scales at that same index. We call
\eqref{eq:intro_simultaneous_pointwise_majorization} a
\emph{simultaneous pointwise majorization}. Because the event is common
to all indices, the bound may be evaluated at a measurable index chosen
from the same realization.

We develop the first simultaneous pointwise majorization theory for
Banach-valued processes with finite-metric mixed-tail
increments. Let
$(Z_t)_{t\in T}$ be a Banach-valued process anchored at $t_0$, and
suppose that, for pseudo-metrics $d_1,\ldots,d_m$, tail orders
$\alpha_1,\ldots,\alpha_m>0$, and every $u\ge0$,
\begin{align}
\PP\cbr{
\|Z_t-Z_s\|>
\sum_{j=1}^m u^{1/\alpha_j}d_j(s,t)
}
\le 2e^{-u}.
\label{eq:intro_finite_regime_increment}
\end{align}
The same deviation parameter controls every term in
\eqref{eq:intro_finite_regime_increment}. Thus, the assumption neither
imposes $m$ independent single-metric inequalities nor presupposes a
decomposition of the process into $m$ components.

For every regime, we fix a Borel probability measure $\mu_j$ before
observing the process and define
\begin{align}
v_j(t):=d_j(t,t_0),\ 
\Phi_j(t):=
\int_0^{4v_j(t)}
\left(
\log\frac{1}{\mu_j(B_{d_j}(t,r))}
\right)^{1/\alpha_j}dr.
\label{eq:intro_finite_regime_functionals}
\end{align}
The integral in \eqref{eq:intro_finite_regime_functionals} is a
pointwise Fernique–Talagrand functional \citep{xu2026}. It measures the mass available
around $t$ at successive scales and is truncated at the distance from
$t$ to the anchor, rather than at a global radius of the index space.
Our main result shows that, with probability at least $1-\delta$,
simultaneously for every $t\in T$,
\begin{align}
\|Z_t\|
\le
C_{m,\boldsymbol\alpha}
\sum_{j=1}^m
\left[
\Phi_j(t)
+
v_j(t)
\left(\log\frac{e}{\delta}\right)^{1/\alpha_j}
\right].
\label{eq:intro_finite_regime_bound}
\end{align}
Here $\boldsymbol\alpha:=(\alpha_1,\ldots,\alpha_m)$, and
$C_{m,\boldsymbol\alpha}$ depends only on $m$ and these tail orders.
The conclusion is uniform in probability but pointwise in the
deterministic threshold. In particular, each regime preserves its own
pseudo-metric, prior measure, local ball-mass term, anchor radius,
and tail order.

When $m=1$, \eqref{eq:intro_finite_regime_bound} yields a simultaneous
pointwise envelope for every sub-Weibull process of arbitrary order
$\alpha>0$ and subsumes the finite Gaussian upper envelope of
\citet{xu2026} as a special case. As in that work, the bound is governed
by a fixed-measure ball-mass integral truncated at the individual anchor
radius. Even in the Gaussian setting, however, our bound is strictly
sharper: it eliminates the logarithmic loss introduced by dyadic peeling.
This improvement follows from constructing a measure-generated admissible
chain whose deterministic cost is retained at each index, rather than
replaced by a worst-case value over a radius or complexity shell. The same
construction also extends to arbitrary positive sub-Weibull
orders and separable index spaces.

When $m=2$, the result gives a pointwise counterpart of the familiar
mixed-tail generic-chaining bound. The two increment scales may have
different pseudo-metrics and different tail orders, but both act on the
same process increment through one deviation parameter. Applying the
single-metric result twice would therefore be invalid. Our proof instead
constructs a measure-generated chain for every pseudo-metric and embeds
their histories in a nested common refinement. The same idea extends to
any fixed finite number of regimes. A deterministic level shift keeps
the common refinement admissible and makes the dependence of
$C_{m,\boldsymbol\alpha}$ on the number of regimes and the tail orders
explicit. No failure probability is allocated over pointwise complexity
or anchor-radius shells.

Other related approaches include minorizing-metric and chaining bounds
for sample-path regularity, stochastic suprema, and concentration
\citep{kwapien2004sample,bednorz2006theorem,bednorz2007holder,
bednorz2014concentration}; local and problem-dependent complexity methods
in learning theory
\citep{bartlett2005local,koltchinskii2006local,xu2025towards}; and
PAC--Bayesian or multiscale coding formulations
\citep{audibert2007combining,maurer2010codes,chu2026codes}. These lines
of work address all-pairs increments, global suprema, shared random
quantities, or learning-theoretic risks. To our knowledge, they do not
provide, for finite-metric mixed-tail processes, the particular output
proved here: a single event on which each index receives an additive
fixed-measure ball-mass term and an anchor-distance confidence term for
every increment regime.

We provide two applications. The first concerns the occupation empirical
process of a stationary scalar diffusion. Such processes unify the empirical
distribution function, invariant-density estimators, and contrast
functions used in $M$-estimation
\citep{kutoyants1997nonparametric,vanzanten2003empirical,
vandervaart2005donsker,kutoyants2010goodness}. By invoking a Poincar\'e-based
Bernstein inequality \citep{gao2014bernstein,aeckerle2021concentration}, our result for two-metric mixed tail processes
turns these fixed-observable increments into one finite-horizon event
that controls the entire observable class while retaining the local
complexity and fluctuation scales of each observable.

The second application concerns higher-order decoupled Gaussian chaos,
which appears in Gaussian polynomial inequalities, multiple stochastic
integrals, canonical $U$-statistics, and random tensors
\citep{delapena1999decoupling,major2014multiple,bamberger2022hanson}. We apply Lata{\l}a's moment inequality \citep{latala2006estimates} and prove that decoupled Gaussian chaos satisfies our finite-metric mixed-tail property associated with the partition norms, thereby providing a simultaneous tensor-specific envelope for decoupled Gaussian chaos.

\paragraph{Paper Structure.} The remainder of the paper is organized as follows. As a starter, Section~2 develops
the theorem regarding two-metric mixed tail processes and records the corollary for sub-Weibull processes as a
specialization. Then, Section~3 establishes the theorem for general finite-metric mixed tail stochastic processes, which acts as our main result.
In Sections~4 and~5, we discuss the application of our theorems in diffusion empirical processes and the decoupled
Gaussian chaos process, respectively. The appendices collect the external mathematical tools and all
proofs.

\section{Pointwise Majorization for Two-Metric Mixed Tail Processes}\label{sec:mixed_tail-majorization}
In this section, we establish a pointwise majorization theorem for stochastic processes whose
increments satisfy a two-metric mixed tail condition. The two regimes may have
different tail exponents and may be governed by different pseudo-metrics.

Let $(\mathbb B,\|\cdot\|)$ be a separable Banach space, and let
$(T,\mathcal A)$ be a standard Borel space. Let
$d_1,d_2:T\times T\longrightarrow[0,\infty)$ be finite-valued, jointly
$\mathcal A\otimes\mathcal A$-measurable pseudo-metrics. Define the pseudo-metric $\rho$ as
\begin{align*}
\rho(s,t):=d_1(s,t)+d_2(s,t),
\qquad s,t\in T,
\end{align*}
and assume that $(T,\rho)$ is separable. All random objects below are
defined on a fixed probability space $(\Omega,\mathcal H,\PP)$.

Let $Z:\Omega\times T\longrightarrow\mathbb B$ be jointly measurable
from $(\Omega\times T,\mathcal H\otimes\mathcal A)$ to
$(\mathbb B,\mathcal B(\mathbb B))$, and write
\begin{align*}
Z_t(\omega):=Z(\omega,t),
\ \omega\in\Omega,\ t\in T.
\end{align*}
Now, we introduce some standard notation in studying empirical processes. First, we assume that $\cbr{Z_t}_{t\in T}$ has a continuous modification such that the map $t\mapsto Z_t(\omega)$ is continuous from $(T,d)$ to $(\mathbb B,\|\cdot\|)$. With a little abuse of notation, we still denote it by
$(Z_t)_{t\in T}$. Second, we assume that there exists an anchor $t_0\in T$ such that $Z_{t_0}=0$ almost surely.

To summarize, there exists $t_0\in T$ and an event
$\Omega_0\in\mathcal H$ with $\PP(\Omega_0)=1$ such that, for every
$\omega\in\Omega_0$, $Z_{t_0}(\omega)=0$
, the map $t\mapsto Z_t(\omega)$ 
is continuous from $(T,d)$ to $(\mathbb B,\|\cdot\|)$. Throughout this paper, we will work conditioned on the event $\Omega_0$ and thus we omit its dependence.

To study the suprema of our sub-Weibull process, we recall some background about generic chaining.
For the pseudo-metric space $(T,d)$, a sequence $\cT=(T_n)_{n\ge 0}$ of subsets of $T$ is \emph{admissible} if $|T_0|=1$, $|T_n|\le 2^{2^n}$. The $\alpha$-Talagrand functional \citep{dirksen2015tail} is defined as
\[
\gamma_{\alpha}(T,d):=\inf_{(T_n)}\sup_{t\in T}\sum_{n\ge 0}2^{n/\alpha}d(t,T_n),\ d(t,T_n)=\inf_{s\in T_n}d(t,s).
\]
The diameter of any set $S\subset T$ with respect to $d$ is defined as $\text{diam}_d(S)=\sup_{s,t\in S}d(t,s)$. Moreover, we use $B_d(t,r)$ to denote the local neighborhood centered at $t$ with radius $r$, i.e., $B_d(t,r):=\cbr{s:d(t,s)\le r}$. We can also define the $\alpha$-Talagrand functional on the subset $S$ as $$\gamma_{\alpha}(S,d):=\inf_{(S_n)}\sup_{t\in S}\sum_{n\ge 0}2^{n/\alpha}d(t,S_n),\ d(t,S_n)=\inf_{s\in S_n}d(t,s),$$
where $S_n$ is an admissible sequence of subsets of $S$.

Now we return to the discussion of $\cbr{Z_t}_{t\in T}$.  $(Z_t)_{t\in T}$ is said to be a two-metric mixed-tail process of
orders $\alpha_1,\alpha_2>0$ with respect to $(d_1,d_2)$ if, for every
$s,t\in T$ and every $u\ge0$, we have that
\begin{align}
\PP\cbr{
\|Z_t-Z_s\|>
u^{1/\alpha_1}d_1(t,s)
+u^{1/\alpha_2}d_2(t,s)
}
\le2e^{-u}.
\label{eq:mixed_tail_increment_condition}
\end{align}
The same parameter $u$ controls the two terms in
\eqref{eq:mixed_tail_increment_condition}, but each term retains its own
tail exponent and pseudo-metric.

The distinctive feature of
\eqref{eq:mixed_tail_increment_condition} is that one deviation
parameter activates two fluctuation regimes on the same probability
event. The two terms are therefore not separate tail bounds that are
assumed independently. In particular, neither $d_1$ nor $d_2$ is
required by itself to control the increments. When
$(\alpha_1,\alpha_2)=(1,2)$, the threshold takes the familiar
Bernstein form
\begin{align*}
u d_1(s,t)+\sqrt{u}\,d_2(s,t).
\end{align*}
In this case, $d_2$ describes the variance-driven sub-Gaussian regime,
whereas $d_1$ describes the sub-exponential correction that becomes
decisive for larger deviations. More general pairs
$(\alpha_1,\alpha_2)$ allow the two geometries to have different
sub-Weibull orders.

This structure occurs whenever an increment inequality involves two
genuinely different scale parameters. For a centered empirical process
based on $m$ independent observations, a Bernstein inequality typically
produces a variance metric of the form
$m^{-1/2}\|f_t-f_s\|_{L^2(P)}$ and an envelope metric of the form
$m^{-1}\|f_t-f_s\|_\infty$; this is the standard mixed
sub--Gaussian–sub-exponential setting \citep{dirksen2015tail}. For matrix-indexed centered quadratic forms of
a vector with independent centered sub-Gaussian coordinates, the
Hanson–Wright inequality yields a Frobenius-norm metric in the
sub-Gaussian term and an operator-norm metric in the sub-exponential
term \citep{rudelson2013hanson}. Centered additive functionals of
symmetric or reversible Markov processes provide another example when
a Poincar\'e or related functional inequality yields a finite-time
Bernstein form \citep{gao2014bernstein}. The two
applications developed later in this paper: quadratic chaos and
stationary scalar diffusions, are concrete instances of the latter
two mechanisms.

Fix probability measures $\mu_1$ and $\mu_2$ on
$(T,\mathcal A)$. For
$j\in\cbr{1,2}$, we define the pointwise anchor radius $v_j(t):=d_j(t,t_0)$
and the pointwise Fernique–Talagrand functionals of order $\alpha_1, \alpha_2$ \citep{xu2026} as
\begin{align*}
\Phi_{\mu_j,d_j}^{(\alpha_j)}(t)
:=
\int_0^{4v_j(t)}
\left(
\log\frac{1}{\mu_j(B_{d_j}(t,r))}
\right)^{1/\alpha_j}dr,
\end{align*}
where $B_{d_j}(t,r)
:=
\cbr{s\in T:d_j(s,t)\le r}$. The integrals are understood in the extended-real sense, with
$\log(1/0):=+\infty$. For notational simplicity, throughout this
section and its appendix, we write
\begin{align*}
\Phi_j(t)
:=
\Phi_{\mu_j,d_j}^{(\alpha_j)}(t),
\ j\in\cbr{1,2}.
\end{align*}

We now state the main theorem of this section.

\begin{theorem}
\label{thm:mixed_tail_majorization}
Under the preceding setup, let $\cbr{ Z_t}_{t\in T}$ be an anchored
two-metric mixed-tail process of orders $\alpha_1,\alpha_2>0$. There
exists a finite constant $C_{\alpha_1,\alpha_2}$, depending only on
$\alpha_1$ and $\alpha_2$, such that, for every $\delta\in(0,1)$, with
probability at least $1-\delta$, simultaneously for every $t\in T$, we
have
\begin{align*}
\|Z_t\|
\le
C_{\alpha_1,\alpha_2}
\cbr{
\Phi_1(t)+\Phi_2(t)
+v_1(t)
\left(
\log\frac{e}{\delta}
\right)^{1/\alpha_1}
+v_2(t)
\left(
\log\frac{e}{\delta}
\right)^{1/\alpha_2}
}.
\end{align*}
If either $\Phi_1(t)$ or $\Phi_2(t)$ is infinite, the corresponding
inequality holds trivially.
\end{theorem}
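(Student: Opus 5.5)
The plan is to build, for each regime $j\in\{1,2\}$, a measure-generated admissible chain whose cost at each index $t$ is bounded by $\Phi_j(t)$. We then merge the two chains into a nested common refinement so that one union bound over the increments of that refinement controls both regimes at once.

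\textbf{Step 1: single-metric measure-generated chains.} Fix $j$ and put $\varepsilon_n^{(j)}(t):=\inf\{r\ge0:\mu_j(B_{d_j}(t,r))\ge 2^{-2^n}\}$. Select, greedily from a countable $\rho$-dense subset, a maximal family of points whose balls $B_{d_j}(\cdot,\varepsilon_n^{(j)}(\cdot))$ are disjoint. Each such ball carries mass at least $2^{-2^n}$, so the family has at most $2^{2^n}$ elements. Every $t$ then has a point $\pi^{(j)}_n(t)$ of this family with $d_j(t,\pi_n^{(j)}(t))\le 2\varepsilon_n^{(j)}(t)+\varepsilon_n^{(j)}(\pi_n^{(j)}(t))$, and the usual Talagrand argument (choosing the point with the smallest radius among the intersecting ones) reduces this to $\lesssim\varepsilon^{(j)}_{n}(t)$. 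We include $t_0$ at level $0$ and stop the chain for $t$ at the first level $n_j(t)$ with $\varepsilon_n^{(j)}(t)\le v_j(t)$; before that level we link directly to $t_0$, at cost $v_j(t)$. Because $r\mapsto\log(1/\mu_j(B(t,r)))$ is monotone, a layer-cake comparison gives
\[
\sum_{n\ge n_j(t)}2^{n/\alpha_j}\varepsilon^{(j)}_n(t)\lesssim_{\alpha_j}\int_0^{4v_j(t)}\Big(\log\frac1{\mu_j(B_{d_j}(t,r))}\Big)^{1/\alpha_j}dr+v_j(t)=\Phi_j(t)+v_j(t).
\]
This keeps the cost index-wise and avoids any worst case over shells.

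\textbf{Step 2: common refinement and union bound.} Define partitions $\mathcal A_n^{(j)}$ from the projections $\pi^{(j)}_n$, made nested by intersecting over levels $\le n$ (at most $2^{2^{n+1}}$ cells). Let $\mathcal A_n:=\mathcal A^{(1)}_{n-1}\wedge\mathcal A^{(2)}_{n-1}$ after a level shift of $2$, so that $|\mathcal A_n|\le 2^{2^{n}}$. Choose in each cell one representative, which gives a common projection $\pi_n(t)$. Its $d_j$-distance to $t$ is at most the sum of the $d_j$-diameters along the regime-$j$ history, i.e.\ $\lesssim\varepsilon^{(j)}_{n-c}(t)$. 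Since the shift only rescales $2^{n/\alpha_j}$ by $2^{c/\alpha_j}$, this is absorbed into $C_{\alpha_1,\alpha_2}$. Next apply \eqref{eq:mixed_tail_increment_condition} to each pair $(\pi_n(t),\pi_{n-1}(t))$ with $u=2^{n}+\log(e/\delta)+\log(\pi^2 n^2/6)$, and also to the pair $(\pi_{n}(t),t_0)$. There are at most $2^{2^{n+1}}$ pairs per level, so after adjusting constants (using $u=c(2^n+\log(e/\delta))$ with $c\ge 3$) the total failure probability is at most $\delta$. On the good event, for every $t$ in the dense set,
\[
\|Z_t\|\le \sum_j\sum_n \big(2^{n}+\log\tfrac e\delta\big)^{1/\alpha_j}d_j(\pi_n(t),\pi_{n-1}(t)).
\]
Splitting $(a+b)^{1/\alpha}\lesssim_\alpha a^{1/\alpha}+b^{1/\alpha}$ gives two sums. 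The first is $\lesssim\Phi_j(t)+v_j(t)$ by Step 1.

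\textbf{Step 3: the confidence term (main obstacle).} The difficulty is the sum $\log(e/\delta)^{1/\alpha_j}\sum_n d_j(\pi_n(t),\pi_{n-1}(t))$. This must be $\lesssim v_j(t)\log(e/\delta)^{1/\alpha_j}$ with no logarithmic loss, but the raw sum of link lengths need not be comparable to $v_j(t)$. The first thing I would try is to stop using the chain at the level $N(t)=\min\{n:2^n\ge\log(e/\delta)\}$. Below that level, link $\pi_{N}(t)$ to $t_0$ in a single step, with $u=c(2^N+\log(e/\delta))\asymp\log(e/\delta)$. The union bound over the $\le 2^{2^{N}}$ points costs $e^{-c2^N}$, which is affordable. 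The link then costs $\log(e/\delta)^{1/\alpha_j}d_j(\pi_N(t),t_0)\le \log(e/\delta)^{1/\alpha_j}\big(v_j(t)+d_j(t,\pi_N(t))\big)$. For $n\ge N$ we have $\log(e/\delta)\le 2^n$, so those links are already absorbed into the $\Phi_j$ sum. The remaining term $d_j(t,\pi_N(t))\log(e/\delta)^{1/\alpha_j}\lesssim 2^{N/\alpha_j}\varepsilon_{N-c}^{(j)}(t)$ is controlled by $\Phi_j(t)+v_j(t)$. One must also check that truncation at $n_j(t)$ is compatible with this: when $\varepsilon_n^{(j)}>v_j$, take $\pi_n(t)$ to be $t_0$ in regime $j$. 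This is the delicate point for the common refinement, since the two regimes truncate at different levels while sharing one chain. I would handle it by defining each regime-$j$ projection to equal $t_0$ below $n_j(t)$ before refining.

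\textbf{Step 4: extension to all of $T$.} Finally, pass from the countable dense set to all of $T$ using continuity of $t\mapsto Z_t$. For this I would need the right-hand side to be lower semicontinuous in $\rho$, or else approximate $t$ by dense points $s$ with $\Phi_j(s)\to\Phi_j(t)$ (through monotonicity of ball masses, replacing $4v_j$ by a slightly larger radius absorbed into constants).
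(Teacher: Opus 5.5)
Your architecture is the paper's: measure-generated chains for each regime, a level-shifted common refinement, one coarse link to $t_0$ at level $\approx\log_2\log(e/\delta)$, and fine links at parameter $\asymp 2^n$. However, Step~2 has a genuine gap. With an arbitrary representative of each common cell, the claim $d_j(t,\pi_n(t))\lesssim\varepsilon^{(j)}_{n-c}(t)$ is false.

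Suppose $t$ and the representative $s$ share every regime-$j$ label up to level $n-c$. The triangle inequality through the common centre only gives $d_j(t,s)\lesssim\varepsilon^{(j)}_{n-c}(t)+\varepsilon^{(j)}_{n-c}(s)$, and the second term is the approximation error of $s$, not of $t$. The fibres of your projections have no diameter bound in terms of the radius of a particular member. A point $s$ in a region of negligible $\mu_j$-mass can share its whole label history with a point $t$ lying next to a heavy centre. If $s$ is picked as representative, the links of $t$ cost about $2^{n/\alpha_j}d_j(t,s)$, which has nothing to do with $\Phi_j(t)$.

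The missing device is to choose, in each cell $A\in\mathcal A_n$, a minimizer over $A$ of the combined cost $R_n(s):=\sum_{j=1}^2 2^{n/\alpha_j}d_j(s,T^{(j)}_{n-2})$. Here $T^{(j)}_k$ is the regime-$j$ centre set at level $k$. Then $\sum_j2^{n/\alpha_j}d_j(t,\pi_n(t))\le R_n(t)+R_n(\pi_n(t))\le 2R_n(t)$. Also $\sum_{n\ge2}R_n(t)=\sum_j2^{2/\alpha_j}\sum_{k\ge0}2^{k/\alpha_j}d_j(t,T^{(j)}_k)$, which your Step~1 controls.

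Only the weighted sum over regimes is bounded this way, since one representative cannot minimize each regime separately. That is exactly what the one-parameter increment condition needs. Your fine-link estimate and your bound on $(\log(e/\delta))^{1/\alpha_j}d_j(t,\pi_N(t))$ in Step~3 must be rewritten through $R_n(t)$.

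The paper makes the minimum exist by chaining on finite sets $K\ni t_0$ with the transferred measure $\frac12\delta_{t_0}+\frac12(q_{j,K})_\sharp\mu_j$, where $q_{j,K}$ is the nearest-point map onto $K$. This removes the need to order radii on a countable set. Its atom at $t_0$ also forces the level-$0$ mass radius below $v_j(t)$, so your separate truncation at $n_j(t)$ becomes automatic.

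Step~4 does not work as proposed either. Lower semicontinuity of the envelope is the wrong direction. Passing to the limit along $s_k\to t$ requires the envelopes at $s_k$ to be eventually at most a constant times the envelope at $t$.

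No such approximation exists for a dense set fixed independently of the $\Phi_j$, because ball masses $\mu_j(B_{d_j}(s,r))$ with $r<d_j(s,t)$ are not controlled by those around $t$. For example, if $\mu_j=\delta_{t^*}$, then $\Phi_j(t^*)=0$. Yet $\Phi_j(s)=+\infty$ for every $s$ with $d_j(s,t^*)>0$ and $v_j(s)>0$. So all dense-set bounds near $t^*$ can be vacuous, and continuity of $Z$ transfers nothing.

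The paper instead takes $D$ to be the union, over rational vectors $\boldsymbol q,\boldsymbol s$, of countable $\rho$-dense subsets of the Borel sublevel sets $H_{\boldsymbol q,\boldsymbol s}=\{t:\Phi_j(t)\le q_j,\ v_j(t)\le s_j,\ j=1,2\}$. Every $t$ with finite profiles is then a $\rho$-limit of points whose envelopes are bounded by the rational thresholds. Letting $q_j\downarrow\Phi_j(t)$ and $s_j\downarrow v_j(t)$ gives the claim. The finite-set events over $K_N\uparrow D$ are decreasing in $N$, so this step costs no extra probability.
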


The conclusion of Theorem \ref{thm:mixed_tail_majorization} is
pointwise in its deterministic envelope but simultaneous in its
probability event. More precisely, there exists a single event of
probability at least $1-\delta$ on which the displayed inequality holds
for every $t\in T$, while the right-hand side depends on the local
geometric quantities associated with the individual index $t$.

The first two terms $\Phi_1(t), \Phi_2(t)$ describe the local complexity and measure the local multiscale complexity around $t$. At each scale $r$,
the quantity $\mu(B_{d_i}(t,r))$ records the mass assigned by the ambient prior measure $\mu_i$ to the neighborhood of $t$. Thus, larger local ball masses give a smaller integrand and hence a smaller pointwise
complexity. Moreover, the integral is taken only up to $4v_i(t)$, so the complexity profiles involve only scales comparable to the distance from $t$ to the anchor.

The remaining terms are the pointwise confidence penalty. Since
$Z_{t_0}=0$ almost surely and $v_i(t)=d_i(t,t_0)$, the increment
condition gives, for every $u\ge0$, we have
\begin{align*}
\PP\cbr{
\|Z_t\|>u^{1/\alpha_1}v_1(t)+u^{1/\alpha_2}v_2(t)
}=
\PP\cbr{
\|Z_t-Z_{t_0}\|>u^{1/\alpha_1}v_1(t)+u^{1/\alpha_2}v_2(t)
}\le2e^{-u}.
\end{align*}
Therefore, $v_1(t),v_2(t)$ are the natural fluctuation scale at the index $t$.
Theorem \ref{thm:mixed_tail_majorization} preserves this pointwise
scale in the simultaneous bound: the confidence penalty is proportional
to $\sum_{i=1}^2v_i(t)
\left(
\log\frac{e}{\delta}
\right)^{1/\alpha_i}$, rather than to the global diameter of $T$.

An important feature of the theorem is that no dyadic-shell terms appear in the conclusion. In the proof, we retain the pointwise cost of the
measure-generated admissible chain. Thus, we do not first replace this
cost by a supremum over a subset and then distribute the failure
probability among dyadic shells. This is why no shell-allocation
logarithms appear.

The proof of Theorem \ref{thm:mixed_tail_majorization} relies on a
series of lemmas, and the full proof is deferred to
Appendix \ref{app:proofs_mixed_tail}.

For the frequently occurring Bernstein tail condition with ordering
$(\alpha_1,\alpha_2)=(1,2)$, Theorem \ref{thm:mixed_tail_majorization} implies
\begin{align*}
\|Z_t\|
\le
C_{1,2}
\cbr{
\Phi_1(t)+\Phi_2(t)
+v_1(t)\log\frac{e}{\delta}
+v_2(t)\sqrt{\log\frac{e}{\delta}}
}
\end{align*}
simultaneously for every $t\in T$. If one of the pseudo-metrics is
identically zero, its profile and anchor radius vanish, and the result
reduces to the following corollary.
\begin{corollary}\label{cor:sub-weibull}
Let $\cbr{Z_t}_{t\in T}$ be an anchored
sub-Weibull process of order $\alpha>0$, i.e., 
\begin{align*}
\PP\cbr{
\|Z_t-Z_s\|>u^{1/\alpha}d(t,s)
}
\le 2e^{-u}.
\end{align*}
Fix a probability
measure $\mu$ on $(T,\mathcal A)$. There exists a finite constant $C_\alpha$,
depending only on $\alpha$, such that, for every $\delta\in(0,1)$, with
probability at least $1-\delta$, simultaneously for every $t\in T$, we
have
\begin{align*}
\|Z_t\|
\le
C_\alpha
\left[
\Phi_{\mu,d}^{(\alpha)}(t)
+
v(t)
\left(
\log\frac{e}{\delta}
\right)^{1/\alpha}
\right].
\end{align*}
\end{corollary}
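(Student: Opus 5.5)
The plan is to obtain the corollary as the special case of Theorem~\ref{thm:mixed_tail_majorization} in which one regime is switched off. Set $d_1:=d$, $\alpha_1:=\alpha$, $\mu_1:=\mu$. For the second regime take $d_2\equiv 0$, any fixed order (say $\alpha_2:=\alpha$, so the constant still depends only on $\alpha$), and any probability measure $\mu_2$, for instance $\mu_2:=\mu$.

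The first task is to verify the hypotheses of the theorem.
\begin{itemize}
\item $d_2\equiv0$ is a finite, jointly measurable pseudo-metric.
\item $\rho=d+0=d$, so separability of $(T,\rho)$ is exactly separability of $(T,d)$, which is part of the standing setup.
\item Continuity of the modification with respect to $d$ and the anchoring $Z_{t_0}=0$ carry over unchanged.
\item For every $u\ge0$, the increment condition \eqref{eq:mixed_tail_increment_condition} reads
\[
\PP\{\|Z_t-Z_s\|>u^{1/\alpha}d(t,s)+u^{1/\alpha_2}\cdot 0\}\le 2e^{-u},
\]
which is precisely the sub-Weibull assumption.
\end{itemize}

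Next we compute the second-regime quantities. Since $v_2(t)=d_2(t,t_0)=0$, the upper limit $4v_2(t)$ of the integral defining $\Phi_2(t)$ is $0$. Hence $\Phi_2(t)=0$ as an integral over an empty interval. This holds regardless of the integrand, and it is consistent with the extended-real convention: for every $r\ge0$ we have $B_{d_2}(t,r)=T$, so $\mu_2(B_{d_2}(t,r))=1$ and the integrand vanishes anyway. The confidence term $v_2(t)(\log(e/\delta))^{1/\alpha_2}$ is likewise zero.

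Applying the theorem then gives, with probability at least $1-\delta$ and simultaneously for all $t$,
\[
\|Z_t\|\le C_{\alpha,\alpha}\bigl[\Phi^{(\alpha)}_{\mu,d}(t)+v(t)(\log(e/\delta))^{1/\alpha}\bigr].
\]
This is the corollary with $C_\alpha:=C_{\alpha,\alpha}$.

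The only point requiring care is that the constant depend on $\alpha$ alone. This is ensured by fixing $\alpha_2$ as a function of $\alpha$, which is why we chose $\alpha_2=\alpha$. Beyond that there is no real obstacle: the work is entirely in Theorem~\ref{thm:mixed_tail_majorization}.
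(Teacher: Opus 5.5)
Your proposal is correct and follows the same route as the paper, whose proof likewise sets $d_2\equiv 0$ and applies Theorem~\ref{thm:mixed_tail_majorization}. You also fill in details the paper leaves implicit: the hypothesis checks, the fact that $\Phi_2\equiv v_2\equiv 0$, and fixing $\alpha_2=\alpha$ so that the constant depends on $\alpha$ alone.
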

For $\alpha=2$, Corollary \ref{cor:sub-weibull} gives the
sub-Gaussian bound $\|Z_t\|
\le
C_2
\left[
\Phi_{\mu,d}^{(2)}(t)
+
v(t)\sqrt{\log\frac{e}{\delta}}
\right]$
simultaneously for every $t\in T$, which improves the previous results in \citet{xu2026} by removing the auxiliary peeling logarithms from the upper envelope. For $\alpha=1$, the
corresponding sub-exponential confidence penalty is $v(t)\log\frac{e}{\delta}$.

\section{Pointwise Majorization for Finite-Metric Mixed-Tail Processes}
\label{sec:multi_regime_majorization}

In this section, we generalize our pointwise majorization theorem to
stochastic processes whose increments are governed by any finitely many tail
regimes. The single-metric sub-Weibull theorem and the two-metric
mixed-tail theorem are recovered as the cases in which the number of
regimes is one and two, respectively.

We first give a brief review to the problem setup. Set $(\mathbb B,\|\cdot\|)$ to be a separable Banach space and let
$(T,\mathcal A)$ be a standard Borel space, and fix an integer $m\ge1$.
For every $j\in\cbr{1,\ldots,m}$, let
$d_j:T\times T\longrightarrow[0,\infty)$ be a finite-valued, jointly
$\mathcal A\otimes\mathcal A$-measurable pseudo-metric, and let
$\alpha_j>0$. We define
\begin{align*}
\rho(s,t)
:=
\sum_{j=1}^m d_j(s,t),
\ s,t\in T,
\end{align*}
and assume that $(T,\rho)$ is separable. All random objects below are
defined on a fixed probability space $(\Omega,\mathcal H,\PP)$.

Smilar to Section \ref{sec:mixed_tail-majorization}, for a nonempty subset $S\subseteq T$, we write $d_j(t,S)
:=
\inf_{s\in S}d_j(t,s)$.
Similarly, a sequence $\mathcal T=(T_n)_{n\ge0}$ of nonempty subsets of $T$ is called
\emph{admissible} if $|T_0|=1,\ |T_n|\le2^{2^n}$.

Let $Z:\Omega\times T\longrightarrow\mathbb B$ be jointly measurable
from $(\Omega\times T,\mathcal H\otimes\mathcal A)$ to
$(\mathbb B,\mathcal B(\mathbb B))$, and write
\begin{align*}
Z_t(\omega):=Z(\omega,t),
\ \omega\in\Omega,\ t\in T.
\end{align*}
We assume that the process has a modification, still denoted by
$(Z_t)_{t\in T}$, for which
there exist an anchor $t_0\in T$ and an event
$\Omega_0\in\mathcal H$ with $\PP(\Omega_0)=1$ such that, for every
$\omega\in\Omega_0$, we have $Z_{t_0}(\omega)=0$
and the map $t\mapsto Z_t(\omega)$ is continuous from $(T,\rho)$ to
$(\mathbb B,\|\cdot\|)$. We work with this modification throughout the section.

The measure-generated admissible sequences used below are anchored at
$T_0=\cbr{t_0}$.

We say that $(Z_t)_{t\in T}$ is an \emph{$m$-regime mixed-tail process}
of orders $\boldsymbol\alpha=(\alpha_1,\ldots,\alpha_m)$ with respect to
$(d_1,\ldots,d_m)$ if, for every $s,t\in T$ and every $u\ge0$,
\begin{align}
\PP\cbr{
\|Z_t-Z_s\|
>
\sum_{j=1}^m u^{1/\alpha_j}d_j(t,s)
}
\le2e^{-u}.
\label{eq:multi_regime_increment_condition}
\end{align}
The same deviation parameter $u$ controls all the terms in
\eqref{eq:multi_regime_increment_condition}. Thus, the assumption is not
equivalent to imposing $m$ separate single-metric increment inequalities.
In particular, no individual pseudo-metric $d_j$ is required by itself to
control the increments, and we do not assume that the process decomposes
as a sum of $m$ processes.

The different terms in \eqref{eq:multi_regime_increment_condition} may
become dominant at different deviation levels. For instance, if
$\alpha_j=2/j$, then the $j$th term is proportional to $u^{j/2}$.
This sequence of powers is the one that appears in the moment and tail
estimates for decoupled Gaussian chaoses. When $m=2$ and
$(\alpha_1,\alpha_2)=(1,2)$, the threshold reduces to the familiar
Bernstein tail condition $u\,d_1(s,t)+\sqrt{u}\,d_2(s,t)$. When $m=1$, condition \eqref{eq:multi_regime_increment_condition} is the
usual sub-Weibull increment condition of order $\alpha_1$.

For every $j\in\cbr{1,\ldots,m}$, fix a Borel probability measure
$\mu_j$ on $(T,\mathcal A)$ before observing the process. We define the
pointwise anchor radius
\begin{align*}
v_j(t):=d_j(t,t_0)
\end{align*}
and the pointwise
Fernique-Talagrand functional
\begin{align}
\Phi_j(t)
:=
\int_0^{4v_j(t)}
\left(
\log\frac{1}{\mu_j(B_{d_j}(t,r))}
\right)^{1/\alpha_j}dr,
\label{eq:multi_regime_pointwise_functional}
\end{align}
where $B_{d_j}(t,r)
:=
\cbr{s\in T:d_j(s,t)\le r}$.
The measure, the neighborhood, and the power in
\eqref{eq:multi_regime_pointwise_functional} are allowed to depend on
the regime. The integral is taken only over scales comparable with the
$d_j$-distance from the individual index $t$ to the anchor.

We now state the main theorem about our $m$-regime mixed tail process.
\begin{theorem}[Multi-regime pointwise majorization]
\label{thm:multi_regime_majorization}
Under the preceding setup, let $(Z_t)_{t\in T}$ be an anchored
$m$-regime mixed-tail process of orders
$\boldsymbol\alpha=(\alpha_1,\ldots,\alpha_m)$. There exists a finite
constant
\begin{align*}
C_{m,\boldsymbol\alpha}
=
C_{m,\alpha_1,\ldots,\alpha_m}<\infty,
\end{align*}
depending only on $m$ and the tail orders, such that, for every
$\delta\in(0,1)$, with probability at least $1-\delta$,
simultaneously for every $t\in T$, we have
\begin{align}
\|Z_t\|
\le
C_{m,\boldsymbol\alpha}
\sum_{j=1}^m
\left[
\Phi_j(t)
+
v_j(t)
\left(
\log\frac{e}{\delta}
\right)^{1/\alpha_j}
\right].
\label{eq:multi_regime_pointwise_bound}
\end{align}
\end{theorem}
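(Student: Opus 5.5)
The plan is to run a generic-chaining argument in which the chain is generated separately by each prior $\mu_j$, the $m$ chains are merged into one nested admissible sequence, and the deterministic cost of the merged chain is kept index by index. Only one union bound over chain links is used, and no probability is split across shells.

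\emph{Step 1: single-regime measure-generated chains.} For each $j$, I build a nested family of partitions $(\mathcal P^{(j)}_n)_{n\ge0}$ of $T$ from $\mu_j$, with $\mathcal P^{(j)}_0=\{T\}$, and centres $\pi^{(j)}_n(t)$ with $\pi^{(j)}_0(t)=t_0$. The targets are:
\begin{itemize}
\item the cardinality bound $|\mathcal P^{(j)}_n|\le 2^{2^n}$;
\item the pointwise estimate
\[
\sum_{n\ge1}2^{n/\alpha_j}d_j\big(\pi^{(j)}_n(t),\pi^{(j)}_{n-1}(t)\big)\le K_{\alpha_j}\big[\Phi_j(t)+v_j(t)\big].
\]
\end{itemize}
The natural radius at $t$ and level $n$ is
\[
r_n^{(j)}(t):=\inf\{r:\mu_j(B_{d_j}(t,r))\ge 2^{-2^{n}}\}.
\]
I would select centres greedily as a maximal family of points whose balls of radius $r_n(\cdot)$ are disjoint. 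Disjointness plus mass $\ge 2^{-2^n}$ gives the cardinality bound, and separability of $(T,\rho)$ makes the selection countable and measurable. I then intersect with the previous level to get nestedness, which at most squares the cardinality and is absorbed by the shift in Step 2. The relation $2^{n/\alpha}r_n(t)\lesssim\int_{r_{n+1}}^{r_n}(\log 1/\mu(B(t,r)))^{1/\alpha}dr$ gives the pointwise sum.

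Truncating at $4v_j(t)$ needs care. For levels where $r_n(t)>v_j(t)$, I map $t$ to the anchor $t_0$, paying nothing. The first nontrivial link then costs $O(v_j(t))$ times $2^{n_0/\alpha}$, and $2^{n_0/\alpha}\lesssim(\log 1/\mu(B(t,v_j)))^{1/\alpha}$, which is bounded by the integral over $[v_j,4v_j]$ since ball masses increase in $r$. The factor 4 should come from triangle inequalities between a point and the centre of its cell.

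\emph{Step 2: common refinement.} Let $\mathcal Q_n$ be the common refinement of $\mathcal P^{(1)}_{n},\dots,\mathcal P^{(m)}_{n}$. It has at most $2^{m2^n}$ cells. With the level shift $k=\lceil\log_2 m\rceil$, the sequence $\mathcal R_n:=\mathcal Q_{n-k}$ is admissible. I choose one centre $\pi_n(t)$ per cell of $\mathcal R_n$, taken inside the cell. Because the partitions are nested, $\pi_n(t)$ and $\pi_{n-1}(t)$ lie in the same cell of each $\mathcal P^{(j)}_{n-k-1}$. The $d_j$-diameter of that cell near $t$ is controlled by $r^{(j)}_{n-k-1}$-type quantities at $t$; this is the property Step 1 must supply, namely a diameter bound of the form $\Delta_j(\text{cell of }t)\lesssim r_{n}(t)$. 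The shift costs a factor $2^{k/\alpha_j}\le (2m)^{1/\alpha_j}$ in each regime.

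\emph{Step 3: probability.} For a link at level $n$, apply the increment condition with $u=2^{n+1}+\log(e/\delta)$ (plus a constant). The number of links at level $n$ is at most $2^{2^{n+1}}$, so the union bound over links sums to at most $\delta$. On this event, using $(a+b)^{1/\alpha}\le c_\alpha(a^{1/\alpha}+b^{1/\alpha})$, each link satisfies
\[
\|Z_{\pi_n(t)}-Z_{\pi_{n-1}(t)}\|\le \sum_j c\,\big(2^{n/\alpha_j}+(\log e/\delta)^{1/\alpha_j}\big)\,d_j\big(\pi_n(t),\pi_{n-1}(t)\big).
\]
Summing over $n$ and using continuity gives $Z_{\pi_n(t)}\to Z_t$. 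The $2^{n/\alpha_j}$ parts give $\Phi_j(t)+v_j(t)$ by Steps 1 and 2.

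\emph{Main obstacle.} The $(\log e/\delta)^{1/\alpha_j}$ parts multiply $\sum_n d_j(\pi_n,\pi_{n-1})$, which is not obviously $O(v_j(t))$. I would handle this by making the per-level deviation geometric: use $u_n=2^{n+1}+2^{n-n_0}\log(e/\delta)$ relative to a suitable base level, or more simply bound $\sum_n d_j(\pi_n,\pi_{n-1})\le\sum_n 2^{n/\alpha_j}d_j(\cdots)$. The simpler bound would cost a $(\log e/\delta)^{1/\alpha_j}$ factor on $\Phi_j$, which is not acceptable.

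The approach I would actually try is the standard trick of applying the union bound with $u=2^{n}\cdot s$ for $s\ge 1$. This gives failure probability $\sum_n 2^{2^{n+1}}e^{-2^n s}\lesssim e^{-s}$. On that event, the chaining sum is bounded by $s^{1/\alpha_j}$ times the cost, which again yields $s^{1/\alpha_j}(\Phi_j+v_j)$.

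Getting the confidence term to multiply $v_j(t)$ alone, and not $\Phi_j(t)$, will require the first link $t_0\to\pi_{n_0}(t)$ to be treated separately with $u=\log(e/\delta)+2^{n_0}$. All later links would use $u=2^{n+1}+\log(e/\delta)\,2^{-(n-n_0)}$-type weights whose $1/\alpha_j$ powers sum geometrically against the decaying cell diameters. Making this decay uniform for every $t$ simultaneously, across all $m$ regimes sharing one index $n$, is the step I expect to be hardest.
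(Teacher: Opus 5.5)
\textbf{There is a genuine gap.} Your architecture matches the paper's: one measure-generated net per regime, a common refinement shifted by about $\log_2 m$ levels, and a single union bound. However, the two steps that carry the pointwise content are missing.

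\emph{First gap (Step 2).} You ask Step 1 to deliver $\Delta_j(\text{cell of }t)\lesssim r^{(j)}_n(t)$, and a greedy packing partition does not have this property. Suppose $s$ is a heavy atom, $t$ sits right next to $s$, and $t'$ is at distance $D$ from $s$ with no mass nearer. Then $r_n(s)=0$, $r_n(t)$ is tiny and $r_n(t')=D$. Both $t$ and $t'$ are rejected in favour of $s$, so the cell of $t$ has diameter about $D$. What is true is only $d_j(t,\text{own centre})\le 2r^{(j)}_n(t)$. After intersecting over levels and over the $m$ metrics, that centre need not lie in $t$'s common cell. An arbitrary representative $s'$ then only gives $d_j(t,s')\le d_j(t,T_{j,k})+d_j(s',T_{j,k})$, which is the chaining cost at $s'$, not at $t$. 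The paper's key device fixes this in two moves:
\begin{itemize}
\item It defines the common cells by agreement of the nearest-point labels in all $m$ nets through level $n-b_m$.
\item It takes as representative of each cell the minimiser over that cell of $R_n(s)=\sum_j2^{n/\alpha_j}d_j(s,T_{j,n-b_m})$.
\end{itemize}
This gives $\sum_j2^{n/\alpha_j}d_j(t,\pi_n(t))\le R_n(t)+R_n(\pi_n(t))\le2R_n(t)$. Hence the weighted link length $\sum_j2^{n/\alpha_j}d_j(\pi_n(t),\pi_{n-1}(t))$ is at most $2R_n(t)+2\max_j2^{1/\alpha_j}R_{n-1}(t)$. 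No diameter control is needed, and Step 1 only has to produce admissible sets, not nested partitions.

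\emph{Second gap (the confidence term).} The step you flag as hardest is resolved by tying the starting level to $\delta$ rather than to $t$. Set $u=\log(e/\delta)$ and $\ell=\lfloor\log_2u\rfloor$; this level is the same for every $t$ and every regime.
\begin{itemize}
\item Jump once from $t_0$ to $\pi_\ell(t)$ with parameter $16u$. There are at most $2^{2^\ell}\le2^u$ such links.
\item Use parameter $16\cdot2^n$ for the links at levels $n>\ell$. This already exceeds $16u$, so these links need no confidence augmentation.
\end{itemize}
The coarse jump costs $(16u)^{1/\alpha_j}\bigl(v_j(t)+d_j(t,\pi_\ell(t))\bigr)$. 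Because $u<2^{\ell+1}$, the second part is at most a constant times $2^{\ell/\alpha_j}d_j(t,\pi_\ell(t))$, which is absorbed by $2R_\ell(t)$. Hence $(\log(e/\delta))^{1/\alpha_j}$ multiplies only $v_j(t)$. For $u<2^{b_m}$ one simply uses $u_0=2^{b_m}$.

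\emph{A smaller error in Step 1.} The map $r\mapsto(\log1/\mu_j(B_{d_j}(t,r)))^{1/\alpha_j}$ is nonincreasing. So $2^{n_0/\alpha_j}v_j(t)$ must be compared with $\int_0^{v_j(t)}$, not with $\int_{v_j(t)}^{4v_j(t)}$. The paper gets the truncation differently:
\begin{itemize}
\item On a finite $K$ it uses the transferred measure $\tfrac12\delta_{t_0}+\tfrac12(q_{j,K})_\sharp\mu_j$, whose atom of mass $\tfrac12$ at $t_0$ forces $r_{j,0}(t)\le v_j(t)$.
\item It then passes to all of $T$ through countable dense subsets of the sublevel sets of $(\Phi_j,v_j)$ and path continuity. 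This also avoids running a greedy selection on an infinite set.
\end{itemize}
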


The conclusion is simultaneous in its probability event but pointwise in
its deterministic envelope. More precisely, one event controls every
$t\in T$, while the right-hand side of
\eqref{eq:multi_regime_pointwise_bound} retains the local ball masses and
the anchor radius of that particular index in every one of the $m$
pseudo-metrics. The theorem does not replace these quantities by the
corresponding global diameters or by a worst-case complexity over the full
index space.

The dependence of the constant on $m$ is explicit in the proof. Let
\begin{align*}
b_m
:=
1+\ceil{\log_2m}.
\end{align*}
At every level $n\ge b_m$, the common partition records all
metric-specific nearest-net labels only through level $n-b_m$. Since
\begin{align*}
m\sum_{k=0}^{n-b_m}2^k
\le
2^n,
\end{align*}
the resulting common refinement remains admissible. The corresponding
finite-set chaining estimate contains the deterministic factor
$\max_j2^{b_m/\alpha_j}$, together with constants depending only on the
tail orders. We retain this dependence inside
$C_{m,\boldsymbol\alpha}$ rather than claiming that the constant is
uniform in the number of regimes.

An important feature of the theorem is that no failure probability is
allocated over pointwise complexity or radius shells. For every metric,
the fixed measure $\mu_j$ generates an admissible chain whose cost is
retained at each individual index. The chains are then synchronized by a
nested common refinement. This deterministic refinement makes all chains
telescope along the same process increments, while the probabilistic
argument still uses one event. Consequently, the final
confidence term contains only the common confidence parameter and the
regime-specific anchor radius.

The proof of Theorem \ref{thm:multi_regime_majorization}, including the
finite-set common-refinement construction and the passage to the full
separable index space, is deferred to
Appendix \ref{app:proofs_multi_regime}.

\section{Pointwise Envelopes for Diffusion Empirical Processes}
\label{sec:pointwise_ergodic_diffusion}

In this section, we give our first application of the pointwise
majorization theorem. We first introduce a stationary scalar diffusion and
the occupation empirical process indexed by a class of observables. We then
identify its two increment geometries and apply Theorem
\ref{thm:mixed_tail_majorization}.

Let $W$ be a standard Brownian motion and consider
\begin{align}
dX_s
&=
b(X_s)ds+\sigma(X_s)dW_s,
\ s\ge0.
\label{eq:diffusion_sde}
\end{align}
We also use $a(x)$ to denote the square of the function $\sigma(x)$, i.e., $a(x):=\sigma^2(x)$.
\begin{assumption}
\label{ass:diffusion_coefficients}
The functions $b,\sigma:\RR\to\RR$ are globally Lipschitz. There exist
$0<\underline\sigma\le\overline\sigma<\infty$, $A>0$, and $\gamma>0$
such that $\underline\sigma
\le |\sigma(x)|
\le \overline\sigma,
\ \text{if}\ x\in\RR$, and that $\frac{b(x)}{a(x)}\operatorname{sgn}(x)
\le-\gamma,
\ \text{if}\ |x|\ge A$.
\end{assumption}
We define
$G(x):=
\int_0^x\frac{2b(y)}{a(y)}dy,\ 
\mathsf Z:=
\int_{\RR}\frac{e^{G(x)}}{a(x)}dx,\ 
\pi(x)
:=
\frac{e^{G(x)}}{\mathsf Z a(x)},\ 
\ell(x):=
a(x)\pi(x)
=
\frac{e^{G(x)}}{\mathsf Z}$.

Global Lipschitz continuity implies the local Lipschitz and linear-growth
conditions in \citet{aeckerle2021concentration}. Therefore, we know that there is a unique global strong solution with ergodic properties,
$0<\mathsf Z<\infty$, and the density $\pi$ defined above is invariant. We start the diffusion
in equilibrium, $X_0\sim\pi$, independently of $W$, and write $\PP_\pi$
and $\EE_\pi$ for the resulting stationary law.

Now, we specify the diffusion empirical process that we are going to study. For any Borel bounded function class $\cF$ and fixed observation horizon $\tau>0$, given the realized diffusion process $\cbr{X_s}_{0\le s\le \tau}$, we denote the empirical occupation measure as $\widehat\pi_\tau:=
\frac1\tau\int_0^\tau\delta_{X_s}ds$.

Then, for any $f\in\cF$, we denote
\begin{align*}
\widehat\pi_\tau(f):=
\frac1\tau\int_0^\tau f(X_s)ds,\ 
\pi(f):=
\int_{\RR}f(x)\pi(x)dx,\ 
\overline f:=f-\pi(f).
\end{align*}
Intuitively, $\widehat\pi_\tau(f)$ is the sample path average of $f$ over the trajectory $\cbr{X_s}_{0\le s\le \tau}$. $\pi(f)$ is the expectation of $f$ under the invariant probability measure, and $\overline f$ is its centralized version.

Then, our \emph{diffusion empirical process} $\cbr{Z_{\tau}(f)}_{f\in\cF}$ indexed by $\cF$ is defined as
\begin{align*}
Z_\tau(f)
&:=
\sqrt\tau\rbr{\widehat\pi_\tau(f)-\pi(f)}
=
\frac1{\sqrt\tau}\int_0^\tau\overline f(X_s)ds,\ f\in\cF.
\end{align*}
Throughout this section, $\tau$ is fixed, and we
apply pointwise majorization to the process $\cbr{Z_\tau(f)}_{f\in\cF}$ indexed by
the observable $f$. In particular, we have that $Z_\tau(0)=0,
\
Z_\tau(f)-Z_\tau(h)=Z_\tau(f-h)$.

Diffusion empirical processes provide a common framework for statistical
procedures based on continuous observations of an ergodic diffusion.
Indexing the process by indicator functions yields the empirical
distribution function, kernel translates yield estimators of the invariant
density, and classes of contrast functions arise in $M$-estimation
\citep{kutoyants1997nonparametric,vanzanten2003empirical}.
\citet{vandervaart2005donsker} developed the corresponding Donsker theory,
with applications to the empirical distribution function and the local-time
density estimator, while \citet{kutoyants2010goodness} used these objects
to construct goodness-of-fit tests.

For finite observation horizons, \citet{aeckerle2021concentration}
established uniform concentration inequalities for diffusion empirical
processes and applied them to invariant-density estimation. They
identified two natural increment geometries arising from a
Poincar\'e-based Bernstein inequality: the uniform-norm geometry and the asymptotic-variance geometry. 

Building on these results, we provides a simultaneous observable-dependent bound that retains the local
complexity of both geometries, rather than replacing them with a single
class-wide supremum.

We now define the two scales. Let $0<C_P<\infty$ be the explicit
admissible Poincar\'e constant constructed in Lemma
\ref{lem:tool_loukianov_poincare} such that
\begin{align}
\operatorname{Var}_\pi(h)
\le
\frac{C_P}{2}
\int_{\RR}a(x)|h'(x)|^2\pi(x)dx
\label{eq:diffusion_poincare}
\end{align}
for every locally absolutely continuous $h\in L^2(\pi)$ for which the
right-hand side is finite.

For a bounded Borel observable $f$, we define
\begin{align*}
R_f(x):=
\int_x^\infty\overline f(y)\pi(y)dy
=
-\int_{-\infty}^x\overline f(y)\pi(y)dy,\ \cV_\pi(f)^2
:=
4\int_{\RR}\frac{R_f(x)^2}{\ell(x)}dx.
\end{align*}
The covariance formula of \citet{vandervaart2005donsker} identifies
$\cV_\pi(f)^2$ as the scalar central-limit variance. Appendix
\ref{app:proofs_diffusion} checks the normalization and finiteness, proves
the seminorm property, and combines that formula with a finite-time
Bernstein bound to identify the same quantity as the long-run variance.

Thus, for $f,h\in\cF$, we define the following two pseodu-metrics:
\begin{align*}
d_1(f,h)
:=
\frac{C_P}{\sqrt\tau}
\left\|(f-h)-\pi(f-h)\right\|_\infty,\ 
d_2(f,h):=
\sqrt2\,\cV_\pi(f-h),
\end{align*}
where $\|g\|_\infty:=\sup_{x\in\RR}|g(x)|$.

\begin{assumption}
\label{ass:diffusion_index_class}
The set $\cF$ is a standard Borel space whose elements are bounded Borel
functions on $\RR$. It contains the zero function, and the evaluation map
$(f,x)\mapsto f(x)$ is jointly Borel on $\cF\times\RR$. The maps $d_1$
and $d_2$ are jointly Borel, and $(\cF,d_1)$ is separable.
\end{assumption}

\begin{proposition}[Mixed-tail property of the occupation process]
\label{prop:diffusion_mixed_tail}
Under Assumptions \ref{ass:diffusion_coefficients} and
\ref{ass:diffusion_index_class}, $d_1$ and $d_2$ are finite-valued
pseudo-metrics and $(\cF,d_1+d_2)$ is separable. The map
$(\omega,f)\mapsto Z_\tau(f)(\omega)$ is jointly measurable, its sample
paths are almost surely $(d_1+d_2)$-continuous, and $Z_\tau(0)=0$ almost
surely. Moreover, for every $f,h\in\cF$ and $u\ge0$,
\begin{align}
\PP_\pi\cbr{
|Z_\tau(f)-Z_\tau(h)|
>
u\,d_1(f,h)+\sqrt u\,d_2(f,h)
}
\le2e^{-u}.
\label{eq:diffusion_mixed_tail}
\end{align}
Consequently, $\cbr{Z_\tau(f)}_{f\in\cF}$ is an anchored two-metric
mixed-tail process of orders $(\alpha_1,\alpha_2)=(1,2)$.
\end{proposition}

Fix Borel probability measures $\mu_1$ and $\mu_2$ on $\cF$ before the
trajectory is observed. For $j\in\cbr{1,2}$, let
\begin{align*}
v_j(f):=
d_j(f,0),\ 
B_{d_j}(f,r):=
\cbr{h\in\cF:d_j(f,h)\le r}.
\end{align*}
Thus, we have $v_1(f)=
\frac{C_P}{\sqrt\tau}\|f-\pi(f)\|_\infty,\ 
v_2(f)=
\sqrt2\,\cV_\pi(f)$. With the convention $\log(1/0):=+\infty$, we define
\begin{align*}
\Phi_1(f):=
\int_0^{4v_1(f)}
\log\frac{1}{\mu_1\rbr{B_{d_1}(f,r)}}dr,\ 
\Phi_2(f):=
\int_0^{4v_2(f)}
\left(
\log\frac{1}{\mu_2\rbr{B_{d_2}(f,r)}}
\right)^{1/2}dr.
\end{align*}
Applying Theorem \ref{thm:mixed_tail_majorization} to the diffusion empirical process $\cbr{Z_{\tau}(f)}_{f\in\cF}$, we have the following corollary.
\begin{corollary}[Simultaneous pointwise occupation bound]
\label{thm:diffusion_pointwise_majorization}
Suppose that Assumptions \ref{ass:diffusion_coefficients} and
\ref{ass:diffusion_index_class} hold. Fix $\tau>0$, start the diffusion
from $X_0\sim\pi$, and fix $\mu_1,\mu_2$ before observing the trajectory.
There exists a universal constant $C_{1,2}<\infty$ such that, for every
$\delta\in(0,1)$, with probability at least $1-\delta$, simultaneously
for every $f\in\cF$,
\begin{align}
|Z_\tau(f)|
\le
C_{1,2}
\cbr{
\Phi_1(f)+\Phi_2(f)
+v_1(f)\log\frac{e}{\delta}
+v_2(f)\sqrt{\log\frac{e}{\delta}}
}.
\label{eq:diffusion_pointwise_bound}
\end{align}
If either local complexity term is infinite at $f$, the bound at that
index is understood to be vacuous.
\end{corollary}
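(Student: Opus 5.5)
The corollary follows by applying Theorem~\ref{thm:mixed_tail_majorization} with $(T,\mathcal A)=\cF$ (standard Borel by Assumption~\ref{ass:diffusion_index_class}), anchor $t_0=0$, $\mathbb B=\RR$ with the absolute value, and $(\alpha_1,\alpha_2)=(1,2)$. So I will check each hypothesis of the Section~\ref{sec:mixed_tail-majorization} setup, taking them in the order the setup lists them.

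Proposition~\ref{prop:diffusion_mixed_tail} supplies almost all of this. It states that $d_1,d_2$ are finite-valued pseudo-metrics and that $(\cF,\rho)$, with $\rho=d_1+d_2$, is separable. It states that $(\omega,f)\mapsto Z_\tau(f)(\omega)$ is jointly measurable and that its paths are almost surely $\rho$-continuous. It gives $Z_\tau(0)=0$ almost surely, which is the anchor condition at $t_0=0$. Finally, \eqref{eq:diffusion_mixed_tail} is exactly \eqref{eq:mixed_tail_increment_condition} with $\alpha_1=1$, $\alpha_2=2$. Joint measurability of $d_1,d_2$ is part of Assumption~\ref{ass:diffusion_index_class}.

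The continuity requirement in the theorem's setup is stated loosely with respect to ``$d$''. I will read it as $\rho$-continuity, as in Section~\ref{sec:multi_regime_majorization}; $\rho$-continuity is what the proposition provides. The null set on which anchoring or continuity fails is absorbed into the conditioning on $\Omega_0$, as the paper does throughout.

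Next I match the envelope terms. With $t_0=0$ we have $v_j(f)=d_j(f,0)$, and the two integrals defining $\Phi_1,\Phi_2$ in this section are exactly $\Phi^{(1)}_{\mu_1,d_1}$ and $\Phi^{(2)}_{\mu_2,d_2}$ of the theorem, because $1/\alpha_1=1$ and $1/\alpha_2=1/2$. The theorem's constant $C_{\alpha_1,\alpha_2}$ depends only on the tail orders, so $C_{1,2}$ is universal. In particular it does not depend on $\tau$, the diffusion coefficients, $\cF$, or $\mu_1,\mu_2$; all model dependence sits inside $d_1$ and $d_2$.

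The theorem then gives, for each $\delta\in(0,1)$, a single event of probability at least $1-\delta$ on which \eqref{eq:diffusion_pointwise_bound} holds for every $f\in\cF$. When $\Phi_1(f)$ or $\Phi_2(f)$ is infinite, the bound at $f$ is trivially true, which is the stated convention. There is no real obstacle here: all the analytic work (the Poincar\'e-based Bernstein bound and the measurability and continuity facts) is contained in Proposition~\ref{prop:diffusion_mixed_tail}, which the paper proves in Appendix~\ref{app:proofs_diffusion}.
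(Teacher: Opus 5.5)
Your proposal is correct and matches the paper's proof: the paper also applies Proposition~\ref{prop:diffusion_mixed_tail} to verify the hypotheses of Theorem~\ref{thm:mixed_tail_majorization} with $(\alpha_1,\alpha_2)=(1,2)$ and anchor $0$, identifies $v_j,\Phi_j$ with the theorem's anchor radii and ball-mass functionals, and substitutes. Your extra checks are consistent with what the paper uses implicitly: the standard Borel structure, reading the continuity hypothesis as $\rho$-continuity, and the universality of $C_{1,2}$.
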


The terms $\Phi_1(f)$ and $\Phi_2(f)$ measure the local complexity of the
observable class in the uniform-amplitude and long-run
variance geometries, respectively. The last two terms retain the fixed-observable
Bernstein scales $\frac{C_P}{\sqrt\tau}
\|f-\pi(f)\|_\infty\log\frac{e}{\delta}
+
\sqrt2\,\cV_\pi(f)
\sqrt{\log\frac{e}{\delta}}$. Thus, the theorem does not replace the two scales of an individual
observable with the worst-case diameters of $\cF$.

Finally, notice that $|\widehat\pi_\tau(f)-\pi(f)|
=
\frac{|Z_\tau(f)|}{\sqrt\tau}$. Hence, Inequality \eqref{eq:diffusion_pointwise_bound} also provides a simultaneous bound
for all ergodic averages. Because the event is common to every $f$, it may
be evaluated at a measurable trajectory-dependent choice
$\widehat f\in\cF$ without another union bound; only the reference measures
must have been fixed in advance. All proofs are given in Appendix
\ref{app:proofs_diffusion}.

\section{Pointwise Envelopes for Decoupled Gaussian Chaos Processes}
\label{sec:higher_order_gaussian_chaos}

In this section, we provide our second application of the multi-regime
pointwise majorization theorem. We apply Theorem
\ref{thm:multi_regime_majorization} to a tensor-indexed family of
homogeneous Gaussian polynomials and derive a simultaneous pointwise
envelope for the associated decoupled Gaussian chaos process.

decoupled Gaussian chaoses arise naturally in the study of Gaussian
polynomials, multiple stochastic integrals, canonical $U$-statistics, and
random tensors
\citep{delapena1999decoupling,major2014multiple,bamberger2022hanson}.
Unlike a Gaussian process, a chaos of order larger than one is generally
not Gaussian. Its deviations are governed by several deterministic tensor
norms, rather than by a single variance metric. The moment estimates of
\citet{latala2006estimates} identify these norms through the partitions of
the coordinate set. This finite family of tail regimes makes higher-order
Gaussian chaos a natural application of Theorem
\ref{thm:multi_regime_majorization}.

Fix an integer $q\ge2$ and positive integers
$n_1,\ldots,n_q$. For every $r\in\cbr{1,\ldots,q}$, let
\begin{align*}
g^{(r)}
=
\rbr{g^{(r)}_1,\ldots,g^{(r)}_{n_r}}^\top
\sim N(0,I_{n_r}),
\end{align*}
and assume that $g^{(1)},\ldots,g^{(q)}$ are independent. We write
\begin{align*}
 [n]
 :=
 \cbr{1,\ldots,n}
 \quad\text{for every positive integer }n,
 \qquad
\mathcal I
:=
[n_1]\times\cdots\times[n_q].
\end{align*}
For a multi-index
$\boldsymbol i=(i_1,\ldots,i_q)\in\mathcal I$ and a nonempty subset
$I\subseteq[q]$, we write $\boldsymbol i_I
:=
(i_r)_{r\in I}$.

We let $(T,\mathcal A)$ be a standard Borel space, and let $t
\mapsto A_t=\rbr{a_{\boldsymbol i}(t)}_{\boldsymbol i\in\mathcal I}$ be a Borel measurable map from $T$ to the finite-dimensional tensor
space $\RR^{n_1\times\cdots\times n_q}$. Following the setting in Section \ref{sec:multi_regime_majorization}, we assume that there exists an
anchor $t_0\in T$ such that $A_{t_0}=0$.

\begin{definition}
\label{def:higher_order_gaussian_chaos}
The \emph{decoupled Gaussian chaos process of order $q$} indexed by
$\cbr{A_t}_{t\in T}$ is defined by
\begin{align}
C_t^{(q)}
:=
\sum_{\boldsymbol i\in\mathcal I}
a_{\boldsymbol i}(t)
\prod_{r=1}^qg^{(r)}_{i_r},
\ t\in T.
\label{eq:higher_order_gaussian_chaos_definition}
\end{align}
\end{definition}

Since the Gaussian vectors are independent and centered, we have
$\EE[C_t^{(q)}]=0,
\ t\in T$. Moreover, $A_{t_0}=0$ implies that
$C_{t_0}^{(q)}=0$ almost surely.

Now, we define the pseudo-metrics that we use for higher order Gaussian chaos. The metric that we use is the so-called partition norm introduced by \citet{latala2006estimates}. Let $\mathfrak P_q$ denote the collection of all partitions of $[q]$.
Its cardinality is the $q$th Bell number \citep{bell1938iterated}, denoted by $\mathsf B_q$. Therefore, any partition $\cP$ can be represented by $\mathcal P=\cbr{I_1,\ldots,I_k}\in\mathfrak P_q$ such that
$\cup_{i=1}^kI_i=[q],\ I_j\cap I_l=\varnothing$.

For every nonempty subset
$I\subseteq[q]$, let $\mathcal I_I
:=
\prod_{\ell\in I}[n_\ell]$. Thus, if
$\boldsymbol i=(i_1,\ldots,i_q)\in\mathcal I$, then
$\boldsymbol i_I=(i_\ell)_{\ell\in I}$ belongs to $\mathcal I_I$.

For every block $I_r$, the corresponding test array is $x^{(r)}
=
\rbr{x^{(r)}_{\boldsymbol j}}_{\boldsymbol j\in\mathcal I_{I_r}}
\in
\RR^{\mathcal I_{I_r}}$.

In particular, $x^{(r)}$ has
$\prod_{\ell\in I_r}n_\ell$ coordinates, indexed by the tuples
$\boldsymbol j=(j_\ell)_{\ell\in I_r}$. Since the blocks
$I_1,\ldots,I_k$ are pairwise disjoint and have union $[q]$, every full
multi-index $\boldsymbol i\in\mathcal I$ determines exactly one
coordinate $x^{(r)}_{\boldsymbol i_{I_r}}$ in each test array.

For a tensor
$A=(a_{\boldsymbol i})_{\boldsymbol i\in\mathcal I}
\in\RR^{n_1\times\cdots\times n_q}$, for any partition $\cP=\cbr{I_1,\cdots,I_k}$, we define
\begin{align}
\|A\|_{\mathcal P}
:=
\sup\cbr{
\sum_{\boldsymbol i\in\mathcal I}
a_{\boldsymbol i}
\prod_{r=1}^k
x^{(r)}_{\boldsymbol i_{I_r}}
:
\sum_{\boldsymbol j\in\mathcal I_{I_r}}
\rbr{x^{(r)}_{\boldsymbol j}}^2
\le1,
\ 
r\in\cbr{1,\ldots,k}
}.
\label{eq:gaussian_chaos_partition_norm}
\end{align}
The value in \eqref{eq:gaussian_chaos_partition_norm} does not depend
on the ordering of the blocks of $\mathcal P$. If the partition has one
block, then $
\|A\|_{\cbr{[q]}}
=
\left(
\sum_{\boldsymbol i\in\mathcal I}a_{\boldsymbol i}^2
\right)^{1/2}$, which is the Frobenius norm of the tensor. At the opposite extreme, the
partition into $q$ singleton blocks gives the injective norm of the
associated $q$-linear form.

Let $L_q$ be the constant in Lemma
\ref{lem:tool_latala_gaussian_chaos_moments}, and set
$K_q
=
e\,2^qL_q$. For every $\mathcal P\in\mathfrak P_q$, define
\begin{align}
d_{\mathcal P}(s,t)
:=
K_q\|A_s-A_t\|_{\mathcal P},
\ 
\alpha_{\mathcal P}
:=
\frac{2}{|\mathcal P|},
\label{eq:gaussian_chaos_regime_metrics}
\end{align}
and put $\rho(s,t)
:=
\sum_{\mathcal P\in\mathfrak P_q}d_{\mathcal P}(s,t)$.

The moment estimate of \citet{latala2006estimates} gives the following
multi-regime increment property.

\begin{proposition}
\label{prop:higher_order_gaussian_chaos_multi_regime}
The maps
$\cbr{d_{\mathcal P}}_{\mathcal P\in\mathfrak P_q}$ are
finite-valued, jointly Borel pseudo-metrics, and $(T,\rho)$ is
separable. For every $s,t\in T$ and every $u\ge0$,
\begin{align}
\PP\cbr{
\left|C_s^{(q)}-C_t^{(q)}\right|
>
\sum_{\mathcal P\in\mathfrak P_q}
u^{|\mathcal P|/2}d_{\mathcal P}(s,t)
}
\le
2e^{-u}.
\label{eq:higher_order_gaussian_chaos_increment}
\end{align}
The process $(C_t^{(q)})_{t\in T}$ is jointly measurable and has
almost surely $\rho$-continuous sample paths. Consequently, it is an
anchored $\mathsf B_q$-regime mixed-tail process with orders
$\alpha_{\mathcal P}
=
\frac{2}{|\mathcal P|},
\ 
\mathcal P\in\mathfrak P_q$.
\end{proposition}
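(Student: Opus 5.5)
The plan has three parts: the deterministic claims, the tail bound, and the regularity claims. The tail bound reduces to a single moment estimate, and the constant $K_q=e2^qL_q$ is chosen to absorb the losses in that reduction.

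\emph{Deterministic part.} Each $\|\cdot\|_{\mathcal P}$ is a supremum of linear functionals of $A$. It is therefore a seminorm, and it is finite because it is bounded by the Frobenius norm: for unit test arrays, Cauchy--Schwarz applied block by block gives $\prod_r x^{(r)}_{\boldsymbol i_{I_r}}$ of $\ell^2$-norm at most $1$ over $\mathcal I$. Hence each $d_{\mathcal P}$ is a finite pseudo-metric. Seminorms on the finite-dimensional space $\RR^{n_1\times\cdots\times n_q}$ are continuous, so $d_{\mathcal P}(s,t)=K_q\|A_s-A_t\|_{\mathcal P}$ is jointly Borel as a continuous function composed with the Borel map $(s,t)\mapsto A_s-A_t$. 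For separability, $\rho(s,t)\le K_q\mathsf B_q\|A_s-A_t\|_{\{[q]\}}$. The image $\{A_t\}$ is a subset of a separable metric space, so it has a countable dense subset, and pulling this back gives a countable $\rho$-dense subset of $T$.

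\emph{Tail bound.} Write $C_s^{(q)}-C_t^{(q)}$ as the chaos with coefficient tensor $A=A_s-A_t$, by linearity. Lata{\l}a's estimate (Lemma~\ref{lem:tool_latala_gaussian_chaos_moments}) gives, for $p\ge 2$ (or $p\ge1$ as stated there),
\[
\|C\|_{L^p}\le L_q\sum_{\mathcal P\in\mathfrak P_q}p^{|\mathcal P|/2}\|A\|_{\mathcal P}.
\]
Fix $u\ge0$. If $u\le 1$ (or below whatever small threshold the moment range requires), I will handle it by enlarging constants: $2e^{-u}\ge 2/e$, and Chebyshev with $p=2$ together with the factor $e2^q$ in $K_q$ gives a probability at most $2/e$. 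For larger $u$, I take $p=u$ and apply Markov's inequality:
\[
\PP\Bigl\{|C|>e\,L_q\sum_{\mathcal P}u^{|\mathcal P|/2}\|A\|_{\mathcal P}\Bigr\}\le e^{-u}.
\]
Since $K_q\ge eL_q$, the threshold in \eqref{eq:higher_order_gaussian_chaos_increment} dominates this one, which gives the bound $2e^{-u}$. The factor $2^q$ gives slack to pass between $p$ and $\lceil p\rceil$, and between $u^{|\mathcal P|/2}$ and $p^{|\mathcal P|/2}$ when $p$ is rounded or restricted to $p\ge2$. Matching the exact moment range in the lemma is the only delicate point, and the factor $e2^q$ is designed for exactly this.

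\emph{Measurability and continuity.} The map $(\omega,t)\mapsto C_t^{(q)}(\omega)=\langle A_t,\,g^{(1)}\otimes\cdots\otimes g^{(q)}(\omega)\rangle$ is a continuous function of the pair (Borel map in $t$, random vector in $\omega$), so it is jointly measurable. For each $\omega$,
\[
|C_s-C_t|\le\|A_s-A_t\|_{\{[q]\}}\prod_r\|g^{(r)}\|_2\le K_q^{-1}\rho(s,t)\prod_r\|g^{(r)}\|_2.
\]
Every sample path is therefore $\rho$-Lipschitz with a finite random constant, and hence continuous. Together with $A_{t_0}=0$, this shows the process is anchored, and all hypotheses of Theorem~\ref{thm:multi_regime_majorization} hold with $m=\mathsf B_q$ and $\alpha_{\mathcal P}=2/|\mathcal P|$, since $u^{1/\alpha_{\mathcal P}}=u^{|\mathcal P|/2}$.

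The main obstacle is the moment-to-tail conversion in the second part: the range of $p$, the small-$u$ regime, and the bookkeeping of constants needed to land exactly on $2e^{-u}$.
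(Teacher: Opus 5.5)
Your proposal follows the paper's proof: Frobenius domination for the deterministic claims, Lata{\l}a's moment bound plus Markov's inequality at $p\approx u$ for the tail, and the Cauchy--Schwarz Lipschitz bound for continuity.

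One step is wrong as written. Chebyshev cannot give probability at most $2/e$ for all $u\le 1$, because the threshold $\sum_{\mathcal P}u^{|\mathcal P|/2}d_{\mathcal P}(s,t)$ tends to $0$ as $u\to0$, so the Chebyshev bound blows up. No argument is needed in that range, since $2e^{-u}\ge 1$ for $u\le\log 2$. Note also that ``enlarging constants'' is not available here, because $K_q=e2^qL_q$ is fixed by the statement.

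The paper closes the point you flagged as delicate in one line. For $u>\log 2$ it takes $p=\max\{2,u\}$, so that $u\le p\le 4u$. This gives $p^{|\mathcal P|/2}\le 2^{|\mathcal P|}u^{|\mathcal P|/2}\le 2^q u^{|\mathcal P|/2}$, and hence $\|C_s^{(q)}-C_t^{(q)}\|_{L^p}\le e^{-1}\sum_{\mathcal P}u^{|\mathcal P|/2}d_{\mathcal P}(s,t)$. Markov's inequality at level $e\|C_s^{(q)}-C_t^{(q)}\|_{L^p}$ then yields $e^{-p}\le e^{-u}$.
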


For every
$\mathcal P\in\mathfrak P_q$, let $\mu_{\mathcal P}$ be a probability measure
on $T$. We define the pointwise anchor radii
\begin{align}
v_{\mathcal P}(t)
:=
d_{\mathcal P}(t,t_0)
=
K_q\|A_t\|_{\mathcal P}.
\label{eq:gaussian_chaos_anchor_radii}
\end{align}
For $r\ge0$, write $B_{d_{\mathcal P}}(t,r)
:=
\cbr{s\in T:d_{\mathcal P}(s,t)\le r}$.

The pointwise Fernique-Talagrand functional associated with
$\mathcal P$ is
\begin{align}
\Phi_{\mathcal P}(t)
&:=
\int_0^{4v_{\mathcal P}(t)}
\left(
\log\frac{1}{
\mu_{\mathcal P}(B_{d_{\mathcal P}}(t,r))
}
\right)^{\frac{|\mathcal P|}{2}}dr
=
K_q
\int_0^{4\|A_t\|_{\mathcal P}}
\left(
\log\frac{1}{
\mu_{\mathcal P}
\cbr{
s\in T:
\|A_s-A_t\|_{\mathcal P}\le r
}
}
\right)^{|\mathcal P|/2}dr.
\label{eq:gaussian_chaos_pointwise_functionals}
\end{align}

Applying Theorem \ref{thm:multi_regime_majorization} gives the
following pointwise envelope for the higher order Gaussian chaos.

\begin{theorem}
\label{thm:higher_order_gaussian_chaos_pointwise_envelope}
There exists a finite constant $C_q$, depending only on the chaos order
$q$, such that, for every $\delta\in(0,1)$, with probability at least
$1-\delta$, simultaneously for every $t\in T$,
\begin{align}
\left|C_t^{(q)}\right|
\le
C_q
\sum_{\mathcal P\in\mathfrak P_q}
\left[
\Phi_{\mathcal P}(t)
+
v_{\mathcal P}(t)
\left(
\log\frac{e}{\delta}
\right)^{|\mathcal P|/2}
\right].
\label{eq:higher_order_gaussian_chaos_pointwise_envelope}
\end{align}
\end{theorem}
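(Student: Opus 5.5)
The plan is a direct application of Theorem \ref{thm:multi_regime_majorization}, with Proposition \ref{prop:higher_order_gaussian_chaos_multi_regime} supplying every hypothesis. We index the regimes by the partitions $\mathcal P\in\mathfrak P_q$ and enumerate them as $j=1,\ldots,m$ with $m=\mathsf B_q$. For each regime we take the pseudo-metric $d_{\mathcal P}$, the order $\alpha_{\mathcal P}=2/|\mathcal P|$, and the prior $\mu_{\mathcal P}$.

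First we verify the structural setup of Section \ref{sec:multi_regime_majorization}. We work in the Banach space $\mathbb B=\RR$ with $|\cdot|$. The space $(T,\mathcal A)$ is standard Borel by assumption. Proposition \ref{prop:higher_order_gaussian_chaos_multi_regime} gives three further facts:
\begin{itemize}
\item each $d_{\mathcal P}$ is a finite-valued, jointly Borel pseudo-metric;
\item $(T,\rho)$ with $\rho=\sum_{\mathcal P}d_{\mathcal P}$ is separable;
\item $C^{(q)}$ is jointly measurable, with almost surely $\rho$-continuous paths and $C^{(q)}_{t_0}=0$ almost surely (because $A_{t_0}=0$).
\end{itemize}
Thus $C^{(q)}$ is an anchored process in the required sense. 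The increment bound \eqref{eq:higher_order_gaussian_chaos_increment} is exactly \eqref{eq:multi_regime_increment_condition}, since $u^{1/\alpha_{\mathcal P}}=u^{|\mathcal P|/2}$. So $C^{(q)}$ is an $m$-regime mixed-tail process.

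Second, we match the functionals. With $\alpha_{\mathcal P}=2/|\mathcal P|$, the exponent $1/\alpha_j$ in \eqref{eq:multi_regime_pointwise_functional} becomes $|\mathcal P|/2$. Hence $\Phi_j$ coincides with $\Phi_{\mathcal P}$ in \eqref{eq:gaussian_chaos_pointwise_functionals}, and $v_j$ coincides with $v_{\mathcal P}$ in \eqref{eq:gaussian_chaos_anchor_radii}. The confidence factor $(\log(e/\delta))^{1/\alpha_j}$ is $(\log(e/\delta))^{|\mathcal P|/2}$.

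Third, the second expression for $\Phi_{\mathcal P}$ in \eqref{eq:gaussian_chaos_pointwise_functionals} follows from the substitution $r=K_q r'$. This uses $B_{d_{\mathcal P}}(t,K_qr')=\{s:\|A_s-A_t\|_{\mathcal P}\le r'\}$. It is only needed if one wants that form. Theorem \ref{thm:multi_regime_majorization} then yields \eqref{eq:higher_order_gaussian_chaos_pointwise_envelope} on a single event of probability at least $1-\delta$, with constant $C_{\mathsf B_q,(2/|\mathcal P|)_{\mathcal P}}$.

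The only point needing care is that this constant depends only on $q$. It depends on $m=\mathsf B_q$ and on the finite list of orders $\{2/k:1\le k\le q\}$, with multiplicities given by the Stirling numbers. Both are determined by $q$ alone, and do not depend on the $n_r$, on $T$, or on the tensors $A_t$. So we may set $C_q:=C_{\mathsf B_q,\boldsymbol\alpha}$. Indices with an infinite $\Phi_{\mathcal P}(t)$ are covered trivially. No step is a genuine obstacle, since all the analytic work sits in the earlier proposition and theorem.
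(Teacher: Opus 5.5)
Your proposal is correct and matches the paper's proof: the paper likewise uses Proposition~\ref{prop:higher_order_gaussian_chaos_multi_regime} to verify the hypotheses of Theorem~\ref{thm:multi_regime_majorization}, then applies that theorem with $m=\mathsf B_q$ regimes and orders $\alpha_{\mathcal P}=2/|\mathcal P|$. Your extra remarks are correct details that the paper leaves implicit: the resulting constant depends only on $q$, and the substitution $r=K_q r'$ yields the second expression for $\Phi_{\mathcal P}$.
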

For fixed $q$, decoupling inequalities show that the moments and tail probabilities of a diagonal-free coupled Gaussian chaos are controlled by those of its decoupled counterpart, up to constants depending only on $q$ \citep{delapena1994contraction}. Thus,
Theorem \ref{thm:higher_order_gaussian_chaos_pointwise_envelope} yields
an analogous upper envelope for diagonal-free coupled Gaussian chaos.

The proof of Proposition
\ref{prop:higher_order_gaussian_chaos_multi_regime} and Theorem
\ref{thm:higher_order_gaussian_chaos_pointwise_envelope} is deferred to
Appendix \ref{app:proofs_higher_order_gaussian_chaos}.

The functionals $\Phi_{\mathcal P}(t)$ in
\eqref{eq:higher_order_gaussian_chaos_pointwise_envelope} describe the
local multiscale complexity of the tensor family in the different
partition norms. The terms involving $v_{\mathcal P}(t)$ preserve the
deviation scales of the fixed-tensor moment inequality. More precisely,
a partition having $k$ blocks contributes to the confidence scale $\|A_t\|_{\mathcal P}
\left(
\log\frac{e}{\delta}
\right)^{k/2}$. The one-block partition gives the sub-Gaussian regime, whereas the
partition into $q$ singleton blocks gives the order-$2/q$ sub-Weibull
regime. The remaining partitions retain the intermediate tensor
contractions that occur in Lata{\l}a's moment inequality.

We conclude this section by making the second-order case explicit. Suppose
that $q=2$, and write
\begin{align*}
\mathcal P_{\mathrm F}
:=
\cbr{\cbr{1,2}},
\mathcal P_{\mathrm{op}}:=
\cbr{\cbr{1},\cbr{2}}.
\end{align*}
These are the only two elements of $\mathfrak P_2$. Moreover, when $q=2$, the tensor is just matrix and
\begin{align*}
\|A\|_{\mathcal P_{\mathrm F}}&=
\|A\|_{\mathrm F},\ 
\alpha_{\mathcal P_{\mathrm F}}=2,\\
\|A\|_{\mathcal P_{\mathrm{op}}}&=
\|A\|_{\mathrm{op}},\ 
\alpha_{\mathcal P_{\mathrm{op}}}=1.
\end{align*}
Thus, the process in Definition
\ref{def:higher_order_gaussian_chaos} becomes
\begin{align}
C_t^{(2)}
&=
\sum_{i=1}^{n_1}\sum_{j=1}^{n_2}
a_{ij}(t)g_i^{(1)}g_j^{(2)}
=
\rbr{g^{(1)}}^\top A_tg^{(2)},
\ t\in T.
\label{eq:decoupled_quadratic_gaussian_chaos}
\end{align}
This is the decoupled quadratic Gaussian chaos process associated with the
matrix family $\cbr{A_t}_{t\in T}$.

The corresponding pseudo-metrics and anchor radii are
\begin{align*}
d_{\mathcal P_{\mathrm F}}(s,t)
&=
K_2\|A_s-A_t\|_{\mathrm F},\ 
v_{\mathcal P_{\mathrm F}}(t)=
K_2\|A_t\|_{\mathrm F},\\
d_{\mathcal P_{\mathrm{op}}}(s,t)
&=
K_2\|A_s-A_t\|_{\mathrm{op}},\ 
v_{\mathcal P_{\mathrm{op}}}(t)=
K_2\|A_t\|_{\mathrm{op}}.
\end{align*}
For the two probability measures
$\mu_{\mathcal P_{\mathrm F}}$ and
$\mu_{\mathcal P_{\mathrm{op}}}$, the two pointwise
Fernique--Talagrand functionals in
\eqref{eq:gaussian_chaos_pointwise_functionals} reduce to
\begin{align}
\Phi_{\mathcal P_{\mathrm F}}(t)
&=
K_2
\int_0^{4\|A_t\|_{\mathrm F}}
\left(
\log\frac{1}{
\mu_{\mathcal P_{\mathrm F}}
\cbr{
s\in T:
\|A_s-A_t\|_{\mathrm F}\le r
}}
\right)^{1/2}dr,
\label{eq:quadratic_gaussian_chaos_frobenius_functional}\\
\Phi_{\mathcal P_{\mathrm{op}}}(t)
&=
K_2
\int_0^{4\|A_t\|_{\mathrm{op}}}
\log\frac{1}{
\mu_{\mathcal P_{\mathrm{op}}}
\cbr{
s\in T:
\|A_s-A_t\|_{\mathrm{op}}\le r
}}
\,dr.
\label{eq:quadratic_gaussian_chaos_operator_functional}
\end{align}
We have the following corollary.
\begin{corollary}[Decoupled quadratic Gaussian chaos]
\label{cor:quadratic_gaussian_chaos_pointwise_envelope}
There exists a universal constant $C_2<\infty$ such that, for every
$\delta\in(0,1)$, with probability at least $1-\delta$,
simultaneously for every $t\in T$,
\begin{align}
\left|C_t^{(2)}\right|
\le
C_2
\cbr{
\Phi_{\mathcal P_{\mathrm F}}(t)
+
\Phi_{\mathcal P_{\mathrm{op}}}(t)
+
v_{\mathcal P_{\mathrm F}}(t)
\sqrt{\log\frac{e}{\delta}}
+
v_{\mathcal P_{\mathrm{op}}}(t)
\log\frac{e}{\delta}
}.
\label{eq:quadratic_gaussian_chaos_pointwise_envelope}
\end{align}
\end{corollary}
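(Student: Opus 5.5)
The corollary is the case $q=2$ of Theorem~\ref{thm:higher_order_gaussian_chaos_pointwise_envelope}, so the plan is to specialize that theorem and check the identifications.

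\textbf{Step 1: the partitions.} For $q=2$ the Bell number is $\mathsf B_2=2$, and $\mathfrak P_2=\{\mathcal P_{\mathrm F},\mathcal P_{\mathrm{op}}\}$. The one-block partition gives $|\mathcal P_{\mathrm F}|=1$. Its exponent $|\mathcal P|/2$ is $1/2$, so $\alpha_{\mathcal P_{\mathrm F}}=2$. The two-singleton partition gives $|\mathcal P_{\mathrm{op}}|=2$, with exponent $1$ and $\alpha_{\mathcal P_{\mathrm{op}}}=1$.

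\textbf{Step 2: the norms.} I will verify from \eqref{eq:gaussian_chaos_partition_norm} that $\|A\|_{\{\{1,2\}\}}=\|A\|_{\mathrm F}$. This holds by Cauchy--Schwarz duality: the supremum of $\sum_{ij} a_{ij}x_{ij}$ over the unit ball of $\mathbb R^{n_1\times n_2}$ is $\|A\|_{\mathrm F}$. I will also verify that $\|A\|_{\{\{1\},\{2\}\}}=\sup_{\|x\|_2,\|y\|_2\le1}x^\top Ay=\|A\|_{\mathrm{op}}$. Hence $d_{\mathcal P}$, $v_{\mathcal P}$ and $\Phi_{\mathcal P}$ from \eqref{eq:gaussian_chaos_regime_metrics}--\eqref{eq:gaussian_chaos_pointwise_functionals} reduce exactly to \eqref{eq:quadratic_gaussian_chaos_frobenius_functional}--\eqref{eq:quadratic_gaussian_chaos_operator_functional}. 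To pull $K_2$ out of the integral, substitute $r=K_2 r'$ in $\int_0^{4K_2\|A_t\|}(\cdots)\,dr$; the ball $\{d_{\mathcal P}(s,t)\le K_2r'\}$ equals $\{\|A_s-A_t\|_{\mathcal P}\le r'\}$.

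\textbf{Step 3: assemble.} The process $C^{(2)}_t=(g^{(1)})^\top A_t g^{(2)}$ is Definition~\ref{def:higher_order_gaussian_chaos} with $q=2$, and Proposition~\ref{prop:higher_order_gaussian_chaos_multi_regime} supplies the standing hypotheses. Plugging Steps 1 and 2 into \eqref{eq:higher_order_gaussian_chaos_envelope} gives \eqref{eq:quadratic_gaussian_chaos_pointwise_envelope}: the $v_{\mathcal P_{\mathrm F}}$ term carries the exponent $1/2$, the $v_{\mathcal P_{\mathrm{op}}}$ term carries the exponent $1$, and the sum has two terms. The constant $C_q$ for $q=2$ depends on nothing else, so it is a universal constant $C_2$.

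The main obstacle is purely bookkeeping: confirming the two norm identifications and the factor $K_2$ under the change of variables. No new probabilistic input is needed.
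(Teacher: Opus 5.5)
Your proposal is correct and follows the paper's own argument. The paper proves this corollary by setting $q=2$ in Theorem~\ref{thm:higher_order_gaussian_chaos_pointwise_envelope}, and your checks (the two partitions with $|\mathcal P|\in\{1,2\}$, the Frobenius and operator-norm identifications, the exponents $1/2$ and $1$, and the rescaling $r=K_2r'$ that pulls $K_2$ out of $\Phi_{\mathcal P}$) spell out the ``direct algebra'' the paper leaves implicit.
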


The first two terms in
\eqref{eq:quadratic_gaussian_chaos_pointwise_envelope} describe the local
multiscale complexity around $A_t$ in the Frobenius and operator norms.
The last two terms retain the two confidence scales of a fixed Gaussian
bilinear form. They are essentially $\|A_t\|_{\mathrm F}
\sqrt{\log\frac{e}{\delta}}
+
\|A_t\|_{\mathrm{op}}
\log\frac{e}{\delta}$ up to the universal factor $K_2$. This specialized inequality can also be derived by combining Hanson-Wright inequality and Theorem \ref{thm:mixed_tail_majorization}.

\section{Discussion}\label{sec:discussion}
In this paper, we develop the first simultaneous pointwise majorization
theory for Banach-valued stochastic processes with 
finite-metric mixed-tail increments. The single-regime result extends
the finite Gaussian upper-envelope construction of \citet{xu2026} to
arbitrary positive sub-Weibull orders and separable index spaces; in the
Gaussian case, it also removes the auxiliary baseline scales and peeling
logarithms from the previous upper bound. Theorem \ref{thm:multi_regime_majorization}
retains, for every increment regime, a separate pseudo-metric, reference
measure, pointwise ball-mass functional, anchor radius, and tail order.
The proof constructs a measure-generated admissible chain for each
pseudo-metric, keeps its cost at every individual index, and synchronizes
the resulting chains through a nested common refinement. We apply the
theory to the occupation empirical process of a stationary scalar
diffusion, where a Bernstein inequality produces uniform-norm and
long-run-variance scales, and to higher-order decoupled Gaussian chaos,
where set partitions generate finitely many tensor-norm regimes. In both
applications, one event controls the full index class while the
deterministic bound adapts to the observable or tensor being evaluated.

Several directions merit further study. First, the simultaneous nature
of the bounds makes them potentially useful for analyzing empirical risk
minimization and other data-dependent selection procedures. The common
event may be evaluated at a measurable predictor selected from the same
sample without an additional union bound, provided that the index class,
anchor, increment pseudo-metrics, and reference measures are fixed in
advance or are handled through conditioning or sample splitting.
Converting this observation into excess-risk guarantees will require
combining pointwise majorization with appropriate curvature, margin, or
localization properties of the risk. It would be particularly
interesting to determine whether the resulting candidate-dependent
certificates can complement recent model-free risk-evaluation methods
based on refitting and perturbation
\citep{wainwright2025wild,hu2025perturbing,
hu2026interleaved,ni2026upper}.

A second direction concerns the interpretation and selection of the
ambient reference measures. A pointwise ball-mass functional is not a
property of the index class alone: it also depends on the chosen
pseudo-metric, anchor, reference measure, and index. Developing
principled ways to select or optimize the regime-specific measures while
preserving simultaneous validity may lead to useful statistical
complexity notions. Comparisons with local Rademacher complexities
\citep{bartlett2005local,koltchinskii2006local}, VC dimension
\citep{vapnik2013nature}, and spectral or eigendecay conditions
\citep{hu2025contextual} would clarify when pointwise localization yields
sharper learning guarantees.

Finally, it remains to understand the optimality of our
bound. The constant produced by the common-refinement argument depends
explicitly on the number of regimes and their tail orders, and it is
natural to ask for the sharp dependence on these parameters. More
generally, matching lower envelopes cannot follow from an upper
sub-Weibull or mixed-tail increment condition alone, because the same
condition is satisfied by degenerate processes. The Gaussian lower
bounds of \citet{xu2026} provide a natural starting point, but a general
lower-bound theory will require suitable nondegeneracy, small-ball, or
two-sided increment assumptions. Under such conditions, one may ask
whether the regime-specific ball-mass terms, the anchor-dependent
confidence scales, and their simultaneous separation are unavoidable.

\appendix
\section{Mathematical Tools}
\label{app:mathematical_tools}

We record only the external results used directly in the two applications.
The diffusion results are stated directly in the notation of
Section \ref{sec:pointwise_ergodic_diffusion}.

\begin{lemma}[\citet{lamperski2023nonasymptotic}]
\label{lem:tool_lamperski_hanson_wright}
Let $\zeta=(\zeta_1,\ldots,\zeta_n)^\top$ have independent, centered,
real-valued coordinates satisfying
$\max_i\|\zeta_i\|_{\psi_2}\le b$, where
\begin{align*}
\|Y\|_{\psi_2}
:=
\inf\cbr{
r>0:
\EE\exp\rbr{\frac{Y^2}{r^2}}\le2
}.
\end{align*}
For every nonzero deterministic real matrix $A$ and every
$\varepsilon\ge0$,
\begin{align*}
\PP\cbr{
\zeta^\top A\zeta-
\EE\sbr{\zeta^\top A\zeta}>\varepsilon
}
\le
2\exp\cbr{
-\frac1{c_{HW}}
\min\cbr{
\frac{\varepsilon^2}{b^4\|A\|_{\mathrm F}^2},
\frac{\varepsilon}{b^2\|A\|_{\mathrm{op}}}
}
},
\end{align*}
where $c_{HW}=2048$ is a universal constant.
\end{lemma}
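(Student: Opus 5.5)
This is an external result, so the plan is to reproduce the Rudelson--Vershynin argument while tracking every numerical constant to reach $c_{HW}=2048$. Since the tail is invariant under $\zeta\mapsto\zeta/b$ and $A\mapsto b^2A$, I would first normalize to $b=1$. I would then split
\begin{align*}
\zeta^\top A\zeta-\EE\sbr{\zeta^\top A\zeta}
=\sum_{i}a_{ii}\rbr{\zeta_i^2-\EE\zeta_i^2}
+\sum_{i\ne j}a_{ij}\zeta_i\zeta_j
=:S_{\mathrm{diag}}+S_{\mathrm{off}}.
\end{align*}
A union bound then gives
\begin{align*}
\PP\cbr{S_{\mathrm{diag}}+S_{\mathrm{off}}>\varepsilon}
\le\PP\cbr{S_{\mathrm{diag}}>\varepsilon/2}+\PP\cbr{S_{\mathrm{off}}>\varepsilon/2}.
\end{align*}
It therefore suffices to prove a bound of the form $\exp\{-c\min(\varepsilon^2/\|A\|_{\mathrm F}^2,\varepsilon/\|A\|_{\mathrm{op}})\}$ for each piece, with $c$ explicit. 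The final prefactor $2$ comes from adding the two bounds.

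For the diagonal part, the variables $\zeta_i^2-\EE\zeta_i^2$ are independent and centered. They are sub-exponential with $\psi_1$-norm bounded by an absolute multiple of $\|\zeta_i\|_{\psi_2}^2\le1$. Bernstein's inequality in its moment generating function form then applies. It gives $\EE e^{\lambda(\zeta_i^2-\EE\zeta_i^2)}\le e^{C\lambda^2}$ for $|\lambda|\le c$, with explicit $C,c$ obtained from the Taylor series together with $\EE\zeta_i^{2k}\le 2\,k!$, which follows from $\EE e^{\zeta_i^2}\le2$. Chernoff's bound with weights $a_{ii}$, combined with $\sum_i a_{ii}^2\le\|A\|_{\mathrm F}^2$ and $\max_i|a_{ii}|\le\|A\|_{\mathrm{op}}$, produces the required mixed tail.

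For the off-diagonal part, I would bound $\EE e^{\lambda S_{\mathrm{off}}}$ in four steps.
\begin{enumerate}
\item Decouple. Introducing independent Bernoulli selectors $\delta_i$ and using Jensen's inequality gives $\EE e^{\lambda S_{\mathrm{off}}}\le\EE e^{4\lambda\sum_{i,j}\delta_i(1-\delta_j)a_{ij}\zeta_i\zeta_j'}$, where $\zeta'$ is an independent copy of $\zeta$.
\item Condition on $\zeta$. The inner sum is then sub-Gaussian in $\zeta'$, giving a bound $\exp(C\lambda^2\|A_\delta\zeta\|_2^2)$, where $A_\delta$ is the appropriately masked matrix.
\item Replace $\zeta$ by a standard Gaussian vector $g$. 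This uses the Gaussian moment generating function identity $\EE_g e^{\theta\langle g,x\rangle}=e^{\theta^2\|x\|^2/2}$ in reverse, together with sub-Gaussianity of $\zeta$.
\item Diagonalize. Rotation invariance of $g$ reduces the problem to $\EE e^{\lambda^2 C\sum_k s_k^2 g_k^2}$, where $s_k$ are the singular values of $A_\delta$. This quantity is at most $e^{C'\lambda^2\|A\|_{\mathrm F}^2}$ whenever $\lambda\le c'/\|A\|_{\mathrm{op}}$.
\end{enumerate}
Optimizing the Chernoff bound over $\lambda$ in this range yields the two-regime minimum.

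The main obstacle is not the structure of the argument, which is standard, but obtaining the specific numerical constant $2048$. Every step carries a loss: the factor $4$ from decoupling, the $\psi_2$-to-moment generating function conversion constant, the Gaussian comparison, the bound $(1-2x)^{-1/2}\le e^{2x}$ for $x\le1/4$, and the halving of $\varepsilon$. To keep these losses controlled, I would fix at the outset one convention for the sub-Gaussian moment generating function, namely $\EE e^{\theta\zeta_i}\le e^{2\theta^2}$ under $\|\zeta_i\|_{\psi_2}\le1$, and use it throughout. I would then check at the end that the product of the constants is at most $2048$, consistent with \citet{lamperski2023nonasymptotic}. Since the paper uses this lemma only as a tool, an alternative is simply to cite that reference for the explicit constant.
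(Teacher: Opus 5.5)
The paper gives no proof of this lemma. It is quoted from \citet{lamperski2023nonasymptotic} among the external tools, and it is never invoked later, since the chaos application runs through Lata{\l}a's moment bound. So your fallback of citing is exactly what the paper does.

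Your reproduction of the Rudelson--Vershynin argument is structurally sound. However, the only thing this statement adds to the textbook inequality is the value $c_{HW}=2048$. You defer that value to a final check, and with the convention you commit to, the check fails. With $\EE e^{\theta\zeta_i}\le e^{2\theta^2}$ the chain runs as follows:
\begin{enumerate}
\item Decoupling replaces $\lambda$ by $4\lambda$.
\item Conditioning on $\zeta$ gives $\exp(32\lambda^2\|A_\delta\zeta\|_2^2)$.
\item Writing this as $\EE_g\exp(8\lambda\langle g,A_\delta\zeta\rangle)$ and averaging over $\zeta$ gives $\exp(128\lambda^2\|A_\delta^\top g\|_2^2)$.
\item Your bound $(1-2x)^{-1/2}\le e^{2x}$ then yields $\EE e^{\lambda S_{\mathrm{off}}}\le\exp(256\lambda^2\|A\|_{\mathrm F}^2)$ for $0<\lambda\le(16\sqrt2\|A\|_{\mathrm{op}})^{-1}$.
\end{enumerate}
Chernoff at level $\varepsilon/2$ then gives at best $\exp\{-\varepsilon^2/(4096\|A\|_{\mathrm F}^2)\}$ in the Frobenius regime. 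That is twice the claimed constant.

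The repair is to use the sharper, still elementary, convention $\EE e^{\theta\zeta_i}\le e^{\theta^2}$. It holds for every centered $\zeta_i$ with $\EE e^{\zeta_i^2}\le2$:
\begin{itemize}
\item For $|\theta|\le1$: from $e^x\le x+e^{x^2}$ and $\EE\zeta_i=0$, Jensen gives $\EE e^{\theta\zeta_i}\le\EE e^{\theta^2\zeta_i^2}\le 2^{\theta^2}\le e^{\theta^2}$.
\item For $|\theta|>1$: the bound $\theta\zeta_i\le\theta^2/2+\zeta_i^2/2$ gives $\EE e^{\theta\zeta_i}\le\sqrt2\,e^{\theta^2/2}\le e^{\theta^2}$.
\end{itemize}
Your four steps then give $\EE e^{\lambda S_{\mathrm{off}}}\le\exp(64\lambda^2\|A\|_{\mathrm F}^2)$ for $0<\lambda\le(8\sqrt2\|A\|_{\mathrm{op}})^{-1}$. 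Hence
\begin{align*}
\PP\{S_{\mathrm{off}}>\varepsilon/2\}
\le
\exp\Big\{-\min\Big(\frac{\varepsilon^2}{1024\|A\|_{\mathrm F}^2},\frac{\varepsilon}{32\sqrt2\|A\|_{\mathrm{op}}}\Big)\Big\}.
\end{align*}
Your moment bound for the diagonal uses $\EE\zeta_i^{2k}\le k!$, so $\EE e^{\lambda(\zeta_i^2-\EE\zeta_i^2)}\le e^{8\lambda^2}$ for $|\lambda|\le1/4$. It gives the same form with constants $128$ and $16$.

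Summing the two pieces proves the lemma with $1024$ in place of $c_{HW}$. This implies the stated version, because the right-hand side increases with $c_{HW}$. Alternatively, you can keep $e^{2\theta^2}$ but allot $\varepsilon/4$ to the diagonal and $3\varepsilon/4$ to the off-diagonal part. That brings the worst constant down to about $1820$.
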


\begin{lemma}[\citet{loukianov2011spectral}]
\label{lem:tool_loukianov_poincare}
Consider the scalar diffusion in \eqref{eq:diffusion_sde} under
Assumption \ref{ass:diffusion_coefficients}, with invariant density $\pi$
and $\ell=a\pi$. Define
\begin{align*}
F(x):=
\int_{-\infty}^x\pi(y)dy,\ 
\overline F(x):=
\int_x^\infty\pi(y)dy,
\end{align*}
and
$B_+:=
\sup_{x>0}
\overline F(x)
\int_0^x\frac{2}{\ell(y)}dy,\ 
B_-:=
\sup_{x<0}
F(x)
\int_x^0\frac{2}{\ell(y)}dy$.

If $B_+\vee B_-<\infty$, then the stationary diffusion is reversible
with respect to $\pi$, and we have the following Poincare inequality:
\begin{align}
\operatorname{Var}_\pi(h)
\le
\frac{C_P}{2}
\int_{\RR}a(x)|h'(x)|^2\pi(x)dx,
\ 
C_P:=4\max\cbr{B_+,B_-},
\label{eq:tool_loukianov_poincare}
\end{align}
for every locally absolutely continuous $h\in L^2(\pi)$ for which the
right-hand side is finite.
\end{lemma}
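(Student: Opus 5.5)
The plan is to prove the Poincaré inequality directly through Muckenhoupt's weighted Hardy inequality on each half-line, after checking reversibility from the explicit form of the generator. The constant will come out exactly as $C_P=4\max\{B_+,B_-\}$, because the factor $2$ built into $B_\pm$ absorbs the $\tfrac12$ in the Dirichlet form.

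\emph{Reversibility.} The generator of \eqref{eq:diffusion_sde} is $\mathcal L f=\tfrac a2 f''+bf'$. Since $\ell=a\pi=e^{G}/\mathsf Z$ and $G'=2b/a$, we have $\ell'=(2b/a)\ell$. Hence
\begin{align*}
\mathcal L f=\frac{1}{2\pi}\bigl(\ell f'\bigr)' .
\end{align*}
An integration by parts then gives, for $f,g\in C_c^\infty(\RR)$,
\begin{align*}
\int g\,\mathcal L f\,\pi\,dx=-\frac12\int a f'g'\,\pi\,dx,
\end{align*}
which is symmetric in $(f,g)$. To pass from this symmetry on $C_c^\infty$ to reversibility of the stationary diffusion, I would use the scale/speed description of one-dimensional diffusions. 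Under Assumption~\ref{ass:diffusion_coefficients}, with uniformly elliptic Lipschitz coefficients and the drift pointing inward outside $[-A,A]$, both boundaries $\pm\infty$ are natural and non-attracting. This makes the symmetric form closable with a unique Markovian extension, and the corresponding semigroup is the transition semigroup of the strong solution. I would cite this standard part rather than reprove it.

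\emph{Reduction to two half-lines.} Fix $h$ locally absolutely continuous with $\int a|h'|^2\pi<\infty$, and set $g:=h-h(0)$. Since the variance minimizes $c\mapsto\int(h-c)^2\pi$,
\begin{align*}
\operatorname{Var}_\pi(h)\le\int_0^\infty g^2\pi\,dx+\int_{-\infty}^0 g^2\pi\,dx .
\end{align*}
On $(0,\infty)$ we have $g(x)=\int_0^x g'(y)\,dy$. I would apply Muckenhoupt's Hardy inequality, which states that for nonnegative weights $u,v$,
\begin{align*}
\int_0^\infty\Bigl(\int_0^x f\Bigr)^2u(x)\,dx\le 4M\int_0^\infty f^2v\,dx,
\qquad M:=\sup_{x>0}\Bigl(\int_x^\infty u\Bigr)\Bigl(\int_0^x v^{-1}\Bigr).
\end{align*}
Take $u=\pi$ and $v=\ell=a\pi$. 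Then $M=\sup_{x>0}\overline F(x)\int_0^x\ell^{-1}=B_+/2$, so
\begin{align*}
\int_0^\infty g^2\pi\,dx\le 2B_+\int_0^\infty a|h'|^2\pi\,dx .
\end{align*}
The mirror argument on $(-\infty,0)$ uses $\int_{-\infty}^x\pi=F(x)$ and gives the same bound with $B_-$. Adding the two bounds,
\begin{align*}
\operatorname{Var}_\pi(h)\le 2\max\{B_+,B_-\}\int_{\RR}a|h'|^2\pi\,dx=\frac{C_P}{2}\int_{\RR}a|h'|^2\pi\,dx .
\end{align*}

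\emph{Main obstacle.} The main step to justify is the Hardy inequality with the sharp factor $4$. I would prove it with the standard Muckenhoupt argument. Put $V(x)=\int_0^x v^{-1}$ and write
\begin{align*}
\int_0^x f=\int_0^x \bigl(f\,v^{1/2}V^{1/4}\bigr)\bigl(v^{-1/2}V^{-1/4}\bigr).
\end{align*}
Apply Cauchy--Schwarz to this product. Use $\int_0^x v^{-1}V^{-1/2}=2V(x)^{1/2}$. Then exchange the order of integration (Fubini), and bound the tail integral of $uV^{1/2}$ by $2M\,V^{-1/2}$; this last bound follows from $\int_x^\infty u\le M/V(x)$ together with an integration by parts in $V$. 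The two factors of $2$ multiply to give the constant $4$.

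Two secondary technical points remain. First, the finiteness of $B_\pm$ must be verified under Assumption~\ref{ass:diffusion_coefficients}; the theorem only assumes it, but it follows because $\pi$ has exponentially decaying tails while $\ell^{-1}$ grows at most at the matching exponential rate. Second, the argument should not require $g\in L^2(\pi)$ a priori. I would handle this by truncating $h$ or running the argument on $[0,N]$ and letting $N\to\infty$ by monotone convergence.
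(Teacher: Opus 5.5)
Your proposal is correct. It differs from the paper only in that the paper gives no proof here: Lemma~\ref{lem:tool_loukianov_poincare} is imported as an external tool from \citet{loukianov2011spectral}. The only related work the paper does itself is the check in Appendix~\ref{app:proofs_diffusion} that $B_\pm<\infty$ under Assumption~\ref{ass:diffusion_coefficients}.

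You instead prove the Poincar\'e inequality directly. You split at $0$, use $\operatorname{Var}_\pi(h)\le\int(h-h(0))^2\pi$, and apply Muckenhoupt's weighted Hardy inequality on each half-line with $u=\pi$ and $v=\ell$. The bookkeeping is right: $M_+=\sup_{x>0}\overline F(x)\int_0^x\ell^{-1}=B_+/2$, so $4M_+=2B_+\le C_P/2$, and the same holds on $(-\infty,0)$. Your sketch of the Hardy bound does produce the factor $4$. It uses Cauchy--Schwarz against $V^{\pm1/4}$, then Tonelli, then the estimate $\int_y^\infty uV^{1/2}=V(y)^{1/2}U(y)+\int_y^\infty U\,d(V^{1/2})\le 2M\,V(y)^{-1/2}$, where $U(x)=\int_x^\infty u\le M/V(x)$.

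What your route buys is transparency. It shows the whole constant $C_P$ is the universal factor $4$ of the weighted Hardy inequality, and that $B_\pm$ are anchored at $0$ only because that is where you split. What the citation buys is brevity, plus the reversibility claim. You cite reversibility too (symmetry of a one-dimensional diffusion with respect to its speed measure), so your argument reduces the external input to exactly that point.

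Your two closing worries are unnecessary. The Hardy inequality compares nonnegative quantities via Tonelli, so you need no truncation or a priori integrability of $g$. Also, $B_+\vee B_-<\infty$ is a hypothesis of the lemma, not something you must prove. If you do want it from Assumption~\ref{ass:diffusion_coefficients}, note that $\ell^{-1}$ need not grow only exponentially; it is $\propto e^{x^2}$ for $b(x)=-x$, $\sigma\equiv1$. The actual mechanism is $\overline F(x)\le\ell(x)/(2\gamma\underline\sigma^2)$ together with $\ell(x)\int_A^x\ell^{-1}\le 1/(2\gamma)$ for $x\ge A$.
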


\begin{lemma}[\citet{gao2014bernstein}]
\label{lem:tool_gao_bernstein}
Let $X$ be the stationary diffusion in
Section \ref{sec:pointwise_ergodic_diffusion}. Suppose that it is reversible
with respect to $\pi$ and that,  $\exists\ 0<C_P<\infty$ such that
\begin{align}
\operatorname{Var}_\pi(h)
\le
\frac{C_P}{2}
\int_{\RR}a(x)|h'(x)|^2\pi(x)dx
\label{eq:tool_gao_poincare}
\end{align}
for every locally absolutely continuous $h\in L^2(\pi)$ for which the
right-hand side is finite. Let $g$ be bounded and Borel with $\pi(g)=0$.
Then, for every $t,u>0$,
\begin{align}
\PP_\pi\cbr{
\left|
\frac1{\sqrt t}\int_0^t g(X_s)ds
\right|
>
2\sqrt{C_P}\|g\|_{L^2(\pi)}\sqrt u
+
\frac{C_P\|g\|_\infty}{\sqrt t}u
}
\le2e^{-u}.
\label{eq:tool_gao_coarse_two_sided}
\end{align}
If the long-run variance of $g$ equals $\cV_\pi(g)^2$, i.e., 
$\cV_\pi(g)^2
=
\lim_{t\to\infty}
\frac1t
\operatorname{Var}_{\PP_\pi}
\left(
\int_0^t g(X_s)ds
\right)$, then we have that
\begin{align}
\cV_\pi(g)^2
\le
2C_P\|g\|_{L^2(\pi)}^2,
\label{eq:tool_gao_variance_domination}
\end{align}
and
\begin{align}
\PP_\pi\cbr{
\left|
\frac1{\sqrt t}\int_0^t g(X_s)ds
\right|
>
\cV_\pi(g)\sqrt{2u}
+
\frac{C_P\|g\|_\infty}{\sqrt t}u
}
\le2e^{-u}.
\label{eq:tool_gao_two_sided}
\end{align}
The two-sided statements follow by applying the one-sided result of
\citet{gao2014bernstein} to $g$ and $-g$.
\end{lemma}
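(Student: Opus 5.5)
This lemma is quoted from the literature, so the plan is to reduce it to the one-sided Bernstein inequality of \citet{gao2014bernstein} for reversible Markov processes with a spectral gap. The displayed tails then follow by inverting that inequality and using a union bound over $\pm g$. Concretely, the diffusion \eqref{eq:diffusion_sde} has generator $\mathcal L h=\tfrac12 a h''+b h'$. Reversibility with respect to $\pi$ is part of the hypothesis, and it is also supplied under Assumption \ref{ass:diffusion_coefficients} by Lemma \ref{lem:tool_loukianov_poincare}. The Dirichlet form is $\mathcal E(h,h)=\tfrac12\int a|h'|^2\pi\,dx$. The hypothesis \eqref{eq:tool_gao_poincare} therefore says exactly $\operatorname{Var}_\pi(h)\le C_P\,\mathcal E(h,h)$, that is, the spectral gap satisfies $\lambda_1\ge 1/C_P$. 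I would first check this normalization carefully, since every constant in \eqref{eq:tool_gao_coarse_two_sided}--\eqref{eq:tool_gao_two_sided} depends on it.

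Next I would invoke the one-sided inequality in the Gao--Guillin--Wu form. For bounded centered $g$ and $r>0$, it reads $\PP_\pi\{\tfrac1t\int_0^t g(X_s)ds\ge r\}\le\exp\{-t r^2/(2(\sigma^2(g)+M r/\lambda_1))\}$, where $M\ge\|g\|_\infty$ (up to their precise constant convention) and $\sigma^2(g)$ is the asymptotic variance. Two ingredients go into this step. The first is the spectral identity $\sigma^2(g)=2\langle g,(-\mathcal L)^{-1}g\rangle_\pi$, which equals $\lim_t t^{-1}\operatorname{Var}_{\PP_\pi}(\int_0^t g(X_s)ds)$ by stationarity and reversibility. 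The second is the gap bound $\langle g,(-\mathcal L)^{-1}g\rangle_\pi\le\|g\|_{L^2(\pi)}^2/\lambda_1$. Together these give $\sigma^2(g)\le 2C_P\|g\|_{L^2(\pi)}^2$, which is \eqref{eq:tool_gao_variance_domination}.

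To invert the tail, I would set the exponent equal to $u$ and solve the quadratic $t r^2=2u(\sigma^2+C_P\|g\|_\infty r)$. Bounding its positive root by $\sqrt{a+b}\le\sqrt a+\sqrt b$ gives $r\le\sigma\sqrt{2u/t}+C_P\|g\|_\infty u/t$, possibly with an extra factor of $2$ in the second term, depending on the exact form of Gao's exponent. Multiplying by $\sqrt t$ then yields the one-sided version of \eqref{eq:tool_gao_two_sided} with tail $e^{-u}$. Substituting $\sigma\le\sqrt{2C_P}\|g\|_{L^2(\pi)}$ gives the coarse form $2\sqrt{C_P}\|g\|_{L^2(\pi)}\sqrt u$, which is \eqref{eq:tool_gao_coarse_two_sided}. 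Applying the same bound to $-g$, which is also bounded, Borel and centered, and taking a union bound turns $e^{-u}$ into $2e^{-u}$.

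The main obstacle is bookkeeping rather than ideas. The hard step is matching Gao's normalization of the gap, of the asymptotic variance (Gao's $\sigma^2$ versus the paper's $\mathcal V_\pi(g)^2$), and of the linear term ($M/\lambda_1$ versus $2M/\lambda_1$). Only with these matched do the stated constants $\sqrt2$ and $C_P$ come out exactly. I would resolve this by rewriting Gao's theorem in terms of $\lambda_1$ and $\mathcal E$ as defined above. If it turns out to give $2C_P$ in the linear term, I would absorb that factor by enlarging $C_P$, which is harmless because the hypothesis only requires some admissible Poincar\'e constant.
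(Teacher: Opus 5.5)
Your plan has the same structure as the paper's, which does nothing beyond quoting the one-sided inequality of \citet{gao2014bernstein} and symmetrizing over $\pm g$. Three of your steps are correct: the normalization $\lambda_1\ge 1/C_P$ via $\mathcal E(h,h)=\frac12\int a|h'|^2\pi\,dx$, the proof of \eqref{eq:tool_gao_variance_domination} from $\sigma^2(g)=2\langle g,(-\mathcal L)^{-1}g\rangle_\pi\le 2\|g\|_{L^2(\pi)}^2/\lambda_1$, and the passage from the fine bound to \eqref{eq:tool_gao_coarse_two_sided}. The step that fails is the inversion. Write $c:=C_P\|g\|_\infty$. From a tail of the Bernstein form $\exp\{-tr^2/(2(\sigma^2+cr))\}$, the threshold at level $e^{-u}$ is the positive root $cu/t+\sqrt{c^2u^2/t^2+2u\sigma^2/t}$ of $tr^2=2u(\sigma^2+cr)$. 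When $c>0$ this root is strictly larger than $\sigma\sqrt{2u/t}+cu/t$, and $\sqrt{a+b}\le\sqrt a+\sqrt b$ only gives $\sigma\sqrt{2u/t}+2cu/t$. Equivalently, at $r_0=\sigma\sqrt{2u/t}+cu/t$ one computes $tr_0^2-2u(\sigma^2+cr_0)=-c^2u^2/t<0$, so the exponent is below $u$. The factor $2$ is forced by the exponent you wrote down, not by a convention in Gao's paper. That form therefore cannot yield \eqref{eq:tool_gao_two_sided} with the stated constant.

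Enlarging $C_P$ does not repair this. $C_P$ is fixed by the hypothesis and reappears in the conclusion, and Section~\ref{sec:pointwise_ergodic_diffusion} builds $d_1$ from exactly that constant. Renaming only proves the weaker statement with $2C_P$ in the linear term. The missing idea is to invert at the level of the log-Laplace transform. The Feynman--Kac argument under the spectral gap gives $\EE_\pi e^{\lambda\int_0^t g(X_s)ds}\le e^{t\Lambda(\lambda)}$ with $\Lambda(\lambda)=\sup\{\lambda\pi(gh^2)-\mathcal E(h,h):\|h\|_{L^2(\pi)}=1\}$. Split $h=\pi(h)+\tilde h$, and use $\pi(g\tilde h^2)\le c\,\mathcal E(\tilde h,\tilde h)$ together with $\langle g,\tilde h\rangle_\pi\le(\sigma^2(g)/2)^{1/2}\mathcal E(\tilde h,\tilde h)^{1/2}$. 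This yields
\[
\Lambda(\lambda)\le\frac{\sigma^2(g)\lambda^2}{2(1-c\lambda)},\qquad 0\le\lambda<1/c .
\]
This is a sub-gamma bound for $\int_0^tg(X_s)ds$ with variance factor $t\sigma^2(g)$ and scale $c$. Its Cram\'er transform inverts exactly to the threshold $\sigma(g)\sqrt{2tu}+cu$; equivalently, invert $2tr^2/(\sqrt{\sigma^2+2cr}+\sigma)^2$ rather than its lower bound $tr^2/(2(\sigma^2+cr))$. Dividing by $\sqrt t$ gives exactly $\sigma(g)\sqrt{2u}+C_P\|g\|_\infty u/\sqrt t$. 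Your symmetrization and variance substitution then deliver both two-sided displays with the stated constants.
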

\begin{lemma}[\citet{vandervaart2005donsker}]
\label{lem:tool_vdvz_scalar_clt}
Let $X$ be the stationary scalar diffusion considered in
Section \ref{sec:pointwise_ergodic_diffusion}. Let $f$ be a bounded
Borel function such that
\begin{align*}
\cV_\pi(f)^2
=
4\int_{\RR}\frac{R_f(x)^2}{\ell(x)}\,dx
<\infty.
\end{align*}
Then we have that
\begin{align*}
\frac{1}{\sqrt t}
\int_0^t\overline f(X_s)\,ds
\Longrightarrow
\mathcal N\rbr{0,\cV_\pi(f)^2},
\ t\to\infty.
\end{align*}
\end{lemma}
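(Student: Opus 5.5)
The natural route is a martingale approximation built from the Poisson equation of the generator. The generator of \eqref{eq:diffusion_sde} is $Lg=\tfrac12 a g''+bg'$. Since $\ell=a\pi=e^{G}/\mathsf Z$ and $\ell'=(2b/a)\ell$, it has the divergence form
\begin{align*}
Lg=\frac{1}{2\pi}\rbr{\ell g'}'.
\end{align*}
I will solve $Lg=-\overline f$. Because $R_f'=-\overline f\pi$, it suffices to set
\begin{align*}
\ell g'=2R_f,\qquad\text{that is,}\qquad g(x):=\int_0^x\frac{2R_f(y)}{\ell(y)}dy.
\end{align*}
Since $f$ is bounded, $R_f$ is bounded and Lipschitz. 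Since $\ell$ is $C^1$ and strictly positive, $g'$ is locally Lipschitz. Hence $g\in W^{2,\infty}_{\mathrm{loc}}$ and $Lg=-\overline f$ holds almost everywhere.

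The next step applies Itô's formula to $g(X_t)$. Because $g$ is only $W^{2,\infty}_{\mathrm{loc}}$ rather than $C^2$, I need the Itô--Krylov extension. This is legitimate here: $\sigma$ is bounded below by $\underline\sigma$, so by the occupation-time formula the law of $X_s$ spends zero time on Lebesgue-null sets, and the a.e. identity $Lg=-\overline f$ suffices. The result is
\begin{align*}
\frac1{\sqrt t}\int_0^t\overline f(X_s)ds
=\frac{g(X_0)-g(X_t)}{\sqrt t}
+\frac1{\sqrt t}\int_0^t g'(X_s)\sigma(X_s)dW_s .
\end{align*}
Stationarity handles the boundary term. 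The variable $g(X_t)$ has the same law as $g(X_0)$, which is finite almost surely, so $(g(X_0)-g(X_t))/\sqrt t\to0$ in probability. No integrability of $g$ under $\pi$ is needed.

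For the martingale $M_t:=\int_0^t g'\sigma(X_s)dW_s$, I will compute the quadratic variation, $\langle M\rangle_t=\int_0^t a(X_s)g'(X_s)^2ds$, and compare it with
\begin{align*}
\int_{\RR}a g'^2\pi\,dx=\int_{\RR}\frac{4R_f^2}{\ell^2}\,\ell\,dx=\cV_\pi(f)^2<\infty .
\end{align*}
The drift condition in Assumption \ref{ass:diffusion_coefficients} gives positive recurrence and ergodicity of the stationary diffusion. The ergodic theorem then yields $\langle M\rangle_t/t\to\cV_\pi(f)^2$ almost surely.

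The martingale central limit theorem for continuous local martingales then applies. By Dambis--Dubins--Schwarz, $M_t=\beta_{\langle M\rangle_t}$ for a Brownian motion $\beta$, so $M_t/\sqrt t\Rightarrow\mathcal N(0,\cV_\pi(f)^2)$. Slutsky's lemma combines this with the vanishing boundary term to give the stated convergence.

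The main obstacle is the regularity bookkeeping. I have to justify the generalized Itô formula for $g\in W^{2,\infty}_{\mathrm{loc}}$ that may grow at infinity. I would handle this by localizing with exit times $\tau_N$ of $[-N,N]$, applying Itô--Krylov up to $t\wedge\tau_N$, and letting $N\to\infty$ using non-explosion of the strong solution. The finiteness hypothesis $\cV_\pi(f)^2<\infty$ is exactly what makes $M$ square-integrable under $\PP_\pi$ and the ergodic limit finite.
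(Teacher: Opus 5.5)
The paper gives no proof of this lemma. It imports it as a black box from \citet{vandervaart2005donsker}, where the scalar CLT holds for general one-dimensional diffusions with finite speed measure, with no lower bound on $\sigma$ and no differentiability of the scale density.

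Your proposal is instead a correct, self-contained proof tailored to Assumption~\ref{ass:diffusion_coefficients}. The following steps all check out:
\begin{itemize}
\item the divergence form $Lg=(\ell g')'/(2\pi)$;
\item the Poisson solution $g'=2R_f/\ell$ and the identity $ag'^2\pi=4R_f^2/\ell$;
\item the generalized It\^o formula, which is valid because $a\ge\underline\sigma^2$ and the occupation-time formula make the \emph{path} (not the law of $X_s$) spend zero Lebesgue time in null sets;
\item the stationary boundary term, the ergodic theorem, and the continuous-martingale CLT.
\end{itemize}
Only the DDS step deserves one more line. Write $M_t/\sqrt t=\beta^{(t)}_{\langle M\rangle_t/t}$ with the rescaled Brownian motions $\beta^{(t)}_s:=t^{-1/2}\beta_{ts}$, and use the Brownian modulus of continuity to transfer $\langle M\rangle_t/t\to\mathcal V_\pi(f)^2$. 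If $\mathcal V_\pi(f)=0$, then $\mathbb E M_t^2=0$ and there is nothing to prove. The exit-time localization is harmless but unnecessary, since the generalized It\^o formula is pathwise local.

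Your route also buys something the citation does not. By \eqref{eq:diffusion_tail_ratio}, $|R_f|\le\|\overline f\|_\infty\overline F\le\|\overline f\|_\infty\,\ell/(2\gamma\underline\sigma^2)$ on $[A,\infty)$, and symmetrically on $(-\infty,-A]$. Hence $g'$ is bounded, $g$ grows at most linearly, and $g\in L^2(\pi)$ because $\pi$ has exponential tails. Since $\mathbb E_\pi M_t^2=t\,\mathcal V_\pi(f)^2$ exactly, Minkowski's inequality applied to your decomposition gives
\[
\Bigl|\,\bigl\|t^{-1/2}\int_0^t\overline f(X_s)\,ds\bigr\|_{L^2(\mathbb P_\pi)}-\mathcal V_\pi(f)\Bigr|\le\frac{2\|g\|_{L^2(\pi)}}{\sqrt t}.
\]
This is the long-run-variance identity \eqref{eq:diffusion_asymptotic_variance}, with a rate. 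It is obtained without the CLT, the fourth-moment bound, or the uniform-integrability argument in Appendix~\ref{app:proofs_diffusion}. Conversely, the cited result does not rely on the nondegeneracy and smoothness of the coefficients that your It\^o-formula argument exploits.
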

\begin{lemma}[\citet{latala2006estimates}]
\label{lem:tool_latala_gaussian_chaos_moments}
Fix an integer $q\ge2$ and positive integers
$n_1,\ldots,n_q$. Let $g^{(r)}
\sim
\mathcal N(0,I_{n_r}),
\qquad r\in\cbr{1,\ldots,q}$ be independent standard Gaussian vectors. We write
\begin{align*}
[n]
:=
\cbr{1,\ldots,n},
\ 
\mathcal I
:=
[n_1]\times\cdots\times[n_q],
\end{align*}
and let $\mathfrak P_q$ be the collection of all partitions of $[q]$.
For a deterministic tensor
$A=(a_{\boldsymbol i})_{\boldsymbol i\in\mathcal I}$ and a partition
$\mathcal P=\cbr{I_1,\ldots,I_k}\in\mathfrak P_q$, define
\begin{align*}
\|A\|_{\mathcal P}
:=
\sup\cbr{
\sum_{\boldsymbol i\in\mathcal I}
a_{\boldsymbol i}
\prod_{r=1}^k
x^{(r)}_{\boldsymbol i_{I_r}}:
\sum_{\boldsymbol i_{I_r}}
\rbr{x^{(r)}_{\boldsymbol i_{I_r}}}^2
\le1,
\quad r\in\cbr{1,\ldots,k}
}.
\end{align*}
Then there exists a finite constant $L_q$, depending only on $q$, such
that, for every $p\ge2$,
\begin{align}
\left\|
\sum_{\boldsymbol i\in\mathcal I}
a_{\boldsymbol i}
\prod_{r=1}^qg^{(r)}_{i_r}
\right\|_{L^p}
\le
L_q
\sum_{\mathcal P\in\mathfrak P_q}
p^{|\mathcal P|/2}\|A\|_{\mathcal P}.
\label{eq:tool_latala_gaussian_chaos_moments}
\end{align}
\end{lemma}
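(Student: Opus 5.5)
Since this lemma is an external result, the simplest route is to cite the upper half of Theorem~1 of \citet{latala2006estimates}. If a self-contained argument is wanted, I would prove only the upper bound (the lower bound is much harder and not needed), by induction on $q$ and conditioning on one Gaussian vector at a time. I would strengthen the statement to a Hilbert-valued version so that the induction closes. For a tensor $A$ with entries $a_{\boldsymbol i}\in H$, $H$ a Hilbert space, the claim is
\begin{align*}
\Big\|\Big\|\sum_{\boldsymbol i}a_{\boldsymbol i}\prod_{r=1}^q g^{(r)}_{i_r}\Big\|_H\Big\|_{L^p}
\le L_q\sum_{\mathcal P\in\mathfrak P_q}p^{|\mathcal P|/2}\|A\|_{\mathcal P},
\end{align*}
where the $H$-coordinate is always attached to the block containing the "expectation" part. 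Throughout, $\mathcal P$ ranges over partitions consisting of one block together with singletons; the stated real-valued bound then follows a fortiori, since we only drop nonnegative terms.

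The base tool is the Gaussian concentration inequality for norms of Gaussian vectors. For deterministic $v_1,\ldots,v_n\in H$,
\begin{align*}
\Big\|\Big\|\sum_j g_jv_j\Big\|_H\Big\|_{L^p}\le \Big(\sum_j\|v_j\|_H^2\Big)^{1/2}+C\sqrt p\,\sup_{\|x\|_2\le1}\Big\|\sum_jx_jv_j\Big\|_H .
\end{align*}
The first term comes from $\EE\|\cdot\|\le(\EE\|\cdot\|^2)^{1/2}$. The second is the Lipschitz constant of the map $g\mapsto\|\sum_j g_jv_j\|_H$ combined with the Gaussian moment bound. For $q=1$ this already is the claim: the first term is the Frobenius norm of the one-block partition, and the second is the singleton/operator norm times $\sqrt p$.

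For the inductive step I condition on $g^{(1)},\ldots,g^{(q-1)}$ and apply the display with $g=g^{(q)}$ and random vectors
\begin{align*}
v_j=\sum_{\boldsymbol i_{[q-1]}}a_{(\boldsymbol i_{[q-1]},j)}\prod_{r<q}g^{(r)}_{i_r}.
\end{align*}
Taking $L^p$ norms in the outer variables produces two terms.

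The first term is $\|(\sum_j\|v_j\|_H^2)^{1/2}\|_{L^p}$. This is the $L^p$ norm of an $(H\otimes\ell_2)$-valued chaos of order $q-1$, so the induction hypothesis applies directly. In the resulting norms, coordinate $q$ is merged into the block carrying $H$.

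The second term is $\sqrt p\,\|\sup_{\|x\|\le1}\|\sum_jx_jv_j\|_H\|_{L^p}$. This is the main obstacle, because the supremum over $x$ sits inside the moment. What I would try first is to view $\sup_x\|\sum_jx_jv_j\|_H$ as the norm of the order-$(q-1)$ chaos with values in the operator space $\mathcal L(\ell_2^{n_q},H)$ with the operator norm. I would therefore run the induction in the class of Banach spaces of the form "Hilbert space with injective/operator tensor structure," where the expectation term is handled by a symmetrization plus Hilbert-space estimate and the $\sqrt p$ term by the Lipschitz bound, which is valid in any Banach space. The Lipschitz constant of $g^{(r)}\mapsto\|\cdot\|$ is exactly an injective norm, so it yields a new singleton block.

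The expectation term in the non-Hilbert space is where genuine difficulty lies. Here I would bound $\EE\sup_x$ crudely by the Hilbert (Frobenius-type) norm, i.e.\ by a coarser partition norm. This loses only in favor of partitions that already appear on the right-hand side, and gives the upper bound with some $L_q$. I expect that tracking which partitions arise, and verifying that each resulting quantity is dominated by some $\|A\|_{\mathcal P}$ with the matching power $p^{|\mathcal P|/2}$, is bookkeeping. The real work is justifying this domination step and the constant growth $L_q\le C^q L_{q-1}$.
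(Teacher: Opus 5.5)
Your primary suggestion, quoting the upper half of Theorem~1 of Lata{\l}a (2006), is exactly what the paper does: the lemma is listed among the external tools and is not proved. The self-contained sketch you offer as an alternative does not work, however, and its weak points are not bookkeeping.

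First, the strengthened statement in which $\mathcal P$ ranges only over partitions with a single non-singleton block is false for $q\ge4$, so no induction can establish it. Take $q=4$, $n_r=n$ and $a_{ijkl}=\delta_{ik}\delta_{jl}$. The chaos is then $\langle g^{(1)},g^{(3)}\rangle\langle g^{(2)},g^{(4)}\rangle$, a product of two independent Gaussian bilinear forms. Conditionally on $g^{(3)}$, the form $\langle g^{(1)},g^{(3)}\rangle$ is $N(0,\|g^{(3)}\|_2^2)$, so its $L^p$ norm is at least $c\sqrt{pn}$. Hence the chaos has $L^p$ norm at least $c^2pn$. On the other side, $\|A\|_{\{[4]\}}=n$, every partition of type $3+1$ has norm $\sqrt n$, every partition of type $2+1+1$ has norm at most $\sqrt n$, and the four-singleton norm is $1$. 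Your restricted sum is therefore of order $\sqrt p\,n+p^{3/2}\sqrt n+p^2$, which is $o(pn)$ for $1\ll p\ll n$ (e.g.\ $p=\sqrt n$). The missing term is $p\|A\|_{\{\{1,3\},\{2,4\}\}}=pn$. Partitions with several non-singleton blocks are genuinely needed. A recursion that only merges coordinates into the $H$-block or creates singletons cannot produce them.

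Second, this is exactly why the crude treatment of the expectation term fails. Each expectation inherits a factor $\sqrt p$ from every Lipschitz step above it, so Frobenius-type bounds come with the wrong power of $p$. Already for $q=3$, apply your Gaussian bound first in $g^{(3)}$ (given $g^{(1)},g^{(2)}$) and then in $g^{(2)}$ (given $g^{(1)}$). This leaves the term $p\,\mathbb{E}\|\sum_i g^{(1)}_iA_i\|_{\mathrm{op}}$ with $A_i=(a_{ijk})_{j,k}$. Bounding that expectation by $\|A\|_{\mathrm F}$ gives $p\|A\|_{\{[3]\}}$, whereas the lemma allows only $p^{1/2}\|A\|_{\{[3]\}}$. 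For $a_{ijk}=\delta_{ij}\delta_{jk}$ this is $p\sqrt n$ against a right-hand side of order $\sqrt{pn}+p^{3/2}$.

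What is actually required is a dimension-free estimate such as
$\mathbb{E}\|\sum_ig_iA_i\|_{\mathrm{op}}\lesssim\|A\|_{\{\{1,2\},\{3\}\}}+\|A\|_{\{\{1,3\},\{2\}\}}+p^{-1/2}\|A\|_{\{[3]\}}+p^{1/2}\|A\|_{\{\{1\},\{2\},\{3\}\}}$,
together with its analogues for expected partition-type norms of higher-order chaoses. Noncommutative Khintchine gives the first two terms only with an extra $\sqrt{\log n}$. Establishing these estimates is the main content of Lata{\l}a's proof, which proceeds through entropy (covering-number) arguments. So citing the theorem, as the paper does, is the right route, and the sketch should not be presented as a proof.
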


Lemma \ref{lem:tool_latala_gaussian_chaos_moments} is the upper half of
the two-sided moment estimate in Theorem~1 of
\citet{latala2006estimates}. Only this upper estimate is used below.

\section{Proofs in Section \ref{sec:mixed_tail-majorization}}\label{app:proofs_mixed_tail}

Throughout this appendix, we use the convention $\log\frac{1}{0}:=+\infty$. Changing the process on a null event does not affect any of the claims
below. We therefore redefine
\begin{align*}
Z_t(\omega):=0,\ \omega \notin\Omega_0,t\in T,
\end{align*}
Thus, every sample path used below is
$\rho$-continuous and satisfies $Z_{t_0}=0$.

We first verify the measurability of the pointwise radii and profiles.

\begin{lemma}
\label{lem:mixed_tail_measurability}
For every $j\in\cbr{1,2}$, the map
\begin{align*}
(t,r)\longmapsto\mu_j(B_{d_j}(t,r))
\end{align*}
is Borel measurable on $T\times[0,\infty)$. Consequently,
$v_j$ and $\Phi_j$ are $[0,\infty]$-valued Borel functions on $T$.
\end{lemma}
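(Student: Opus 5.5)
The plan is to reduce everything to the joint measurability of $(t,s,r)\mapsto \mathbf 1\{d_j(s,t)\le r\}$ and then integrate this indicator against $\mu_j$ in $s$, using Fubini--Tonelli (in its measurability form for kernels).

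First, since $d_j$ is jointly $\mathcal A\otimes\mathcal A$-measurable, the map $(t,s,r)\mapsto d_j(s,t)-r$ is measurable on $T\times T\times[0,\infty)$ with respect to $\mathcal A\otimes\mathcal A\otimes\mathcal B([0,\infty))$. It follows that the set
\begin{align*}
E_j:=\cbr{(t,s,r): d_j(s,t)\le r}
\end{align*}
belongs to the product $\sigma$-algebra.

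Next, apply Tonelli's theorem to the nonnegative product-measurable function $\mathbf 1_{E_j}$. Integrating out the $s$-variable against the finite measure $\mu_j$ gives a function
\begin{align*}
(t,r)\mapsto\int_T\mathbf 1_{E_j}(t,s,r)\,\mu_j(ds)=\mu_j(B_{d_j}(t,r)),
\end{align*}
which is $\mathcal A\otimes\mathcal B([0,\infty))$-measurable. This is the standard statement that sections-integrals of product-measurable functions are measurable; alternatively, one can run a monotone class argument starting from rectangles.

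Then $v_j(t)=d_j(t,t_0)$ is Borel, since it is the section of a jointly measurable map at the fixed point $t_0$.

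For $\Phi_j$, consider the integrand
\begin{align*}
g_j(t,r):=\Big(\log\tfrac{1}{\mu_j(B_{d_j}(t,r))}\Big)^{1/\alpha_j}\mathbf 1\{r<4v_j(t)\}.
\end{align*}
It is built as a composition of the measurable map $(t,r)\mapsto\mu_j(B_{d_j}(t,r))$ with the monotone, hence Borel, map $x\mapsto(\log(1/x))^{1/\alpha_j}$ from $[0,1]$ to $[0,\infty]$ (with $\log(1/0)=+\infty$). It is then multiplied by the indicator of the measurable set $\{(t,r): r<4v_j(t)\}$. So $g_j$ is a $[0,\infty]$-valued jointly measurable function.

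Tonelli again, now applied to Lebesgue measure in $r$, shows that $t\mapsto\Phi_j(t)=\int_0^\infty g_j(t,r)\,dr$ is $[0,\infty]$-valued and measurable. The extended-real values cause no difficulty for Tonelli.

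The only delicate point is the bookkeeping around the extended-real convention and the endpoint of the integral, whether one uses $r<4v_j(t)$ or $r\le 4v_j(t)$. A single point has Lebesgue measure zero, so the choice does not affect the integral. The main potential obstacle, in principle, is measurability of $E_j$. Joint measurability of $d_j$ is assumed, so no separability argument is needed; should one prefer, measurability could also be derived from $\rho$-separability and continuity of $d_j$ with respect to $\rho$.
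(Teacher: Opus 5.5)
Your proposal is correct and follows the paper's proof essentially step for step. It uses joint measurability of the indicator of $\{d_j(s,t)\le r\}$, then measurability of the parameterized integral against $\mu_j$ for the ball masses, Borel measurability of $v_j$ as a section at $t_0$, and a second Tonelli application to the truncated integrand for $\Phi_j$. The only difference is cosmetic: the paper defines the truncated integrand piecewise to avoid the product $0\cdot\infty$, whereas you multiply by an indicator (with $r<4v_j(t)$ rather than $r\le 4v_j(t)$) and rely on the standard convention $0\cdot\infty=0$, which is equally valid.
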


\begin{proof}[Proof of Lemma \ref{lem:mixed_tail_measurability}]
Fix $j\in\cbr{1,2}$. Since $d_j$ is jointly
$\mathcal A\otimes\mathcal A$-measurable, the map
\begin{align*}
(t,r,s)
\longmapsto
\mathbf 1_{\cbr{d_j(t,s)\le r}}
\end{align*}
is measurable from $\rbr{
T\times[0,\infty)\times T,
\mathcal A\otimes\mathcal B([0,\infty))\otimes\mathcal A
}$ to $\rbr{\RR,\mathcal B(\RR)}$. Therefore, by measurability of
parameterized integrals, the map $(t,r)
\mapsto
\int_T
\mathbf 1_{\cbr{d_j(t,s)\le r}}\mu_j(ds)=
\mu_j(B_{d_j}(t,r))$ is Borel measurable.
Moreover, since $t_0$ is fixed and $d_j$ is jointly Borel, the map $v_j(t)=d_j(t,t_0)$ is Borel measurable. Again, we define
\begin{align*}
Q_j(t,r)
:=
\begin{cases}
\displaystyle
\left(
\log\frac{1}{\mu_j(B_{d_j}(t,r))}
\right)^{1/\alpha_j},
&0\le r\le4v_j(t),\\[1.2ex]
0,
&r>4v_j(t).
\end{cases}
\end{align*}
The preceding measurability statements show that $Q_j$ is a
nonnegative, extended-real-valued Borel function. Notice that the
piecewise definition avoids the ambiguous product $0\cdot\infty$ when a
ball has zero $\mu_j$-mass. By the definition of $\Phi_j$, we have that $\Phi_j(t)
=
\int_0^\infty Q_j(t,r)dr$.
Another application of measurability of parameterized integrals proves
that $\Phi_j$ is Borel measurable. Since the same argument applies to
$j=1$ and $j=2$, we finish the proof.
\end{proof}

We next transfer each fixed ambient measure to an arbitrary finite
subset. This is the nearest-point construction that permits the
measure-generated chains to be built inside the finite set without
losing the ambient ball masses.

\begin{lemma}
\label{lem:mixed_tail_ambient_restriction}
Let $K\subseteq T$ be finite and contain $t_0$. For
$j\in\cbr{1,2}$, write $d_{j,K}:=d_j|_{K\times K}$.
There exists a probability measure $\widetilde\mu_{j,K}$ on $K$ such
that, for every $t\in K$ and every $r\ge0$,
\begin{align*}
\widetilde\mu_{j,K}(\cbr{t_0})
\ge\frac12,\ 
\widetilde\mu_{j,K}(B_{d_{j,K}}(t,2r))
\ge
\frac12\mu_j(B_{d_j}(t,r)).
\end{align*}
Moreover, if $c_{\alpha_j}^{(0)}
:=
\max\cbr{1,2^{1/\alpha_j-1}}$, then
\begin{align*}
&\int_0^{4v_j(t)}
\left(
\log\frac{1}{
\widetilde\mu_{j,K}(B_{d_{j,K}}(t,r))
}
\right)^{1/\alpha_j}dr\le
2c_{\alpha_j}^{(0)}
\Phi_j(t)
+
4c_{\alpha_j}^{(0)}
(\log2)^{1/\alpha_j}v_j(t).
\end{align*}
\end{lemma}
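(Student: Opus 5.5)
The plan is to build $\widetilde\mu_{j,K}$ as a mixture of a point mass at the anchor and a nearest-point pushforward of $\mu_j$. Enumerate $K=\cbr{k_1,\ldots,k_N}$ and define a Borel map $\pi_K:T\to K$ sending $s$ to the first $k_i$ that minimizes $d_j(s,k_i)$. This map is measurable because each $s\mapsto d_j(s,k_i)$ is Borel and ties are broken by index. Then set
\begin{align*}
\widetilde\mu_{j,K}:=\tfrac12\delta_{t_0}+\tfrac12(\pi_K)_\#\mu_j .
\end{align*}
The first claim, $\widetilde\mu_{j,K}(\cbr{t_0})\ge\frac12$, holds by construction.

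For the ball inequality, fix $t\in K$ and $r\ge0$, and take $s\in B_{d_j}(t,r)$. Since $t\in K$, the nearest-point property gives $d_j(s,\pi_K(s))\le d_j(s,t)\le r$. The triangle inequality then gives $d_j(t,\pi_K(s))\le 2r$. So $\pi_K$ maps $B_{d_j}(t,r)$ into $B_{d_{j,K}}(t,2r)$, and therefore
\begin{align*}
\widetilde\mu_{j,K}(B_{d_{j,K}}(t,2r))\ge\tfrac12\mu_j(\pi_K^{-1}(B_{d_{j,K}}(t,2r)))\ge\tfrac12\mu_j(B_{d_j}(t,r)).
\end{align*}

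For the integral bound, first use monotonicity of $x\mapsto(\log(1/x))^{1/\alpha_j}$. The ball inequality at radius $r/2$ gives
\begin{align*}
\log\frac{1}{\widetilde\mu_{j,K}(B_{d_{j,K}}(t,r))}\le \log 2+\log\frac{1}{\mu_j(B_{d_j}(t,r/2))}.
\end{align*}
Next apply the elementary inequality $(a+b)^{p}\le c^{(0)}_{\alpha_j}(a^p+b^p)$ with $p=1/\alpha_j$; this is subadditivity when $p\le1$ and convexity (with constant $2^{p-1}$) when $p>1$. Integrating over $[0,4v_j(t)]$, the $\log 2$ part contributes $4c^{(0)}_{\alpha_j}(\log 2)^{1/\alpha_j}v_j(t)$. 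For the other part, substitute $r=2r'$:
\begin{align*}
\int_0^{4v_j(t)}\Big(\log\frac1{\mu_j(B_{d_j}(t,r/2))}\Big)^{1/\alpha_j}dr=2\int_0^{2v_j(t)}\Big(\log\frac1{\mu_j(B_{d_j}(t,r'))}\Big)^{1/\alpha_j}dr'\le 2\Phi_j(t),
\end{align*}
since the integrand is nonnegative and $2v_j(t)\le4v_j(t)$. Adding the two parts gives exactly the stated constant.

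The main point needing care is measurability of $\pi_K$ and handling of the extended-real conventions: if $\mu_j(B_{d_j}(t,r/2))=0$ for some $r$ the bound is trivially infinite on the right, and the case $v_j(t)=0$ is trivial. The anchor mass, although unused in the inequalities here, is what will later ensure the measure-generated chain starts at $T_0=\cbr{t_0}$ with bounded cost.
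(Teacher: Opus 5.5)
Your proposal is correct and follows essentially the same route as the paper. It uses the same mixture $\frac12\delta_{t_0}+\frac12(\pi_K)_\#\mu_j$ with an index-tie-broken nearest-point map, the same inclusion $\pi_K(B_{d_j}(t,r))\subseteq B_{d_{j,K}}(t,2r)$, and the same combination of the $\log 2$ shift, the inequality $(x+y)^p\le c^{(0)}_{\alpha_j}(x^p+y^p)$, and the substitution $r=2r'$ with $2v_j(t)\le 4v_j(t)$. The only difference is the order of steps (the paper changes variables before invoking the ball-mass comparison), and both arrive at exactly $2c^{(0)}_{\alpha_j}\Phi_j(t)+4c^{(0)}_{\alpha_j}(\log2)^{1/\alpha_j}v_j(t)$.
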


\begin{proof}[Proof of Lemma
\ref{lem:mixed_tail_ambient_restriction}]
Fix $j\in\cbr{1,2}$ and enumerate
\begin{align*}
K=\cbr{k_1,\ldots,k_N}.
\end{align*}
For every $s\in T$, let $q_{j,K}(s)$ be a point in $K$ minimizing
$d_j(s,\cdot)$, and break ties by choosing the point with the smallest
index. More explicitly, define
\begin{align*}
C_i
:={}&
\bigcap_{m<i}
\cbr{s\in T:d_j(s,k_i)<d_j(s,k_m)}
\cap
\bigcap_{m>i}
\cbr{s\in T:d_j(s,k_i)\le d_j(s,k_m)}.
\end{align*}
Every set $C_i$ is Borel measurable because $d_j$ is jointly Borel.
The sets $C_1,\ldots,C_N$ form a partition of $T$, and
$q_{j,K}(s)=k_i$ for $s\in C_i$. Hence, $q_{j,K}:T\to K$ is Borel
measurable and satisfies
\begin{align*}
d_j(s,q_{j,K}(s))
=
d_j(s,K)
:=
\min_{k\in K}d_j(s,k).
\end{align*}

Define
\begin{align*}
\widetilde\mu_{j,K}
:=
\frac12\delta_{t_0}
+
\frac12(q_{j,K})_\sharp\mu_j,
\end{align*}
where the pushforward measure is defined by
\begin{align*}
(q_{j,K})_\sharp\mu_j(A)
:=
\mu_j(q_{j,K}^{-1}(A)),
\ A\subseteq K.
\end{align*}
By construction, we have
\begin{align*}
\widetilde\mu_{j,K}(\cbr{t_0})\ge\frac12.
\end{align*}

We now prove the ball-mass comparison. Fix $t\in K$ and $r\ge0$.
If $s\in B_{d_j}(t,r)$, then $t\in K$ and the nearest-point property
give
\begin{align*}
d_j(s,q_{j,K}(s))
\le
d_j(s,t)
\le r.
\end{align*}
By the triangle inequality, we have
\begin{align*}
d_j(t,q_{j,K}(s))
&\le
d_j(t,s)+d_j(s,q_{j,K}(s))\le2r.
\end{align*}
Therefore, we get $q_{j,K}(B_{d_j}(t,r))
\subseteq
B_{d_{j,K}}(t,2r)$.
It follows that
\begin{align*}
\widetilde\mu_{j,K}(B_{d_{j,K}}(t,2r))\ge
\frac12
(q_{j,K})_\sharp\mu_j(B_{d_{j,K}}(t,2r))=
\frac12
\mu_j(q_{j,K}^{-1}(B_{d_{j,K}}(t,2r)))\ge
\frac12\mu_j(B_{d_j}(t,r)).
\end{align*}

It remains to prove the integral comparison. Recall that
$p_j:=\frac{1}{\alpha_j}$.

By the change of variable $r=2u$, we have
\begin{align*}
&\int_0^{4v_j(t)}
\left(
\log\frac{1}{
\widetilde\mu_{j,K}(B_{d_{j,K}}(t,r))
}
\right)^{p_j}dr=
2\int_0^{2v_j(t)}
\left(
\log\frac{1}{
\widetilde\mu_{j,K}(B_{d_{j,K}}(t,2u))
}
\right)^{p_j}du.
\end{align*}
The ball-mass comparison gives us
\begin{align*}
\log\frac{1}{
\widetilde\mu_{j,K}(B_{d_{j,K}}(t,2u))
}\le
\log\frac{2}{\mu_j(B_{d_j}(t,u))}=
\log\frac{1}{\mu_j(B_{d_j}(t,u))}
+
\log2.
\end{align*}
For nonnegative $x,y$, we have
\begin{align*}
(x+y)^{p_j}
\le
c_{\alpha_j}^{(0)}
\rbr{x^{p_j}+y^{p_j}}.
\end{align*}
Finally, combining these parts together, we obtain
\begin{align*}
\int_0^{4v_j(t)}
\left(
\log\frac{1}{
\widetilde\mu_{j,K}(B_{d_{j,K}}(t,r))
}
\right)^{p_j}dr&\le
2c_{\alpha_j}^{(0)}
\int_0^{2v_j(t)}
\left(
\log\frac{1}{\mu_j(B_{d_j}(t,u))}
\right)^{p_j}du+
2c_{\alpha_j}^{(0)}
\int_0^{2v_j(t)}(\log2)^{p_j}du\\
&\le
2c_{\alpha_j}^{(0)}\Phi_j(t)
+
4c_{\alpha_j}^{(0)}
(\log2)^{p_j}v_j(t),
\end{align*}
where we used $2v_j(t)\le4v_j(t)$ in the final inequality. This proves
the assertion for $j=1,2$. Applying the same construction to
both metrics finishes the proof.
\end{proof}

We now construct, for each metric, an admissible sequence whose
chaining cost remains pointwise rather than being replaced by a
worst-case supremum over $K$.

\begin{lemma}
\label{lem:mixed_tail_pointwise_measure_chain}
Let $K\subseteq T$ be finite and contain $t_0$. For every
$j\in\cbr{1,2}$, there exists an admissible sequence $\mathcal T_j=(T_{j,k})_{k\ge0}$ of subsets of $K$ such that $T_{j,0}=\cbr{t_0}$ and $T_{j,k}=K$ for all sufficiently large $k$. Moreover, for every
$t\in K$,
\begin{align*}
G_{j,K}(t):=
\sum_{k\ge0}
2^{k/\alpha_j}d_j(t,T_{j,k})\le
C_{\alpha_j}^{(1)}
\rbr{\Phi_j(t)+v_j(t)},
\end{align*}
where $C_{\alpha_j}^{(1)}<\infty$ depends only on $\alpha_j$.
\end{lemma}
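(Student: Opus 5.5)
The plan is to work on the finite set $K$ with the transferred measure $\widetilde\mu:=\widetilde\mu_{j,K}$ from Lemma~\ref{lem:mixed_tail_ambient_restriction}. Let $d:=d_{j,K}$ and $\alpha:=\alpha_j$. We build a measure-generated admissible sequence in the spirit of Talagrand's construction of an admissible sequence from a majorizing measure. For $t\in K$ and $k\ge1$, define the pointwise radius
\begin{align*}
r_k(t):=\inf\cbr{r\ge0:\widetilde\mu(B_d(t,r))\ge 2^{-2^{k}}}.
\end{align*}
Since $K$ is finite, the infimum is attained. Because $\widetilde\mu(\cbr{t_0})\ge\tfrac12\ge 2^{-2^k}$, we also get $r_k(t)\le d(t,t_0)=v_j(t)$ for every $k\ge1$.

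To define $T_{j,k}$, take $T_{j,0}=\cbr{t_0}$. For $k\ge1$, choose points greedily: repeatedly pick an unselected $t$ of smallest $r_k(t)$ whose ball $B_d(t,r_k(t))$ is disjoint from the balls already chosen. Each chosen ball has mass at least $2^{-2^k}$, and the balls are disjoint, so at most $2^{2^k}$ points are chosen. Adding $t_0$ (or dropping the index shift by one level) keeps $|T_{j,k}|\le 2^{2^k}$, after a standard reindexing $k\mapsto k+1$ that costs only a factor $2^{1/\alpha}$. Maximality of the greedy choice gives the following: for every $t$ there is a chosen $s$ with $r_k(s)\le r_k(t)$ and intersecting balls. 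Hence
\begin{align*}
d(t,T_{j,k})\le r_k(t)+r_k(s)\le 2r_k(t).
\end{align*}
Once $2^{-2^k}$ is below the minimal atom mass, every $r_k(t)=0$. Then every point must be selected, so $T_{j,k}=K$ for all large $k$.

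It remains to bound the cost $\sum_k 2^{k/\alpha}r_k(t)$ by the ball-mass integral. For $r<r_k(t)$ we have $\widetilde\mu(B_d(t,r))<2^{-2^k}$, hence $\log(1/\widetilde\mu(B_d(t,r)))>2^k\log 2$. Therefore
\begin{align*}
\int_{r_{k+1}(t)}^{r_k(t)}\Big(\log\tfrac{1}{\widetilde\mu(B_d(t,r))}\Big)^{1/\alpha}dr\ge (2^{k}\log2)^{1/\alpha}\rbr{r_k(t)-r_{k+1}(t)}.
\end{align*}
Summing over $k\ge1$ and using Abel summation, $\sum_k 2^{k/\alpha}(r_k-r_{k+1})\ge(1-2^{-1/\alpha})\sum_k2^{k/\alpha}r_k$, which works because $r_k$ is nonincreasing with $r_k\to0$. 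This bounds $\sum_{k\ge1}2^{k/\alpha}r_k(t)$ by $C_\alpha\int_0^{v_j(t)}(\log 1/\widetilde\mu(B_d(t,r)))^{1/\alpha}dr$. Here we use $r_1(t)\le v_j(t)$, so the integration range lies inside $[0,4v_j(t)]$. The $k=0$ term is $d(t,t_0)=v_j(t)$.

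Finally, Lemma~\ref{lem:mixed_tail_ambient_restriction} converts the $\widetilde\mu$-integral into $2c^{(0)}_{\alpha}\Phi_j(t)+4c^{(0)}_\alpha(\log2)^{1/\alpha}v_j(t)$, which yields the claimed form $C^{(1)}_{\alpha_j}(\Phi_j(t)+v_j(t))$.

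The step I expect to be the main obstacle is the greedy selection. It has to simultaneously respect the cardinality bound $2^{2^k}$ (disjointness plus mass), give the covering estimate with a pointwise radius, and contain $t_0$ at every level without breaking admissibility. The last requirement is handled by the $\tfrac12$ mass at $t_0$ together with the one-level index shift. The ordering by smallest radius is what makes the covering bound hold at $t$ itself rather than at a worse neighbor.
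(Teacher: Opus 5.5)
Your construction matches the paper's proof step for step: the transferred measure $\widetilde\mu_{j,K}$, mass radii at thresholds $2^{-2^k}$, a greedy disjoint-ball selection in nondecreasing order of radius, the atom at $t_0$ giving $r_k(t)\le v_j(t)$, Abel summation, and Lemma~\ref{lem:mixed_tail_ambient_restriction}. Your concern about keeping $t_0$ in every level is unnecessary. The lemma only requires $T_{j,0}=\{t_0\}$, and your level-$k$ greedy sets are already admissible for $k\ge1$ without any reindexing. One step, however, is false.

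The false step is the claim that eventually every point is selected, so that $T_{j,k}=K$. First, $d_j$ is only a pseudo-metric. If $s\neq t$ in $K$ have $d_j(s,t)=0$, then $B_d(s,0)=B_d(t,0)$, and only one of them is ever retained. This is exactly the situation $d_2\equiv0$ used for Corollary~\ref{cor:sub-weibull}, where every greedy set is a singleton. Second, $\widetilde\mu_{j,K}$ is a pushforward and may give $B_d(t,0)$ zero mass. In that case there is no positive ``minimal atom mass'', and $r_k(t)\ge\min\{d_j(t,s):s\in K,\ d_j(t,s)>0\}>0$ for all $k$. For large $k$ the ball of $t$ then always meets an already chosen zero-radius ball, so $t$ is never selected. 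This property is not cosmetic. Lemma~\ref{lem:mixed_tail_finite_pointwise} uses it to make the common partitions eventually singletons, so that $\pi_n(t)=t$ and the telescoping closes. The fix is the one the paper uses: choose $k_K$ with $2^{2^{k_K}}\ge|K|$ and set $T_{j,k}:=K$ for $k\ge k_K$. This preserves admissibility and only decreases $d_j(t,T_{j,k})$, so your cost bound survives. The same zero-mass case affects your Abel summation, where you assert $r_k(t)\to0$. That holds only when $\widetilde\mu_{j,K}(B_d(t,0))>0$. Otherwise the integrand is $+\infty$ on an interval of positive length inside $[0,r_1(t)]$, so the integral, and hence $\Phi_j(t)$, is infinite and the bound is trivial. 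The paper makes this case split explicitly.
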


\begin{proof}[Proof of Lemma
\ref{lem:mixed_tail_pointwise_measure_chain}]
Fix $j\in\cbr{1,2}$ and write $p_j:=\frac{1}{\alpha_j}$.
Let $\widetilde\mu_{j,K}$ be the probability measure supplied by
Lemma \ref{lem:mixed_tail_ambient_restriction}. For $t\in K$ and
$n\ge0$, define
\begin{align*}
r_{j,n}(t)
:=
\min\cbr{
r\in\cbr{d_j(t,s):s\in K}:
\widetilde\mu_{j,K}(B_{d_{j,K}}(t,r))
\ge2^{-2^n}
}.
\end{align*}
The minimum exists because $K$ is finite and the largest
$d_{j,K}$-ball centered at $t$ has
$\widetilde\mu_{j,K}$-mass one. Since the mass threshold
$2^{-2^n}$ is nonincreasing in $n$, the sequence
$\rbr{r_{j,n}(t)}_{n\ge0}$ is nonincreasing.

For a fixed $n\ge0$, we order the points of $K$ by nondecreasing
$r_{j,n}(t)$ and break ties according to an arbitrary fixed enumeration
of $K$. Traverse the ordered list and retain a point $t$ precisely when
\begin{align*}
d_j(t,s)
>
r_{j,n}(t)+r_{j,n}(s)
\end{align*}
for every previously retained point $s$. Denote the set of retained
points by $S_{j,n}$.

We first prove that the balls $\cbr{
B_{d_{j,K}}(s,r_{j,n}(s)):
s\in S_{j,n}
}$ are pairwise disjoint. Suppose that two such balls, centered at
distinct points $s,t\in S_{j,n}$, intersect. Then there exists
$z\in K$ such that
\begin{align*}
d_j(s,z)\le r_{j,n}(s),
\ 
d_j(t,z)\le r_{j,n}(t).
\end{align*}
Applying the triangle inequality would gives us
\begin{align*}
d_j(s,t)
&\le
d_j(s,z)+d_j(z,t)\le
r_{j,n}(s)+r_{j,n}(t).
\end{align*}
On the other hand, because both $s$ and $t$ were retained, the greedy
rule gives
\begin{align*}
d_j(s,t)
>
r_{j,n}(s)+r_{j,n}(t),
\end{align*}
which is a contradiction.

By the definition of $r_{j,n}(s)$, every disjoint ball
has $\widetilde\mu_{j,K}$-mass at least $2^{-2^n}$. Hence, we have
\begin{align*}
1\ge
\sum_{s\in S_{j,n}}
\widetilde\mu_{j,K}
\rbr{B_{d_{j,K}}(s,r_{j,n}(s))}\ge
|S_{j,n}|2^{-2^n}.
\end{align*}
Consequently, we get $|S_{j,n}|
\le
2^{2^n}$. We next prove that $S_{j,n}$ approximates every point at its own mass
radius. If $t\in S_{j,n}$, then
$d_j(t,S_{j,n})=0$. If $t\notin S_{j,n}$, the point $t$ was rejected
because there exists a previously retained point $s\in S_{j,n}$ such
that $r_{j,n}(s)\le r_{j,n}(t)$ and
\begin{align*}
d_j(t,s)
\le
r_{j,n}(t)+r_{j,n}(s).
\end{align*}
Therefore, in both cases, we have
\begin{align}
d_j(t,S_{j,n})
\le
2r_{j,n}(t),
\ t\in K.
\label{eq:mixed_mass_net_approximation}
\end{align}
For $t\in K$, we define
\begin{align*}
F_{j,t}(r)
:=
\left(
\log\frac{1}{
\widetilde\mu_{j,K}(B_{d_{j,K}}(t,r))
}
\right)^{p_j}.
\end{align*}
If $r<r_{j,n}(t)$, by the definition of
$r_{j,n}(t)$, we have that $\widetilde\mu_{j,K}(B_{d_{j,K}}(t,r))
<
2^{-2^n}$, which implies that
\begin{align}
F_{j,t}(r)
\ge
(\log2)^{p_j}2^{np_j}.
\label{eq:mixed_mass_integrand_lower_bound}
\end{align}
Since the intervals$[r_{j,n+1}(t),r_{j,n}(t)),
\ n\ge0$ are pairwise disjoint and contained in
$[0,r_{j,0}(t)]$, applying inequality
\eqref{eq:mixed_mass_integrand_lower_bound} gives us
\begin{align}
\int_0^{r_{j,0}(t)}F_{j,t}(r)dr
&\ge
(\log2)^{p_j}
\sum_{n\ge0}
2^{np_j}
\rbr{r_{j,n}(t)-r_{j,n+1}(t)}.
\label{eq:mixed_mass_integral_radius_differences}
\end{align}
We next compare the last sum with the weighted sum of the mass radii.
If the integral on the left-hand side is infinite, the desired
estimate is immediate. Suppose that it is finite. Then we have $\widetilde\mu_{j,K}(B_{d_{j,K}}(t,0))>0$. 
Indeed, if this mass were zero, the finiteness of $K$ would imply that
$F_{j,t}$ is infinite on an interval of positive length and the conclusion is trivial. 

Since $2^{-2^n}\downarrow0$ as $n$ grows to infinity, the positive mass at radius zero implies that
$r_{j,n}(t)=0$ for sufficiently large $n$.

We may therefore apply summation by parts and obtain
\begin{align*}
&\sum_{n\ge0}
2^{np_j}
\rbr{r_{j,n}(t)-r_{j,n+1}(t)}=
\rbr{1-2^{-p_j}}
\sum_{n\ge0}2^{np_j}r_{j,n}(t)
+
2^{-p_j}r_{j,0}(t)\ge
\rbr{1-2^{-p_j}}
\sum_{n\ge0}2^{np_j}r_{j,n}(t).
\end{align*}
Combining this identity with
\eqref{eq:mixed_mass_integral_radius_differences}, we obtain
\begin{align}
\sum_{n\ge0}2^{np_j}r_{j,n}(t)
\le
\frac{1}{
(\log2)^{p_j}(1-2^{-p_j})
}
\int_0^{r_{j,0}(t)}F_{j,t}(r)dr.
\label{eq:mixed_weighted_mass_radius_bound}
\end{align}

We now define the admissible sequence. Let $T_{j,0}:=\cbr{t_0},
\ 
T_{j,k}:=S_{j,k-1},
\ k\ge1$. Thus, we have that $|T_{j,0}|=1,
\ 
|T_{j,k}|
\le
2^{2^{k-1}}
\le
2^{2^k}$. Thus, the sequence is admissible. Since $K$ is finite, choose
$k_K\ge1$ such that $2^{2^{k_K}}\ge|K|$ and replace $T_{j,k}$ by $K$ for every $k\ge k_K$. This replacement
only decreases $d_j(t,T_{j,k})$, thus it preserves admissibility and ensures
that $T_{j,k}=K$ for sufficiently large $k$.

Since $T_{j,0}=\cbr{t_0}$, we have
\begin{align*}
d_j(t,T_{j,0})
=
d_j(t,t_0)
=
v_j(t).
\end{align*}
Moreover, by \eqref{eq:mixed_mass_net_approximation}, for every
$k\ge1$, we get
\begin{align*}
d_j(t,T_{j,k})
\le
2r_{j,k-1}(t).
\end{align*}
This inequality remains valid at the levels at which $T_{j,k}$ has
been replaced by $K$, because the left-hand side is then zero.
Consequently, we have
\begin{align*}
G_{j,K}(t)
&=
\sum_{k\ge0}
2^{kp_j}d_j(t,T_{j,k})=
v_j(t)
+
\sum_{k\ge1}
2^{kp_j}d_j(t,T_{j,k})\\
&\le
v_j(t)
+
2\sum_{k\ge1}
2^{kp_j}r_{j,k-1}(t)=
v_j(t)
+
2^{1+p_j}
\sum_{n\ge0}
2^{np_j}r_{j,n}(t).
\end{align*}
Using \eqref{eq:mixed_weighted_mass_radius_bound}, we conclude that
\begin{align}
G_{j,K}(t)
&\le
v_j(t)
+
\frac{2^{1+p_j}}{
(\log2)^{p_j}(1-2^{-p_j})
}
\int_0^{r_{j,0}(t)}F_{j,t}(r)dr.
\label{eq:mixed_chain_cost_restricted_integral}
\end{align}

The atom at the anchor now gives the required pointwise truncation.
The closed ball $B_{d_{j,K}}(t,v_j(t))$ contains $t_0$. Since
\begin{align*}
\widetilde\mu_{j,K}(\cbr{t_0})\ge\frac12=2^{-2^0},
\end{align*}
the definition of $r_{j,0}(t)$ implies that $r_{j,0}(t)\le v_j(t)$.
Therefore, we have
\begin{align*}
\int_0^{r_{j,0}(t)}F_{j,t}(r)dr\le
\int_0^{4v_j(t)}
\left(
\log\frac{1}{
\widetilde\mu_{j,K}(B_{d_{j,K}}(t,r))
}
\right)^{p_j}dr\le
2c_{\alpha_j}^{(0)}\Phi_j(t)
+
4c_{\alpha_j}^{(0)}
(\log2)^{p_j}v_j(t),
\end{align*}
where the final inequality follows from
Lemma \ref{lem:mixed_tail_ambient_restriction}. Inserting this estimate
into \eqref{eq:mixed_chain_cost_restricted_integral} and absorbing the
constants into $C_{\alpha_j}^{(1)}$ proves
\begin{align*}
G_{j,K}(t)
\le
C_{\alpha_j}^{(1)}
\rbr{\Phi_j(t)+v_j(t)}.
\end{align*}
The construction is deterministic, and the inequality holds
simultaneously for every $t\in K$. This holds for $j=1$ and $j=2$. We finish the proof.
\end{proof}

We next combine the two admissible sequences. The common refinement is essential because the increment assumption controls the sum of the two
metric contributions for a single process.

\begin{lemma}
\label{lem:mixed_tail_finite_pointwise}
Let $K\subseteq T$ be finite and contain $t_0$. For every
$j\in\cbr{1,2}$, let $\mathcal T_j=(T_{j,k})_{k\ge0}$ be an admissible sequence of subsets of $K$ such that $T_{j,0}=\cbr{t_0}$ and $T_{j,k}=K$ for all sufficiently large $k$. We define
\begin{align*}
G_j(t)
:=
\sum_{k\ge0}
2^{k/\alpha_j}d_j(t,T_{j,k}),
\ t\in K.
\end{align*}
There exists a finite constant $A_{\alpha_1,\alpha_2}$, depending only
on $\alpha_1$ and $\alpha_2$, such that, for every $u\ge1$, with
probability at least $1-e^{-u}$, simultaneously for every $t\in K$,
\begin{align*}
\|Z_t\|
\le
A_{\alpha_1,\alpha_2}
\cbr{
G_1(t)+G_2(t)
+
u^{1/\alpha_1}v_1(t)
+
u^{1/\alpha_2}v_2(t)
}.
\end{align*}
\end{lemma}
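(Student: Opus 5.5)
We build a nested common refinement of the two admissible sequences and chain along it with a single union bound whose level-$n$ deviation parameter is $u+2^n$. The level shift is $b:=b_2=2$. For $n\ge0$ define the partition $\mathcal A_n$ of $K$ by declaring $t\sim_n s$ iff, for every $j\in\cbr{1,2}$ and every $k\le n-b$, the $d_j$-nearest points of $t$ and $s$ in $T_{j,k}$ coincide. Ties are broken by a fixed enumeration, so these nearest points $\pi_{j,k}(t)$ are well defined. For $n<b$ we take $\mathcal A_n=\cbr{K}$.

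These partitions are nested. The number of cells is at most $\prod_{j}\prod_{k\le n-b}2^{2^k}\le 2^{2\cdot 2^{n-b+1}}\le 2^{2^n}$. In each cell $A$ of $\mathcal A_n$ we pick a representative, taking $t_0$ when $t_0\in A$, and we let $\pi_n(t)$ be the representative of the cell containing $t$. Then $\pi_0(t)=t_0$, and $|\cbr{\pi_n(t):t\in K}|\le2^{2^n}$. Because $T_{j,k}=K$ for large $k$, the partitions eventually become singletons, so $\pi_n(t)=t$ for large $n$. This gives the finite telescoping identity $Z_t=\sum_{n\ge1}(Z_{\pi_n(t)}-Z_{\pi_{n-1}(t)})$.

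The key deterministic estimate controls the links. Since $\pi_n(t)$ and $t$ share the cell at level $n$, they have the same $d_j$-nearest point $\pi_{j,n-b}(t)\in T_{j,n-b}$. Hence
\[
d_j(t,\pi_n(t))\le 2\,d_j(t,T_{j,n-b})\quad\text{for } n\ge b .
\]
For $n<b$ we use instead $d_j(t,\pi_n(t))\le d_j(t,t_0)+d_j(t_0,\pi_n(t))$. This is the main obstacle: we must show the representative is not far from $t$ in coarse levels. We handle it by also putting the anchor label into the common refinement, that is, cells at level $n<b$ are just $K$ with representative $t_0$. Then $d_j(t,\pi_n(t))=v_j(t)$ for $n<b$.

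The difficulty is that for $n\ge b$ the bound on $d_j(\pi_n(t),t)$ holds for $t$ but is only needed up to a factor $2$. This is fine because every point of a cell shares the same nearest point. Consequently
\[
d_j(\pi_n(t),\pi_{n-1}(t))\le d_j(t,\pi_n(t))+d_j(t,\pi_{n-1}(t))\le 4\,d_j(t,T_{j,n-1-b})
\]
for $n\ge b+1$, and $\le 2v_j(t)+2d_j(t,T_{j,0})$-type bounds hold for the first $b+1$ links. The case $k=0$ reduces to $v_j$ since $T_{j,0}=\cbr{t_0}$.

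For the probabilistic step, we apply \eqref{eq:mixed_tail_increment_condition} to each pair $(\pi_n(t),\pi_{n-1}(t))$ with parameter $u+2^{n+1}$. The number of distinct pairs at level $n$ is at most $2^{2^n}\cdot2^{2^{n-1}}\le 2^{2^{n+1}}$. The total failure probability is therefore at most
\[
\sum_{n\ge1}2\cdot 2^{2^{n+1}}e^{-u-2^{n+1}}=2e^{-u}\sum_{n\ge1}(2/e)^{2^{n+1}}\le e^{-u}.
\]
A short computation shows the last sum is below $1/2$.

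On the good event we use $(u+2^{n+1})^{1/\alpha_j}\le c_{\alpha_j}(u^{1/\alpha_j}+2^{(n+1)/\alpha_j})$. Summing over the links gives
\[
\|Z_t\|\le\sum_j c_{\alpha_j}\sum_{n\ge1}\bigl(u^{1/\alpha_j}+2^{(n+1)/\alpha_j}\bigr)d_j(\pi_n(t),\pi_{n-1}(t)).
\]
The $2^{(n+1)/\alpha_j}$ part, with the link bound indexed by $k=n-1-b$, is at most $4\cdot2^{(b+2)/\alpha_j}G_j(t)$ plus $O(2^{(b+1)/\alpha_j})v_j(t)\le O(1)G_j(t)$. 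The $u^{1/\alpha_j}$ part is $u^{1/\alpha_j}\sum_n d_j(\pi_n,\pi_{n-1})$, which is at most $u^{1/\alpha_j}(C v_j(t)+4\sum_k d_j(t,T_{j,k}))$.

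One catch remains: $\sum_k d_j(t,T_{j,k})$ is not bounded by $v_j(t)$. Since $u\ge1$, we avoid it by choosing the parameter at level $n$ as $u\,\mathbf 1_{n\le b+1}+2^{n+1}$, i.e.\ we use $u+2^{n+1}$ only on the first $b+1$ links and $2^{n+2}$ thereafter. For $n\ge b+2$ the union bound is then summable without the factor $e^{-u}$. To restore that factor, we still use $u+2^{n+1}$ on every link but bound
\[
(u+2^{n+1})^{1/\alpha_j}\le c\,u^{1/\alpha_j}2^{(n+1)/\alpha_j}
\]
for $u\ge1$, which is absorbed as $u^{1/\alpha_j}G_j(t)$. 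I expect this trade-off to be the delicate point. The intended resolution is to group the tail links and use $d_j(t,T_{j,k})\le v_j(t)$ only in the first $b+1$ levels, and to use the paper's convention that $u$-dependent terms multiply only $v_j$. If this grouping fails, the fallback is to split the event by whether $2^{n}\ge u$, choosing the parameter $\max(u,2^{n+1})$ so that levels with $2^n\ge u$ cost only $G_j$ and levels with $2^n<u$ telescope with the $u^{1/\alpha_j}$ factor against a chain whose total $d_j$-length up to level $\log_2u$ is bounded by $v_j+G_j$-type terms.
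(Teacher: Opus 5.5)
Your common refinement with shift $b=2$, the cell count, and the union-bound arithmetic are fine. The argument has two genuine gaps, and the second is the one you leave open.

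\emph{The link estimate.} The bound $d_j(t,\pi_n(t))\le 2\,d_j(t,T_{j,n-b})$ does not follow from the cell structure. Sharing the nearest point $p=p_{j,n-b}(t)$ only gives $d_j(t,\pi_n(t))\le d_j(t,T_{j,n-b})+d_j(\pi_n(t),T_{j,n-b})$. The last term is the representative's own approximation error, and nothing attached to $t$ controls it. In a supremum bound this is harmless; in a pointwise bound it is not, and preferring $t_0$ as representative makes it fail outright. Take $d_2\equiv0$, $K=\{t_0,t,p\}\subset\mathbb{R}$ with $t_0=0$, $t=1$, $p=1+\eta$, and $T_{1,k}=\{p\}$ for $1\le k<N$, $T_{1,k}=K$ for $k\ge N$. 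Then $t\sim_n t_0$ for all $n\le N-1+b$, so your chain for $t$ is $t_0,\ldots,t_0,t$. Its last link is charged about $2^{(N+b+1)/\alpha_1}$, whereas $G_1(t)=1+\eta\sum_{k=1}^{N-1}2^{k/\alpha_1}$ stays bounded as $\eta\to0$.

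The paper instead picks, in each cell, a minimizer of the combined cost $R_n(s)=\sum_j 2^{n/\alpha_j}d_j(s,T_{j,n-2})$. One point cannot in general minimize $d_1(\cdot,T_{1,k})$ and $d_2(\cdot,T_{2,k})$ simultaneously, so only the combined bound $\sum_j 2^{n/\alpha_j}d_j(t,\pi_n(t))\le 2R_n(t)$ is available. It suffices because the increment condition is applied to the combined threshold anyway. Also, $d_j(t,T_{j,k})$ need not be monotone in $k$, so your $4\,d_j(t,T_{j,n-1-b})$ should be a sum over the two levels.

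\emph{The dependence on $u$.} None of your treatments makes $u^{1/\alpha_j}$ multiply only $v_j(t)$.
\begin{itemize}
\item With parameter $u+2^{n+1}$ on every link, $u^{1/\alpha_j}$ multiplies the whole zigzag length $\sum_n d_j(\pi_n(t),\pi_{n-1}(t))$.
\item Dropping $u$ after level $b+1$ makes the union bound over the remaining levels an absolute constant, not $O(e^{-u})$.
\item The bound $(u+2^{n+1})^{1/\alpha_j}\le c\,u^{1/\alpha_j}2^{(n+1)/\alpha_j}$ yields $u^{1/\alpha_j}G_j(t)$, which is the multiplicative form the lemma is meant to avoid.
\item The inequality $d_j(t,T_{j,k})\le v_j(t)$ is false, since $T_{j,k}$ need not contain $t_0$ for $k\ge1$.
\item Your fallback, parameter $\max(u,2^{n+1})$ link by link, still charges $u^{1/\alpha_j}$ against the coarse zigzag. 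Take $K=\{0,\epsilon,1\}$ with $t_0=0$, $t=\epsilon$, $T_{1,1}=\{1\}$, and $T_{1,k}=K$ for $k\ge2$. Once representatives minimize cost, as the first point requires, the chain is $t_0\to 1\to t$. You then pay about $2u^{1/\alpha_1}$ against a target of order $1+u^{1/\alpha_1}\epsilon$.
\end{itemize}

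The missing idea is to skip the coarse levels with a single link. For $u\ge4$ put $\ell=\lfloor\log_2 u\rfloor$ and control $\|Z_{\pi_\ell(t)}-Z_{t_0}\|$ directly with parameter $16u$, via a union bound over at most $|\mathcal A_\ell|\le 2^{2^\ell}\le 2^u$ points. Then $d_j(\pi_\ell(t),t_0)\le d_j(\pi_\ell(t),t)+v_j(t)$. Because $u^{1/\alpha_j}<2^{1/\alpha_j}2^{\ell/\alpha_j}$, the piece $u^{1/\alpha_j}d_j(t,\pi_\ell(t))$ is absorbed into $R_\ell(t)$, and only $v_j(t)$ keeps the factor $u^{1/\alpha_j}$. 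The links with $n>\ell$ get parameter $16\cdot 2^n$ and fail with total probability $O(e^{-15u})$ because $2^{\ell+1}>u$. The range $1\le u<4$ follows from the case $u=4$.
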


\begin{proof}[Proof of Lemma
\ref{lem:mixed_tail_finite_pointwise}]
For $j\in\cbr{1,2}$, $k\ge0$, and $t\in K$, define the nearest point map $p_{j,k}(t)\in T_{j,k}$ such that
\begin{align*}
d_j(t,p_{j,k}(t))
=
d_j(t,T_{j,k}).
\end{align*}
Such a point exists because $T_{j,k}$ is a nonempty finite set. When
$T_{j,k}=K$, we choose $p_{j,k}(t):=t$.

For every $n\ge2$, define an equivalence relation on $K$ by
\begin{align*}
s\sim_n t\ \text{iff}\ p_{j,k}(s)=p_{j,k}(t)
\end{align*}
for every $j\in\cbr{1,2}$ and every $0\le k\le n-2$. Let
$\mathcal A_n$ be the corresponding partition of $K$, and we write
$\mathcal A_n(t)$ for the cell containing $t$. Equivalently, we define
\begin{align*}
P_n(t)
:=
\rbr{
p_{1,0}(t),\ldots,p_{1,n-2}(t),
p_{2,0}(t),\ldots,p_{2,n-2}(t)
}.
\end{align*}
Then two points lie in the same cell of $\mathcal A_n$ precisely when
they have the same image under $P_n$.

The partitions are nested. Indeed, if $s\sim_n t$, then the nearest
point labels of $s$ and $t$ agree at all levels $0\le k\le n-2$, and
therefore they also agree at all levels $0\le k\le n-3$. Hence,
\begin{align*}
s\sim_n t
\Rightarrow
s\sim_{n-1}t.
\end{align*}
Thus, $\mathcal A_n$ refines $\mathcal A_{n-1}$ for every $n\ge3$.

We next bound the number of cells. By admissibility of the two
sequences, we have that
\begin{align*}
|\mathcal A_n|\le
\prod_{j=1}^2
\prod_{k=0}^{n-2}
|T_{j,k}|\le
\prod_{j=1}^2
\prod_{k=0}^{n-2}
2^{2^k}=
2^{2\sum_{k=0}^{n-2}2^k}=
2^{2(2^{n-1}-1)}=
2^{2^n-2}\le
2^{2^n}.
\end{align*}

For every $n\ge2$, define the pointwise combined approximation cost
\begin{align*}
R_n(t)
:=
2^{n/\alpha_1}d_1(t,T_{1,n-2})
+
2^{n/\alpha_2}d_2(t,T_{2,n-2}).
\end{align*}
For every cell $A\in\mathcal A_n$, choose one point $r_n(A)$ such that
\begin{align*}
r_n(A)\in\operatorname*{argmin}_{s\in A}R_n(s),
\end{align*}
and define $\pi_n(t):=r_n(\mathcal A_n(t))$.
Thus, $\pi_n$ is constant on every cell of $\mathcal A_n$.

Since $t$ and $\pi_n(t)$ belong to the same cell, they have the same
nearest-point label at level $n-2$. Therefore, we have that
\begin{align*}
p_{j,n-2}(t)
=
p_{j,n-2}(\pi_n(t)),
\qquad j\in\cbr{1,2}.
\end{align*}
By the triangle inequality, we obtain
\begin{align*}
d_j(t,\pi_n(t))\le
d_j(t,p_{j,n-2}(t))
+
d_j(p_{j,n-2}(\pi_n(t)),\pi_n(t))=
d_j(t,T_{j,n-2})
+
d_j(\pi_n(t),T_{j,n-2}).
\end{align*}
Multiplying by $2^{n/\alpha_j}$ and summing over
$j\in\cbr{1,2}$, we obtain
\begin{align*}
&2^{n/\alpha_1}d_1(t,\pi_n(t))
+
2^{n/\alpha_2}d_2(t,\pi_n(t))\le
R_n(t)+R_n(\pi_n(t)).
\end{align*}
Since $\pi_n(t)$ is the minimal distance, we have
\begin{align*}
R_n(\pi_n(t))
\le
R_n(t).
\end{align*}
Consequently,
\begin{align}
2^{n/\alpha_1}d_1(t,\pi_n(t))
+
2^{n/\alpha_2}d_2(t,\pi_n(t))
\le
2R_n(t).
\label{eq:mixed_common_representative_bound}
\end{align}

Define $a_{\alpha_1,\alpha_2}
:=
\max\cbr{2^{1/\alpha_1},2^{1/\alpha_2}}$.
For every $n\ge3$, the triangle inequality gives
\begin{align*}
d_j(\pi_n(t),\pi_{n-1}(t))
\le
d_j(\pi_n(t),t)
+
d_j(t,\pi_{n-1}(t)).
\end{align*}
Using \eqref{eq:mixed_common_representative_bound} at level $n$, we
obtain $\sum_{j=1}^2
2^{n/\alpha_j}d_j(\pi_n(t),t)
\le
2R_n(t)$.

At level $n-1$, we use $2^{n/\alpha_j}
\le
a_{\alpha_1,\alpha_2}
2^{(n-1)/\alpha_j}$
and obtain that
\begin{align*}
&\sum_{j=1}^2
2^{n/\alpha_j}d_j(t,\pi_{n-1}(t))\le
a_{\alpha_1,\alpha_2}
\sum_{j=1}^2
2^{(n-1)/\alpha_j}d_j(t,\pi_{n-1}(t))\le
2a_{\alpha_1,\alpha_2}R_{n-1}(t).
\end{align*}
Therefore, combining the inequalities above, we have
\begin{align}
&2^{n/\alpha_1}
d_1(\pi_n(t),\pi_{n-1}(t))
+
2^{n/\alpha_2}
d_2(\pi_n(t),\pi_{n-1}(t))\le
2R_n(t)
+
2a_{\alpha_1,\alpha_2}R_{n-1}(t).
\label{eq:mixed_common_parent_edge_bound}
\end{align}

Because the nearest-point maps are eventually the identity, the
partitions $\mathcal A_n$ are eventually the singleton partition of
$K$. Hence, for every $t\in K$, we have $\pi_n(t)=t$ for all sufficiently large $n$.

For every $n\ge3$, define the set of distinct parent-child edges by
\begin{align*}
E_n
:=
\cbr{
(\pi_n(t),\pi_{n-1}(t)):
t\in K
}.
\end{align*}
Since $\mathcal A_n$ refines $\mathcal A_{n-1}$, every cell of
$\mathcal A_n$ is contained in a unique cell of
$\mathcal A_{n-1}$. Both $\pi_n$ and $\pi_{n-1}$ are constant on a
cell of $\mathcal A_n$. Thus, every cell of $\mathcal A_n$ determines
at most one edge in $E_n$, and
\begin{align}
|E_n|
\le
|\mathcal A_n|
\le
2^{2^n}.
\label{eq:mixed_common_edge_cardinality}
\end{align}

We first suppose that $u\ge4$ and define $\ell:=\floor{\log_2u}$. Then, we have
\begin{align*}
\ell\ge2,
\ 
2^\ell\le u<2^{\ell+1}.
\end{align*}
We now apply the mixed-tail increment bound with parameter $16u$ to all
distinct coarse edges $\cbr{(\pi_\ell(t),t_0),
\ t\in K}$. The number of distinct points $\pi_\ell(t)$ is at most
$|\mathcal A_\ell|$. Hence, by a union bound, we have
\begin{align*}
&\PP\left\{
\exists t\in K\ \text{s.t.} \|Z_{\pi_\ell(t)}-Z_{t_0}\|
>
(16u)^{1/\alpha_1}
d_1(\pi_\ell(t),t_0)
+
(16u)^{1/\alpha_2}
d_2(\pi_\ell(t),t_0)
\right\}\\
\le&2|\mathcal A_\ell|e^{-16u}\le
2\exp\cbr{(\log2)2^\ell-16u}\le
2e^{-15u}.
\end{align*}
For every $n>\ell$, apply the mixed-tail increment bound with parameter
$16\cdot2^n$ to every edge in $E_n$. By
\eqref{eq:mixed_common_edge_cardinality} and another union bound,
\begin{align*}
&\PP\left\{\exists (s,r)\in E_n\ \text{s.t.}\|Z_s-Z_r\|
>
(16\cdot2^n)^{1/\alpha_1}d_1(s,r)
+
(16\cdot2^n)^{1/\alpha_2}d_2(s,r)
\right\}\\
&\le
2|E_n|e^{-16\cdot2^n}\le
2\exp\cbr{-(16-\log2)2^n}\le
2e^{-15\cdot2^n}.
\end{align*}
Since $2^{\ell+1}>u$, by direct algebra, we have
\begin{align*}
\sum_{n>\ell}2e^{-15\cdot2^n}\le
2\sum_{r\ge1}e^{-15u2^{r-1}}\le
2\sum_{r\ge1}e^{-15ur}=
\frac{2e^{-15u}}{1-e^{-15u}}.
\end{align*}
For $u\ge4$,  we have that $2e^{-15u}
\le
\frac12e^{-u},
\ 
\frac{2e^{-15u}}{1-e^{-15u}}
\le
\frac12e^{-u}$. 

Therefore, outside an event $\cE_u$ of probability at most $e^{-u}$,
all the preceding coarse and fine increment bounds hold
simultaneously. We now work conditioning on this event $\cE_u$. 

Since $\pi_n(t)=t$ for all sufficiently
large $n$ and $Z_{t_0}=0$, we may telescope and obtain, simultaneously
for every $t\in K$,
\begin{align*}
\|Z_t\|
&\le
\|Z_{\pi_\ell(t)}-Z_{t_0}\|
+
\sum_{n>\ell}
\|Z_{\pi_n(t)}-Z_{\pi_{n-1}(t)}\|\\
&\le
\sum_{j=1}^2
(16u)^{1/\alpha_j}
d_j(\pi_\ell(t),t_0)+
\sum_{n>\ell}
\sum_{j=1}^2
(16\cdot2^n)^{1/\alpha_j}
d_j(\pi_n(t),\pi_{n-1}(t)).
\end{align*}

We bound these two terms separately. For the first term, by the
triangle inequality, we have
\begin{align*}
d_j(\pi_\ell(t),t_0)\le
d_j(\pi_\ell(t),t)+d_j(t,t_0)=
d_j(t,\pi_\ell(t))+v_j(t).
\end{align*}
Define $c_{\alpha_1,\alpha_2}
:=
\max\cbr{16^{1/\alpha_1},16^{1/\alpha_2}}$. Since $u<2^{\ell+1}$, we have
\begin{align*}
u^{1/\alpha_j}
<
2^{1/\alpha_j}2^{\ell/\alpha_j}
\le
a_{\alpha_1,\alpha_2}2^{\ell/\alpha_j}.
\end{align*}
It follows from
\eqref{eq:mixed_common_representative_bound} that
\begin{align*}
\sum_{j=1}^2
(16u)^{1/\alpha_j}
d_j(t,\pi_\ell(t))\le
c_{\alpha_1,\alpha_2}
a_{\alpha_1,\alpha_2}
\sum_{j=1}^2
2^{\ell/\alpha_j}d_j(t,\pi_\ell(t))\le
2c_{\alpha_1,\alpha_2}^{(16)}
a_{\alpha_1,\alpha_2}R_\ell(t).
\end{align*}
The anchor part satisfies that
\begin{align*}\sum_{j=1}^2
(16u)^{1/\alpha_j}v_j(t)\le
c_{\alpha_1,\alpha_2}
\cbr{
u^{1/\alpha_1}v_1(t)
+
u^{1/\alpha_2}v_2(t)
}.
\end{align*}

For the term $\sum_{n>\ell}
\sum_{j=1}^2
(16\cdot2^n)^{1/\alpha_j}
d_j(\pi_n(t),\pi_{n-1}(t))$, by inequality
\eqref{eq:mixed_common_parent_edge_bound}, we have that
\begin{align*}
\sum_{n>\ell}
\sum_{j=1}^2
(16\cdot2^n)^{1/\alpha_j}
d_j(\pi_n(t),\pi_{n-1}(t))\le&
c_{\alpha_1,\alpha_2}
\sum_{n>\ell}
\cbr{
2R_n(t)
+
2a_{\alpha_1,\alpha_2}R_{n-1}(t)
}\\
\le&
2c_{\alpha_1,\alpha_2}
\rbr{1+a_{\alpha_1,\alpha_2}}
\sum_{n\ge2}R_n(t).
\end{align*}
Combining these inequalities together, we obtain
\begin{align}
\|Z_t\|
\le
C_{\alpha_1,\alpha_2}^{(2)}
\cbr{
u^{1/\alpha_1}v_1(t)
+
u^{1/\alpha_2}v_2(t)
+
\sum_{n\ge2}R_n(t)
}.
\label{eq:mixed_finite_pointwise_common_chain}
\end{align}

Finally, by the definition of $R_n(t)$ and the change of variables
$k=n-2$, we have the exact identity
\begin{align*}
\sum_{n\ge2}R_n(t)=
\sum_{n\ge2}
\sum_{j=1}^2
2^{n/\alpha_j}d_j(t,T_{j,n-2})=
\sum_{j=1}^2
2^{2/\alpha_j}
\sum_{k\ge0}
2^{k/\alpha_j}d_j(t,T_{j,k})=
2^{2/\alpha_1}G_1(t)
+
2^{2/\alpha_2}G_2(t).
\end{align*}
Inserting this identity into
\eqref{eq:mixed_finite_pointwise_common_chain} and enlarging the
constant proves the conclusion for $u\ge4$.

We now suppose that $1\le u<4$. Apply the result already proved with
the parameter $4$. Its failure probability is $e^{-4}\le e^{-u}$.
Moreover, since $u\ge1$, by some algebra, we have
\begin{align*}
4^{1/\alpha_j}v_j(t)
\le
4^{1/\alpha_j}
u^{1/\alpha_j}v_j(t),
\ j\in\cbr{1,2}.
\end{align*}
Absorbing the constants $4^{1/\alpha_1}$ and
$4^{1/\alpha_2}$ into $A_{\alpha_1,\alpha_2}$ proves the result for
every $u\ge1$.

THus, we finish the proof.
\end{proof}

Combining the deterministic measure chains with the preceding
probabilistic lemma gives the finite ambient-measure estimate used in
the proof of the theorem.

\begin{lemma}
\label{lem:mixed_tail_finite_ambient_pointwise}
There exists a finite constant $B_{\alpha_1,\alpha_2}$, depending only
on $\alpha_1$ and $\alpha_2$, such that, for every finite set
$K\subseteq T$ containing $t_0$ and every $u\ge1$, with probability at
least $1-e^{-u}$, simultaneously for every $t\in K$, we have
\begin{align*}
\|Z_t\|
\le
B_{\alpha_1,\alpha_2}
\cbr{
\Phi_1(t)+\Phi_2(t)
+
u^{1/\alpha_1}v_1(t)
+
u^{1/\alpha_2}v_2(t)
}.
\end{align*}
\end{lemma}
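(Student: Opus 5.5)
The proof combines the two preceding lemmas; no new probabilistic argument is needed. Fix a finite set $K\subseteq T$ with $t_0\in K$, and fix $u\ge1$.

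\textbf{Deterministic step.} For each $j\in\cbr{1,2}$, invoke Lemma~\ref{lem:mixed_tail_pointwise_measure_chain}. It produces an admissible sequence $\mathcal T_j=(T_{j,k})_{k\ge0}$ of subsets of $K$ with the following properties:
\begin{itemize}
\item $T_{j,0}=\cbr{t_0}$;
\item $T_{j,k}=K$ for all large $k$;
\item for every $t\in K$, the pointwise cost satisfies $G_{j,K}(t)\le C^{(1)}_{\alpha_j}\rbr{\Phi_j(t)+v_j(t)}$.
\end{itemize}
The construction involves no randomness: it depends only on $K$, $d_j$, $\mu_j$ and $t_0$. Hence these sequences may be fixed before any event is chosen.

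\textbf{Probabilistic step.} The two sequences $\mathcal T_1,\mathcal T_2$ satisfy exactly the hypotheses of Lemma~\ref{lem:mixed_tail_finite_pointwise}, and $G_j$ there coincides with $G_{j,K}$. That lemma therefore gives an event of probability at least $1-e^{-u}$ on which, simultaneously for all $t\in K$,
\begin{align*}
\|Z_t\|\le A_{\alpha_1,\alpha_2}\cbr{G_{1,K}(t)+G_{2,K}(t)+u^{1/\alpha_1}v_1(t)+u^{1/\alpha_2}v_2(t)}.
\end{align*}

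\textbf{Combining.} On this event, substitute the deterministic bounds on $G_{j,K}$. The extra terms $C^{(1)}_{\alpha_j}v_j(t)$ must be absorbed into the confidence terms. Since $u\ge1$, we have $v_j(t)\le u^{1/\alpha_j}v_j(t)$, so they are absorbed at the cost of a constant factor. The result is the claimed bound with
\begin{align*}
B_{\alpha_1,\alpha_2}:=A_{\alpha_1,\alpha_2}\rbr{1+\max_j C^{(1)}_{\alpha_j}}.
\end{align*}
This constant depends only on $\alpha_1,\alpha_2$, and in particular not on $K$.

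\textbf{Infinite $\Phi_j$.} If $\Phi_j(t)=\infty$ for some $t$, the inequality at that $t$ holds trivially, and the lemmas remain valid in the extended-real sense.

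\textbf{Main obstacle.} There is no substantive obstacle. The one point to check is that the absorption of $v_j(t)$ genuinely uses $u\ge1$, which is why the statement restricts to that range.
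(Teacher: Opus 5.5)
Your proposal is correct and follows the paper's proof essentially verbatim. You feed the deterministic measure-generated chains of Lemma~\ref{lem:mixed_tail_pointwise_measure_chain} into Lemma~\ref{lem:mixed_tail_finite_pointwise}, and you absorb the extra $C^{(1)}_{\alpha_j}v_j(t)$ terms using $v_j(t)\le u^{1/\alpha_j}v_j(t)$ for $u\ge1$; your constant $A_{\alpha_1,\alpha_2}\bigl(1+\max_j C^{(1)}_{\alpha_j}\bigr)$ is the same as the paper's choice.
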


\begin{proof}[Proof of Lemma
\ref{lem:mixed_tail_finite_ambient_pointwise}]
By Lemma \ref{lem:mixed_tail_pointwise_measure_chain}, for every
$j\in\cbr{1,2}$, there exists an admissible sequence
$\mathcal T_j=(T_{j,k})_{k\ge0}$, anchored at $t_0$ and eventually
equal to $K$, such that, simultaneously for every $t\in K$,
\begin{align*}
G_j(t)
:=
\sum_{k\ge0}
2^{k/\alpha_j}d_j(t,T_{j,k})
\le
C_{\alpha_j}^{(1)}
\rbr{\Phi_j(t)+v_j(t)}.
\end{align*}
Apply Lemma \ref{lem:mixed_tail_finite_pointwise} to these two
sequences. Outside an event of probability at most $e^{-u}$, for every
$t\in K$, we have
\begin{align*}
\|Z_t\|
&\le
A_{\alpha_1,\alpha_2}
\cbr{
G_1(t)+G_2(t)
+
u^{1/\alpha_1}v_1(t)
+
u^{1/\alpha_2}v_2(t)
}\\
&\le
A_{\alpha_1,\alpha_2}
\left[
C_{\alpha_1}^{(1)}\Phi_1(t)
+
C_{\alpha_2}^{(1)}\Phi_2(t)+
C_{\alpha_1}^{(1)}v_1(t)
+
C_{\alpha_2}^{(1)}v_2(t)+
u^{1/\alpha_1}v_1(t)
+
u^{1/\alpha_2}v_2(t)
\right].
\end{align*}
Since $u\ge1$, we have
\begin{align*}
v_j(t)
\le
u^{1/\alpha_j}v_j(t),
\qquad j\in\cbr{1,2}.
\end{align*}
Therefore, we have
\begin{align*}
C_{\alpha_j}^{(1)}v_j(t)
+
u^{1/\alpha_j}v_j(t)
\le
\rbr{C_{\alpha_j}^{(1)}+1}
u^{1/\alpha_j}v_j(t).
\end{align*}
Absorbing the finitely many constants into
$B_{\alpha_1,\alpha_2}$, we finish the proof.
\end{proof}

We are now ready to prove the main theorem.

\begin{proof}[Proof of Theorem
\ref{thm:mixed_tail_majorization}] For positive rational vectors
$\boldsymbol q=(q_1,q_2)\in\mathbb Q_{>0}^2,
\ 
\boldsymbol s=(s_1,s_2)\in\mathbb Q_{>0}^2$,
we define
\begin{align*}
H_{\boldsymbol q,\boldsymbol s}
:=
\cbr{
t\in T:
\Phi_j(t)\le q_j
\text{ and }
v_j(t)\le s_j
\text{ for }j=1,2
}.
\end{align*}
By Lemma \ref{lem:mixed_tail_measurability}, every set
$H_{\boldsymbol q,\boldsymbol s}$ is Borel measurable.

We next choose countable dense subsets of these sets. Since
$(T,\rho)$ is a separable pseudo-metric space, its metric quotient by
the relation $\rho(s,t)=0$ is a separable metric space and hence is
second countable. $H_{\boldsymbol q,\boldsymbol s}$ is a subset of $T$. Consequently, every
$H_{\boldsymbol q,\boldsymbol s}$ is separable with respect to $\rho$.

For every pair
$(\boldsymbol q,\boldsymbol s)$, we choose a countable $\rho$-dense subset such that
$D_{\boldsymbol q,\boldsymbol s}
\subseteq
H_{\boldsymbol q,\boldsymbol s}$. If the corresponding set is empty, take
$D_{\boldsymbol q,\boldsymbol s}=\varnothing$.

The collection of positive rational vectors is countable. Hence, the
union
\begin{align*}
D
:=
\cbr{t_0}\cup
\bigcup_{
\boldsymbol q\in\mathbb Q_{>0}^2,\,
\boldsymbol s\in\mathbb Q_{>0}^2
}
D_{\boldsymbol q,\boldsymbol s}
\end{align*}
is countable. Beginning with the anchor, we enumerate it as $D=\cbr{t_0,t_1,t_2,\ldots}$.

If $D$ is finite, repeat $t_0$ in the enumeration so that the same
notation remains valid. Define the increasing sequence of finite sets
\begin{align*}
K_N:=\cbr{t_0,t_1,\ldots,t_N},
\ N\ge1.
\end{align*}
For every $N\ge1$, let $O_N$ be the event
\begin{align*}
O_N
:=
\bigcap_{t\in K_N}
\cbr{
\|Z_t\|
\le
B_{\alpha_1,\alpha_2}
\left[
\Phi_1(t)+\Phi_2(t)
+
u^{1/\alpha_1}v_1(t)
+
u^{1/\alpha_2}v_2(t)
\right]
}.
\end{align*}
These are the actual conclusion events in
Lemma \ref{lem:mixed_tail_finite_ambient_pointwise}. Then, we have that
\begin{align*}
\PP(O_N)\ge1-e^{-u},
\qquad N\ge1.
\end{align*}
Moreover, since $K_N\subseteq K_{N+1}$, the events are decreasing, i.e., $
E_{N+1}\subseteq E_N$.

By the continuity of probability, we have that,
\begin{align*}
\PP\left(
\bigcap_{N\ge1}O_N
\right)
&=
\lim_{N\to\infty}\PP(O_N)\ge
1-e^{-u}.
\end{align*}

Fix an outcome in this event. By the null-set modification made at the
beginning of the appendix, the sample path $t\mapsto Z_t$ is
$\rho$-continuous. Let $t\in T$ satisfy $\Phi_1(t)<\infty,
\ 
\Phi_2(t)<\infty$ and choose arbitrary positive rational numbers such that
\begin{align*}
q_j>\Phi_j(t),
\ 
s_j>v_j(t),
\  j\in\cbr{1,2}.
\end{align*}
Then, we have $t\in H_{\boldsymbol q,\boldsymbol s}$.
Since $D_{\boldsymbol q,\boldsymbol s}$ is $\rho$-dense in this set,
there exists a sequence $\cbr{x_m}_{m\ge1}
\subseteq
D_{\boldsymbol q,\boldsymbol s}$ such that $\rho(x_m,t)\rightarrow0$.

Every point $x_m$ belongs to $D$, and hence it belongs to $K_N$ for
all sufficiently large $N$. Conditioned on the event
$\bigcap_{N\ge1}O_N$, we have that
\begin{align*}
\|Z_{x_m}\|
&\le
B_{\alpha_1,\alpha_2}
\cbr{
\Phi_1(x_m)+\Phi_2(x_m)
+
u^{1/\alpha_1}v_1(x_m)
+
u^{1/\alpha_2}v_2(x_m)
}\\
&\le
B_{\alpha_1,\alpha_2}
\cbr{
q_1+q_2
+
u^{1/\alpha_1}s_1
+
u^{1/\alpha_2}s_2
}.
\end{align*}
Recall the path continuity and we have
$Z_{x_m}\rightarrow Z_t
\ 
\text{in }(\mathbb B,\|\cdot\|)$.

Hence, we get
\begin{align*}
\|Z_t\|
&=
\lim_{m\to\infty}\|Z_{x_m}\|\le
B_{\alpha_1,\alpha_2}
\cbr{
q_1+q_2
+
u^{1/\alpha_1}s_1
+
u^{1/\alpha_2}s_2
}.
\end{align*}
Letting $q_j\downarrow\Phi_j(t)$ and
$s_j\downarrow v_j(t)$ through rational values and we obtain
\begin{align*}
\|Z_t\|
\le
B_{\alpha_1,\alpha_2}
\cbr{
\Phi_1(t)+\Phi_2(t)
+
u^{1/\alpha_1}v_1(t)
+
u^{1/\alpha_2}v_2(t)
}.
\end{align*}
The event used above does not depend on $t$. Hence, the inequality
holds simultaneously for every $t\in T$ for which both profiles are
finite. If either profile is infinite, the assertion is vacuous.

Finally, set $u:=\log\frac{e}{\delta}$.
Then $u\ge1$ and $e^{-u}
=
\frac{\delta}{e}
\le
\delta.$
The theorem follows with
$C_{\alpha_1,\alpha_2}:=B_{\alpha_1,\alpha_2}$.
Therefore, with probability at least $1-\delta$, simultaneously for
every $t\in T$,
\begin{align*}
\|Z_t\|
\le
C_{\alpha_1,\alpha_2}
\cbr{
\Phi_1(t)+\Phi_2(t)
+
v_1(t)
\left(
\log\frac{e}{\delta}
\right)^{1/\alpha_1}
+
v_2(t)
\left(
\log\frac{e}{\delta}
\right)^{1/\alpha_2}
}.
\end{align*}
We finish the proof.
\end{proof}
\begin{proof}[Proof of Corollary \ref{cor:sub-weibull}]
    The proof is intermediate. We can just set $d_2\equiv 0$ to be the zero metric and apply Theorem \ref{thm:mixed_tail_majorization}. Then, we shall obtain the result.
\end{proof}

\section{Proofs in Section \ref{sec:multi_regime_majorization}}
\label{app:proofs_multi_regime}

Throughout this appendix, we use the convention $\log\frac{1}{0}:=+\infty$. Changing the process on a null event does not affect any of the claims
below. We therefore redefine $Z_t(\omega):=0,
\ \omega\notin\Omega_0,\ t\in T$ as in Appendix \ref{app:proofs_mixed_tail}.
Thus, every sample path used below is $\rho$-continuous and satisfies
$Z_{t_0}=0$.

We first verify the measurability of the pointwise radii and local
functionals.

\begin{lemma}
\label{lem:multi_regime_measurability}
For every $j\in\cbr{1,\ldots,m}$, the map
\begin{align*}
(t,r)
\longmapsto
\mu_j(B_{d_j}(t,r))
\end{align*}
is Borel measurable on $T\times[0,\infty)$. Consequently, $v_j$ and
$\Phi_j$ are $[0,\infty]$-valued Borel functions on $T$.
\end{lemma}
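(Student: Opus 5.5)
The statement to prove is Lemma~\ref{lem:multi_regime_measurability}. The argument mirrors that of Lemma~\ref{lem:mixed_tail_measurability} verbatim, with the index $j$ now ranging over $\cbr{1,\ldots,m}$. The only ingredients are joint measurability of $d_j$ and the standard fact that parameterized integrals of jointly measurable nonnegative functions against a fixed (hence $\sigma$-finite) probability measure are measurable in the parameter.

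Fix $j$. Since $d_j$ is $\mathcal A\otimes\mathcal A$-measurable and $(t,r,s)\mapsto r$ is measurable, the set $\cbr{(t,r,s):d_j(t,s)\le r}$ belongs to $\mathcal A\otimes\mathcal B([0,\infty))\otimes\mathcal A$. Hence $(t,r,s)\mapsto\mathbf 1_{\cbr{d_j(t,s)\le r}}$ is jointly measurable. By the Tonelli-type measurability of sections and integrals, the map
\begin{align*}
(t,r)\mapsto\int_T\mathbf 1_{\cbr{d_j(t,s)\le r}}\mu_j(ds)=\mu_j(B_{d_j}(t,r))
\end{align*}
is Borel on $T\times[0,\infty)$. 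Next, $v_j(t)=d_j(t,t_0)$ is the composition of $t\mapsto(t,t_0)$ with $d_j$, so it is Borel.

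For $\Phi_j$, I would again use the piecewise integrand $Q_j(t,r)$. It equals $\bigl(\log(1/\mu_j(B_{d_j}(t,r)))\bigr)^{1/\alpha_j}$ on $\cbr{r\le 4v_j(t)}$ and $0$ elsewhere, with $\log(1/0)=+\infty$. This $Q_j$ is a composition of the Borel map above with the Borel map $x\mapsto(\log(1/x))^{1/\alpha_j}$ from $[0,1]$ to $[0,\infty]$, multiplied by the indicator of the measurable set $\cbr{(t,r):r\le4v_j(t)}$. Defining it piecewise avoids the $0\cdot\infty$ issue. Then $\Phi_j(t)=\int_0^\infty Q_j(t,r)\,dr$, and a second application of Tonelli measurability, now with Lebesgue measure on $[0,\infty)$, shows that $\Phi_j$ is a $[0,\infty]$-valued Borel function.

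No step is a genuine obstacle. The only point needing care is the extended-real convention, handled by the piecewise $Q_j$, and the argument applies to each of the finitely many $j$ separately.
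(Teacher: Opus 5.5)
Your proposal is correct and follows essentially the same argument as the paper. You prove joint measurability of $(t,r,s)\mapsto\mathbf 1_{\cbr{d_j(t,s)\le r}}$, use measurability of parameterized integrals for the ball masses, and obtain Borel measurability of $v_j$ by composition. You then use the piecewise integrand $Q_j$ with a second parameterized integral over $r$ to get $\Phi_j$ as a $[0,\infty]$-valued Borel function.
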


\begin{proof}[Proof of Lemma \ref{lem:multi_regime_measurability}]
Fix $j\in\cbr{1,\ldots,m}$. Since $d_j$ is jointly
$\mathcal A\otimes\mathcal A$-measurable, the map
\begin{align*}
(t,r,s)
\longmapsto
\mathbf 1_{\cbr{d_j(t,s)\le r}}
\end{align*}
is measurable on
$T\times[0,\infty)\times T$. Therefore, by measurability of
parameterized integrals, we have that
\begin{align*}
(t,r)
\longmapsto
\int_T
\mathbf 1_{\cbr{d_j(t,s)\le r}}\mu_j(ds)
=
\mu_j(B_{d_j}(t,r))
\end{align*}
is Borel measurable. Since $t_0$ is fixed, the map
$v_j(t)=d_j(t,t_0)$ is also Borel measurable.

Define
\begin{align*}
Q_j(t,r)
:=
\begin{cases}
\displaystyle
\left(
\log\frac{1}{\mu_j(B_{d_j}(t,r))}
\right)^{1/\alpha_j},
&0\le r\le4v_j(t),\\[1.2ex]
0,
&r>4v_j(t).
\end{cases}
\end{align*}
The preceding measurability statements show that $Q_j$ is a
nonnegative, extended-real-valued Borel function. The piecewise
definition avoids the ambiguous product $0\cdot\infty$ when a ball has
zero $\mu_j$-mass. By the definition of $\Phi_j$, we have
\begin{align*}
\Phi_j(t)
=
\int_0^\infty Q_j(t,r)dr.
\end{align*}
Another application of measurability of parameterized integrals proves
that $\Phi_j$ is Borel measurable. The same argument applies to every
$j\in\cbr{1,\ldots,m}$, which finishes the proof.
\end{proof}

We next transfer each fixed ambient measure to an arbitrary finite
subset. This nearest-point construction permits all the measure-generated
chains to be built inside the same finite set while retaining the ambient
ball masses in their respective pseudo-metrics.

\begin{lemma}
\label{lem:multi_regime_ambient_restriction}
Let $K\subseteq T$ be finite and contain $t_0$. For
$j\in\cbr{1,\ldots,m}$, write
$d_{j,K}:=d_j|_{K\times K}$. There exists a probability measure
$\widetilde\mu_{j,K}$ on $K$ such that, for every $t\in K$ and every
$r\ge0$,
\begin{align*}
\widetilde\mu_{j,K}(\cbr{t_0})
\ge\frac12,
\qquad
\widetilde\mu_{j,K}(B_{d_{j,K}}(t,2r))
\ge
\frac12\mu_j(B_{d_j}(t,r)).
\end{align*}
Moreover, if $c_{\alpha_j}^{(0)}:=
\max\cbr{1,2^{1/\alpha_j-1}}$,
then we have
\begin{align}
&\int_0^{4v_j(t)}
\left(
\log\frac{1}{
\widetilde\mu_{j,K}(B_{d_{j,K}}(t,r))
}
\right)^{1/\alpha_j}dr\le
2c_{\alpha_j}^{(0)}\Phi_j(t)
+
4c_{\alpha_j}^{(0)}
(\log2)^{1/\alpha_j}v_j(t).
\label{eq:multi_regime_ambient_integral_comparison}
\end{align}
\end{lemma}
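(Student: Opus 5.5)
The statement is the $m$-regime analogue of Lemma \ref{lem:mixed_tail_ambient_restriction}. Its proof is regime-by-regime: only the single pair $(d_j,\mu_j)$ is involved for each fixed $j$, so I will fix $j\in\cbr{1,\ldots,m}$ and repeat the nearest-point pushforward construction.

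\textbf{Step 1: a measurable nearest-point map.} Enumerate $K=\cbr{k_1,\ldots,k_N}$. Define the sets $C_i$ exactly as in the two-metric case: strict inequality against earlier indices and weak inequality against later ones. These $C_i$ are Borel because $d_j$ is jointly measurable, and they partition $T$. Setting $q_{j,K}:=k_i$ on $C_i$ gives a Borel map $T\to K$ with $d_j(s,q_{j,K}(s))=d_j(s,K)$.

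\textbf{Step 2: the measure and the ball-mass comparison.} Put
\begin{align*}
\widetilde\mu_{j,K}:=\frac12\delta_{t_0}+\frac12(q_{j,K})_\sharp\mu_j.
\end{align*}
The atom bound $\widetilde\mu_{j,K}(\cbr{t_0})\ge\frac12$ is then immediate. For the ball comparison, take $t\in K$ and $s\in B_{d_j}(t,r)$. The nearest-point property gives $d_j(s,q_{j,K}(s))\le d_j(s,t)\le r$. The triangle inequality then gives $d_j(t,q_{j,K}(s))\le 2r$. Hence $B_{d_j}(t,r)\subseteq q_{j,K}^{-1}(B_{d_{j,K}}(t,2r))$, and dropping the atom yields the factor-$\frac12$ inequality.

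\textbf{Step 3: the integral comparison.} Substitute $r=2u$ to rewrite the left side of \eqref{eq:multi_regime_ambient_integral_comparison} as
\begin{align*}
2\int_0^{2v_j(t)}\left(\log\frac{1}{\widetilde\mu_{j,K}(B_{d_{j,K}}(t,2u))}\right)^{1/\alpha_j}du .
\end{align*}
By Step 2, the logarithm is at most $\log\frac{1}{\mu_j(B_{d_j}(t,u))}+\log 2$, with the convention $\log(1/0)=+\infty$ making the infinite case trivial. Apply the elementary inequality $(x+y)^{p}\le\max\cbr{1,2^{p-1}}(x^p+y^p)$ with $p=1/\alpha_j$: it is subadditivity when $p\le1$ and convexity when $p>1$. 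Then split the integral into two pieces. The first is bounded by $2c^{(0)}_{\alpha_j}\Phi_j(t)$, after enlarging the range of integration from $2v_j(t)$ to $4v_j(t)$. The second equals $2c^{(0)}_{\alpha_j}(\log2)^{1/\alpha_j}\cdot 2v_j(t)$.

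Nothing here is a genuine obstacle. The only points needing care are the measurability of $q_{j,K}$, handled through the explicit tie-breaking partition, and the extended-real conventions when a ball has zero $\mu_j$-mass. Both are handled exactly as in Lemma \ref{lem:mixed_tail_ambient_restriction}.
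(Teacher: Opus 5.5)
Your proposal is correct and matches the paper's proof essentially step for step. Both use the tie-broken measurable nearest-point map, the measure $\frac12\delta_{t_0}+\frac12(q_{j,K})_\sharp\mu_j$, the triangle-inequality ball inclusion, the substitution $r=2u$, and the inequality $(x+y)^{p}\le\max\cbr{1,2^{p-1}}(x^p+y^p)$, with the range enlarged from $2v_j(t)$ to $4v_j(t)$ at the end.
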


\begin{proof}[Proof of Lemma
\ref{lem:multi_regime_ambient_restriction}]
Fix any $j\in\cbr{1,\ldots,m}$ and enumerate $K=\cbr{k_1,\ldots,k_N}$.
For every $s\in T$, let $q_{j,K}(s)$ be a point in $K$ minimizing
$d_j(s,\cdot)$, with ties broken by choosing the point having the
smallest index, i.e., $q_{j,K}$ is the nearest point map. More explicitly, define
\begin{align*}
C_i
:={}&
\bigcap_{r<i}
\cbr{s\in T:d_j(s,k_i)<d_j(s,k_r)}
\cap
\bigcap_{r>i}
\cbr{s\in T:d_j(s,k_i)\le d_j(s,k_r)}.
\end{align*}
Every $C_i$ is Borel measurable because $d_j$ is jointly Borel. The
sets $C_1,\ldots,C_N$ form a partition of $T$, and
$q_{j,K}(s)=k_i$ for $s\in C_i$. Hence, $q_{j,K}:T\to K$ is Borel
measurable and satisfies
\begin{align*}
d_j(s,q_{j,K}(s))
=
d_j(s,K)
:=
\min_{k\in K}d_j(s,k).
\end{align*}

We define
\begin{align*}
\widetilde\mu_{j,K}
:=
\frac12\delta_{t_0}
+
\frac12(q_{j,K})_\sharp\mu_j,
\end{align*}
where$
(q_{j,K})_\sharp\mu_j(A)
:=
\mu_j(q_{j,K}^{-1}(A)),
\ A\subseteq K$.
By construction, we have $\widetilde\mu_{j,K}(\cbr{t_0})\ge\frac12$.

We now prove the ball-mass comparison. Fix $t\in K$ and $r\ge0$.
If $s\in B_{d_j}(t,r)$, then the nearest-point property and the fact
that $t\in K$ imply that
\begin{align*}
d_j(s,q_{j,K}(s))
\le
d_j(s,t)
\le r.
\end{align*}
By the triangle inequality, we have
\begin{align*}
d_j(t,q_{j,K}(s))
&\le
d_j(t,s)+d_j(s,q_{j,K}(s))
\le2r.
\end{align*}
Therefore, we obtain $q_{j,K}(B_{d_j}(t,r))
\subseteq
B_{d_{j,K}}(t,2r)$.
It follows that
\begin{align*}
\widetilde\mu_{j,K}(B_{d_{j,K}}(t,2r))
&\ge
\frac12(q_{j,K})_\sharp\mu_j(B_{d_{j,K}}(t,2r))=
\frac12\mu_j(q_{j,K}^{-1}(B_{d_{j,K}}(t,2r)))
\ge
\frac12\mu_j(B_{d_j}(t,r)).
\end{align*}
We now prove the integral comparison. Put
$p_j:=1/\alpha_j$. By the change of variables $r=2u$, we have
\begin{align*}
&\int_0^{4v_j(t)}
\left(
\log\frac{1}{
\widetilde\mu_{j,K}(B_{d_{j,K}}(t,r))
}
\right)^{p_j}dr=
2\int_0^{2v_j(t)}
\left(
\log\frac{1}{
\widetilde\mu_{j,K}(B_{d_{j,K}}(t,2u))
}
\right)^{p_j}du.
\end{align*}
The ball-mass comparison gives
\begin{align*}
\log\frac{1}{
\widetilde\mu_{j,K}(B_{d_{j,K}}(t,2u))
}
&\le
\log\frac{2}{\mu_j(B_{d_j}(t,u))}=
\log\frac{1}{\mu_j(B_{d_j}(t,u))}
+
\log2.
\end{align*}
For every $x,y\ge0$, recall the definition of $c_{\alpha_j}^{(0)}$, by direct algebra, we have
$(x+y)^{p_j}
\le
c_{\alpha_j}^{(0)}
\rbr{x^{p_j}+y^{p_j}}$.
Combining the preceding inequalities, we obtain
\begin{align*}
\int_0^{4v_j(t)}
\left(
\log\frac{1}{
\widetilde\mu_{j,K}(B_{d_{j,K}}(t,r))
}
\right)^{p_j}dr&\le
2c_{\alpha_j}^{(0)}
\int_0^{2v_j(t)}
\left(
\log\frac{1}{\mu_j(B_{d_j}(t,u))}
\right)^{p_j}du
+
4c_{\alpha_j}^{(0)}
(\log2)^{p_j}v_j(t)\\
&\le
2c_{\alpha_j}^{(0)}\Phi_j(t)
+
4c_{\alpha_j}^{(0)}
(\log2)^{p_j}v_j(t),
\end{align*}
where we used $2v_j(t)\le4v_j(t)$ in the final inequality. This
proves the assertion for the fixed $j$. Applying the same construction
to every $j\in[m]$ finishes the proof.
\end{proof}

We next construct, for each pseudo-metric, an admissible sequence whose
chaining cost remains pointwise rather than being replaced by a worst-case
supremum over $K$.

\begin{lemma}
\label{lem:multi_regime_pointwise_measure_chain}
Let $K\subseteq T$ be finite and contain $t_0$. For every
$j\in\cbr{1,\ldots,m}$, there exists an admissible sequence
$\mathcal T_j=(T_{j,k})_{k\ge0}$ of subsets of $K$ such that
$T_{j,0}=\cbr{t_0}$ and $T_{j,k}=K$ for all sufficiently large $k$.
Moreover, for every $t\in K$,
\begin{align}
G_{j,K}(t)
:=
\sum_{k\ge0}2^{k/\alpha_j}d_j(t,T_{j,k})
\le
C_{\alpha_j}^{(1)}
\rbr{\Phi_j(t)+v_j(t)},
\label{eq:multi_regime_metric_chain_cost}
\end{align}
where $C_{\alpha_j}^{(1)}<\infty$ depends only on $\alpha_j$.
\end{lemma}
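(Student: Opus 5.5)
This lemma is the $m$-regime analogue of Lemma \ref{lem:mixed_tail_pointwise_measure_chain}. Its proof uses only one pseudo-metric at a time, so the plan is to rerun that construction for each fixed $j\in\cbr{1,\ldots,m}$. Write $p_j:=1/\alpha_j$ and let $\widetilde\mu_{j,K}$ be the measure from Lemma \ref{lem:multi_regime_ambient_restriction}. This measure has an atom of mass at least $\frac12$ at $t_0$ and satisfies the integral comparison \eqref{eq:multi_regime_ambient_integral_comparison}.

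\textbf{Mass radii and disjoint nets.} For $t\in K$ and $n\ge0$, define the mass radius $r_{j,n}(t)$ as the smallest value in $\cbr{d_j(t,s):s\in K}$ for which $\widetilde\mu_{j,K}(B_{d_{j,K}}(t,r))\ge2^{-2^n}$. This radius exists because $K$ is finite, and it is nonincreasing in $n$. Build $S_{j,n}$ greedily: scan $K$ in order of nondecreasing $r_{j,n}$, and keep a point $t$ only if $d_j(t,s)>r_{j,n}(t)+r_{j,n}(s)$ for every point $s$ already kept. The triangle inequality shows that the balls $B_{d_{j,K}}(s,r_{j,n}(s))$, $s\in S_{j,n}$, are pairwise disjoint. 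Each has mass at least $2^{-2^n}$, so $|S_{j,n}|\le2^{2^n}$. The rejection rule gives $d_j(t,S_{j,n})\le2r_{j,n}(t)$ for every $t\in K$.

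\textbf{Admissible sequence.} Set $T_{j,0}=\cbr{t_0}$ and $T_{j,k}=S_{j,k-1}$ for $k\ge1$; this is admissible since $2^{2^{k-1}}\le2^{2^k}$. From some level onward, replace $T_{j,k}$ by $K$. This preserves admissibility once $2^{2^k}\ge|K|$, and it only decreases the distances $d_j(t,T_{j,k})$. The chain cost then satisfies
\[
G_{j,K}(t)\le v_j(t)+2^{1+p_j}\sum_{n\ge0}2^{np_j}r_{j,n}(t).
\]

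\textbf{Main step: bounding the weighted radii by the integral.} On $[r_{j,n+1}(t),r_{j,n}(t))$, the integrand $F_{j,t}(r)=(\log 1/\widetilde\mu_{j,K}(B_{d_{j,K}}(t,r)))^{p_j}$ is at least $(\log2)^{p_j}2^{np_j}$. Hence
\[
\int_0^{r_{j,0}(t)}F_{j,t}\,dr\ge(\log2)^{p_j}\sum_{n\ge0}2^{np_j}\bigl(r_{j,n}(t)-r_{j,n+1}(t)\bigr).
\]
If the integral is infinite there is nothing to prove. If it is finite, the ball of radius $0$ has positive mass, so $r_{j,n}(t)=0$ eventually. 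Summation by parts then gives
\[
\sum_{n\ge0}2^{np_j}r_{j,n}(t)\le\frac{\int_0^{r_{j,0}(t)}F_{j,t}\,dr}{(\log2)^{p_j}(1-2^{-p_j})}.
\]

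\textbf{Truncation at the anchor radius.} The atom at $t_0$ has mass at least $\frac12=2^{-2^0}$, and the ball of radius $v_j(t)$ around $t$ contains $t_0$, so $r_{j,0}(t)\le v_j(t)\le4v_j(t)$. Applying \eqref{eq:multi_regime_ambient_integral_comparison} bounds the integral by $2c^{(0)}_{\alpha_j}\Phi_j(t)+4c^{(0)}_{\alpha_j}(\log2)^{p_j}v_j(t)$. Collecting constants yields \eqref{eq:multi_regime_metric_chain_cost}, with $C^{(1)}_{\alpha_j}$ depending only on $\alpha_j$.

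I expect the main obstacle to be the summation-by-parts step, specifically the zero-mass edge case where $\widetilde\mu_{j,K}(B_{d_{j,K}}(t,0))=0$. Everything else transfers verbatim from the two-metric proof. The construction is deterministic and depends on $j$ alone, so doing it separately for each $j\in\cbr{1,\ldots,m}$ completes the proof.
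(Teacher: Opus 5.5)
Your proposal is correct and follows the paper's proof step for step: the restricted measure $\widetilde\mu_{j,K}$, the mass radii $r_{j,n}(t)$, the greedy disjoint-ball nets $S_{j,n}$, the shifted admissible sequence $T_{j,k}=S_{j,k-1}$ eventually replaced by $K$, summation by parts, and the truncation $r_{j,0}(t)\le v_j(t)$ via the anchor atom. The zero-mass edge case you flag is handled as in the paper: if $\widetilde\mu_{j,K}(B_{d_{j,K}}(t,0))=0$, then $r_{j,0}(t)>0$ and the integrand is infinite on a nondegenerate initial subinterval of $[0,r_{j,0}(t)]$, so the integral comparison forces $\Phi_j(t)=\infty$ and the bound holds trivially.
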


\begin{proof}[Proof of Lemma
\ref{lem:multi_regime_pointwise_measure_chain}]
Fix $j\in\cbr{1,\ldots,m}$, put
$p_j:=1/\alpha_j$, and let $\widetilde\mu_{j,K}$ be the probability
measure supplied by Lemma \ref{lem:multi_regime_ambient_restriction}.
For $t\in K$ and $n\ge0$, define
\begin{align*}
r_{j,n}(t)
:=
\min\cbr{
r\in\cbr{d_j(t,s):s\in K}:
\widetilde\mu_{j,K}(B_{d_{j,K}}(t,r))
\ge2^{-2^n}
}.
\end{align*}
The minimum exists because $K$ is finite and the largest
$d_{j,K}$-ball centered at $t$ has mass one. Since the mass threshold
$2^{-2^n}$ is nonincreasing in $n$, the sequence
$(r_{j,n}(t))_{n\ge0}$ is nonincreasing.

For a fixed $n\ge0$, we order the points of $K$ by nondecreasing
$r_{j,n}(t)$, breaking ties according to an arbitrary fixed enumeration
of $K$. Traverse this ordered list and retain a point $t$ precisely when
\begin{align*}
d_j(t,s)
>
r_{j,n}(t)+r_{j,n}(s)
\end{align*}
for every previously retained point $s$. Denote the set of retained
points by $S_{j,n}$.

 Similar to Appendix \ref{app:proofs_mixed_tail}, we first prove that the balls
$\cbr{
B_{d_{j,K}}(s,r_{j,n}(s)):
s\in S_{j,n}
}$
are pairwise disjoint. Suppose that two such balls, centered at distinct
points $s,t\in S_{j,n}$, intersect. Then there exists $z\in K$ such
that
$d_j(s,z)\le r_{j,n}(s),
\ 
d_j(t,z)\le r_{j,n}(t)$.

The triangle inequality would then give
\begin{align*}
d_j(s,t)
\le
d_j(s,z)+d_j(z,t)
\le
r_{j,n}(s)+r_{j,n}(t),
\end{align*}
 which contradicts the greedy retention rule.

By the definition of $r_{j,n}(s)$, every one of these disjoint balls
has $\widetilde\mu_{j,K}$-mass at least $2^{-2^n}$. Hence,
\begin{align*}
1
\ge
\sum_{s\in S_{j,n}}
\widetilde\mu_{j,K}(B_{d_{j,K}}(s,r_{j,n}(s)))
\ge
|S_{j,n}|2^{-2^n}.
\end{align*}
Thus, we have $|S_{j,n}|
\le
2^{2^n}$.
We next prove that $S_{j,n}$ approximates every point at its own mass
radius. If $t\in S_{j,n}$, then $d_j(t,S_{j,n})=0$. If
$t\notin S_{j,n}$, the point $t$ was rejected because there exists a
previously retained point $s\in S_{j,n}$ such that
\begin{align*}
r_{j,n}(s)\le r_{j,n}(t),
\ 
d_j(t,s)
\le
r_{j,n}(t)+r_{j,n}(s).
\end{align*}
Therefore, in both cases, we have
\begin{align}
d_j(t,S_{j,n})
\le
2r_{j,n}(t),
\ t\in K.
\label{eq:multi_regime_mass_net_approximation}
\end{align}

For $t\in K$, define
\begin{align*}
F_{j,t}(r)
:=
\left(
\log\frac{1}{
\widetilde\mu_{j,K}(B_{d_{j,K}}(t,r))
}
\right)^{p_j}.
\end{align*}
If $r<r_{j,n}(t)$, by the definition of $r_{j,n}(t)$, we have
$\widetilde\mu_{j,K}(B_{d_{j,K}}(t,r))
<
2^{-2^n}$ and hence
\begin{align}
F_{j,t}(r)
\ge
(\log2)^{p_j}2^{np_j}.
\label{eq:multi_regime_mass_integrand_lower_bound}
\end{align}
Since the intervals $[r_{j,n+1}(t),r_{j,n}(t)),
\ n\ge0$ are pairwise disjoint and contained in $[0,r_{j,0}(t)]$, inequality
\eqref{eq:multi_regime_mass_integrand_lower_bound} gives
\begin{align}
\int_0^{r_{j,0}(t)}F_{j,t}(r)dr
&\ge
(\log2)^{p_j}
\sum_{n\ge0}2^{np_j}
\rbr{r_{j,n}(t)-r_{j,n+1}(t)}.
\label{eq:multi_regime_mass_integral_radius_differences}
\end{align}

We compare the last sum with the weighted sum of the mass radii. If the
integral on the left-hand side is infinite, the desired estimate is
immediate. Suppose that it is finite. Then we have $\widetilde\mu_{j,K}(B_{d_{j,K}}(t,0))>0$.
Indeed, if this mass were zero, the finiteness of $K$ would imply that
$F_{j,t}$ is infinite on an interval of positive length. Since
$2^{-2^n}\downarrow0$, the positive mass at radius zero implies that
$r_{j,n}(t)=0$ for all sufficiently large $n$. Summation by parts is
therefore justified and yields
\begin{align*}
\sum_{n\ge0}2^{np_j}
\rbr{r_{j,n}(t)-r_{j,n+1}(t)}=
\rbr{1-2^{-p_j}}
\sum_{n\ge0}2^{np_j}r_{j,n}(t)
+
2^{-p_j}r_{j,0}(t)\ge
\rbr{1-2^{-p_j}}
\sum_{n\ge0}2^{np_j}r_{j,n}(t).
\end{align*}
Combining this identity with
\eqref{eq:multi_regime_mass_integral_radius_differences}, we obtain
\begin{align}
\sum_{n\ge0}2^{np_j}r_{j,n}(t)
\le
\frac{1}{
(\log2)^{p_j}(1-2^{-p_j})
}
\int_0^{r_{j,0}(t)}F_{j,t}(r)dr.
\label{eq:multi_regime_weighted_mass_radius_bound}
\end{align}
We now define the admissible sequence. Let 
\begin{align*}
T_{j,0}:=\cbr{t_0},
\ 
T_{j,k}:=S_{j,k-1},
\ k\ge1.
\end{align*}
By the property of $S_{j,k}$, we know that $|T_{j,k}|\le
2^{2^{k-1}}\le
2^{2^k}$,
so the sequence is admissible. Since $K$ is finite, choose
$k_K\ge1$ such that $2^{2^{k_K}}\ge|K|$, and replace $T_{j,k}$ by
$K$ for every $k\ge k_K$. This replacement only decreases
$d_j(t,T_{j,k})$, preserves admissibility, and ensures that the
sequence is eventually equal to $K$.

Since $T_{j,0}=\cbr{t_0}$,  $d_j(t,T_{j,0})
=
v_j(t)$. Moreover, by \eqref{eq:multi_regime_mass_net_approximation}, for every
$k\ge1$, we have
\begin{align*}
d_j(t,T_{j,k})
\le
2r_{j,k-1}(t).
\end{align*}
The same inequality remains valid at levels at which $T_{j,k}$ was
replaced by $K$, because its left-hand side is then zero. Consequently, we obtain
\begin{align*}
G_{j,K}(t)
&=
\sum_{k\ge0}2^{kp_j}d_j(t,T_{j,k})\le
v_j(t)
+
2\sum_{k\ge1}2^{kp_j}r_{j,k-1}(t)=
v_j(t)
+
2^{1+p_j}
\sum_{n\ge0}2^{np_j}r_{j,n}(t).
\end{align*}
Using \eqref{eq:multi_regime_weighted_mass_radius_bound}, we conclude
that
\begin{align}
G_{j,K}(t)
&\le
v_j(t)
+
\kappa_{\alpha_j}
\int_0^{r_{j,0}(t)}F_{j,t}(r)dr,
\label{eq:multi_regime_chain_cost_restricted_integral}
\end{align}
where $\kappa_{\alpha_j}
:=
\frac{2^{1+1/\alpha_j}}{
(\log2)^{1/\alpha_j}(1-2^{-1/\alpha_j})
}$.

The atom at the anchor gives the required pointwise truncation. The
closed ball $B_{d_{j,K}}(t,v_j(t))$ contains $t_0$, and $\widetilde\mu_{j,K}(\cbr{t_0})
\ge
\frac12
=
2^{-2^0}$. Therefore, the definition of $r_{j,0}(t)$ implies
\begin{align*}
r_{j,0}(t)
\le
v_j(t).
\end{align*}
It follows from Lemma \ref{lem:multi_regime_ambient_restriction} that
\begin{align*}
\int_0^{r_{j,0}(t)}F_{j,t}(r)dr
&\le
\int_0^{4v_j(t)}F_{j,t}(r)dr\le
2c_{\alpha_j}^{(0)}\Phi_j(t)
+
4c_{\alpha_j}^{(0)}
(\log2)^{p_j}v_j(t).
\end{align*}
Inserting this estimate into
\eqref{eq:multi_regime_chain_cost_restricted_integral}, we obtain
\begin{align*}
G_{j,K}(t)
&\le
2\kappa_{\alpha_j}c_{\alpha_j}^{(0)}\Phi_j(t)+
\left[
1+4\kappa_{\alpha_j}c_{\alpha_j}^{(0)}(\log2)^{p_j}
\right]v_j(t).
\end{align*}
Thus, \eqref{eq:multi_regime_metric_chain_cost} holds with
\begin{align*}
C_{\alpha_j}^{(1)}
:=
\max\cbr{
2\kappa_{\alpha_j}c_{\alpha_j}^{(0)},
1+4\kappa_{\alpha_j}c_{\alpha_j}^{(0)}(\log2)^{1/\alpha_j}
}.
\end{align*}
The construction is deterministic and the inequality holds
simultaneously for every $t\in K$. Repeating the construction for all
$j\in\cbr{1,\ldots,m}$ and we finish the proof.
\end{proof}

We now combine the $m$ admissible sequences. The common refinement is
essential because the increment condition controls the sum of all metric
contributions for one process and one common deviation parameter.

\begin{lemma}
\label{lem:multi_regime_finite_pointwise}
Let $K\subseteq T$ be finite and contain $t_0$. For every
$j\in\cbr{1,\ldots,m}$, let
$\mathcal T_j=(T_{j,k})_{k\ge0}$ be an admissible sequence of subsets
of $K$ such that $T_{j,0}=\cbr{t_0}$ and $T_{j,k}=K$ for all
sufficiently large $k$. Define
\begin{align*}
G_j(t)
:=
\sum_{k\ge0}2^{k/\alpha_j}d_j(t,T_{j,k}),
\qquad t\in K.
\end{align*}
Let $b_m
:=
1+\ceil{\log_2m}$. There exists a finite constant
$A_{m,\boldsymbol\alpha}$, depending only on $m$ and
$\boldsymbol\alpha$, such that, for every $u\ge1$, with probability
at least $1-e^{-u}$, simultaneously for every $t\in K$,
\begin{align}
\|Z_t\|
\le
A_{m,\boldsymbol\alpha}
\left[
\sum_{j=1}^mG_j(t)
+
\sum_{j=1}^m u^{1/\alpha_j}v_j(t)
\right].
\label{eq:multi_regime_finite_chain_bound}
\end{align}
More explicitly, the proof permits the choice
\begin{align}
A_{m,\boldsymbol\alpha}
:=
2c_{\boldsymbol\alpha}^{(16)}
\rbr{1+2a_{\boldsymbol\alpha}}
M_{m,\boldsymbol\alpha},
\label{eq:multi_regime_explicit_finite_constant}
\end{align}
where
\begin{align*}
a_{\boldsymbol\alpha}
:=
\max_{1\le j\le m}2^{1/\alpha_j},
\qquad
c_{\boldsymbol\alpha}^{(16)}
:=
\max_{1\le j\le m}16^{1/\alpha_j},
\qquad
M_{m,\boldsymbol\alpha}
:=
\max_{1\le j\le m}2^{b_m/\alpha_j}.
\end{align*}
\end{lemma}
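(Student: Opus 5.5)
The proof follows the two-metric argument of Lemma \ref{lem:mixed_tail_finite_pointwise} almost verbatim. The only change is that the label shift $n-2$ is replaced by $n-b_m$, so that the common refinement of $m$ chains remains admissible.

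\textbf{Common refinement.} For each $j$ and $k$ we fix nearest-point maps $p_{j,k}:K\to T_{j,k}$, taking $p_{j,k}(t)=t$ once $T_{j,k}=K$. For $n\ge b_m$ we declare $s\sim_n t$ iff $p_{j,k}(s)=p_{j,k}(t)$ for all $j\in\{1,\ldots,m\}$ and all $0\le k\le n-b_m$. This gives nested partitions $\mathcal A_n$. Since $m\le 2^{b_m-1}$,
\[
|\mathcal A_n|\le 2^{m\sum_{k=0}^{n-b_m}2^k}\le 2^{m2^{n-b_m+1}}\le 2^{2^n}.
\]
Set
\[
R_n(t):=\sum_j 2^{n/\alpha_j}d_j(t,T_{j,n-b_m}),
\]
and let $\pi_n(t)$ be an $R_n$-minimizer in the cell $\mathcal A_n(t)$. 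Because $t$ and $\pi_n(t)$ share the level-$(n-b_m)$ labels, the triangle inequality gives
\[
\sum_j 2^{n/\alpha_j}d_j(t,\pi_n(t))\le 2R_n(t).
\]
Using $2^{n/\alpha_j}\le a_{\boldsymbol\alpha}2^{(n-1)/\alpha_j}$, the edge bound becomes
\[
\sum_j 2^{n/\alpha_j}d_j(\pi_n(t),\pi_{n-1}(t))\le 2R_n(t)+2a_{\boldsymbol\alpha}R_{n-1}(t).
\]
The edge set $E_n$ has at most $|\mathcal A_n|\le2^{2^n}$ elements, and $\pi_n(t)=t$ eventually.

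\textbf{Probabilistic step.} First take $u\ge 2^{b_m}$ and set $\ell=\lfloor\log_2 u\rfloor\ge b_m$, so that $\pi_\ell$ is defined. Apply the increment condition \eqref{eq:multi_regime_increment_condition} with parameter $16u$ to the coarse edges $(\pi_\ell(t),t_0)$, and with parameter $16\cdot2^n$ to the edges in $E_n$ for $n>\ell$. Union bounds exactly as in the two-metric case show the total failure probability is at most $2e^{-15u}+2e^{-15u}/(1-e^{-15u})\le e^{-u}$.

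On the good event we telescope, bounding $d_j(\pi_\ell(t),t_0)\le d_j(t,\pi_\ell(t))+v_j(t)$ and $u^{1/\alpha_j}\le a_{\boldsymbol\alpha}2^{\ell/\alpha_j}$. This gives
\[
\|Z_t\|\le c^{(16)}_{\boldsymbol\alpha}\Big[\sum_j u^{1/\alpha_j}v_j(t)+2(1+2a_{\boldsymbol\alpha})\sum_{n\ge b_m}R_n(t)\Big].
\]
The coarse representative contributes $2a_{\boldsymbol\alpha}R_\ell$, and the fine edges contribute $2R_n+2a_{\boldsymbol\alpha}R_{n-1}$, so $1+2a$ is a safe multiplier. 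Finally, shifting the index $k=n-b_m$ gives the exact identity
\[
\sum_{n\ge b_m}R_n(t)=\sum_j 2^{b_m/\alpha_j}G_j(t)\le M_{m,\boldsymbol\alpha}\sum_j G_j(t),
\]
which yields \eqref{eq:multi_regime_finite_chain_bound} with the constant \eqref{eq:multi_regime_explicit_finite_constant}, up to the small-$u$ adjustment below.

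\textbf{Small $u$.} For $1\le u<2^{b_m}$ we apply the large-$u$ case with parameter $2^{b_m}$. The failure probability is at most $e^{-2^{b_m}}\le e^{-u}$. The price is $(2^{b_m})^{1/\alpha_j}v_j\le M_{m,\boldsymbol\alpha}u^{1/\alpha_j}v_j$, which is absorbed into the constant using $u\ge1$. The explicit form \eqref{eq:multi_regime_explicit_finite_constant} must then bound the $v_j$-term after multiplying by $M_{m,\boldsymbol\alpha}$, and it does, since $M_{m,\boldsymbol\alpha}$ already appears as a factor there.

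\textbf{Main obstacle.} The delicate part is the bookkeeping, not the idea. Three things need care: the cardinality bound $m2^{n-b_m+1}\le2^n$; keeping every chaining index at least $b_m$ so that $\pi_\ell$ is defined; and checking that the explicit constant simultaneously covers the coarse step, the fine steps, and the small-$u$ rescaling without picking up an extra factor.
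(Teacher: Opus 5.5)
Your proposal is correct and follows the paper's proof essentially step for step. Both use the same $b_m$-shifted common refinement of nearest-point labels, the same $R_n$-minimizing cell representatives with the $2R_n+2a_{\boldsymbol\alpha}R_{n-1}$ edge bound, the same $16u$ / $16\cdot 2^n$ union bounds for $u\ge 2^{b_m}$ followed by the index-shift identity $\sum_{n\ge b_m}R_n=\sum_j 2^{b_m/\alpha_j}G_j$, and the same reduction of $1\le u<2^{b_m}$ to $u_0=2^{b_m}$ at the cost of the factor $M_{m,\boldsymbol\alpha}$.
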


\begin{proof}[Proof of Lemma \ref{lem:multi_regime_finite_pointwise}]
For $j\in\cbr{1,\ldots,m}$, $k\ge0$, and $t\in K$, choose a
nearest point $p_{j,k}(t)\in T_{j,k}$ such that
\begin{align*}
d_j(t,p_{j,k}(t))
=
d_j(t,T_{j,k}).
\end{align*}
Such a point exists because $T_{j,k}$ is a nonempty finite set. When
$T_{j,k}=K$, we choose $p_{j,k}(t):=t$.

For every $n\ge b_m$, define an equivalence relation on $K$ by
\begin{align*}
s\sim_n t
\quad\text{if and only if}\quad
p_{j,k}(s)=p_{j,k}(t)
\end{align*}
for every $j\in\cbr{1,\ldots,m}$ and every
$0\le k\le n-b_m$. Let $\mathcal A_n$ be the corresponding
partition of $K$, and write $\mathcal A_n(t)$ for the cell containing
$t$. Equivalently, two points belong to the same cell precisely when
all the entries of $P_n(t)
:=
\rbr{
p_{j,k}(t):
1\le j\le m,\ 0\le k\le n-b_m
}$
agree.

The partitions are nested. Indeed, agreement of all nearest-point labels
through level $n-b_m$ implies agreement through level
$n-1-b_m$. Hence, for every $n\ge b_m+1$,
\begin{align*}
s\sim_n t
\quad\Longrightarrow\quad
s\sim_{n-1}t,
\end{align*}
so $\mathcal A_n$ refines $\mathcal A_{n-1}$.

We next bound the number of cells. By admissibility of the $m$
sequences,
\begin{align*}
|\mathcal A_n|\le
\prod_{j=1}^m
\prod_{k=0}^{n-b_m}|T_{j,k}|\le
\prod_{j=1}^m
\prod_{k=0}^{n-b_m}2^{2^k}=
2^{m\sum_{k=0}^{n-b_m}2^k}=
2^{m(2^{n-b_m+1}-1)}.
\end{align*}
By the definition of $b_m$, we have that $m
\le
2^{b_m-1}$, which leads to
\begin{align}
|\mathcal A_n|
\le
2^{m2^{n-b_m+1}}
\le
2^{2^n}.
\label{eq:multi_regime_A_n_bound}
\end{align}
This is the step for which the level shift $b_m$ is required.

For every $n\ge b_m$, define the combined pointwise approximation cost
\begin{align*}
R_n(t)
:=
\sum_{j=1}^m
2^{n/\alpha_j}d_j(t,T_{j,n-b_m}).
\end{align*}
For every cell $A\in\mathcal A_n$, choose a point
$r_n(A)
\in
\operatorname*{argmin}_{s\in A}R_n(s)$ and define $\pi_n(t)
:=
r_n(\mathcal A_n(t))$.
Thus, $\pi_n$ is constant on every cell of $\mathcal A_n$.
Since $t$ and $\pi_n(t)$ belong to the same cell, for every
$j\in\cbr{1,\ldots,m}$, we have
\begin{align*}
p_{j,n-b_m}(t)
=
p_{j,n-b_m}(\pi_n(t)).
\end{align*}
The triangle inequality therefore gives
\begin{align*}
d_j(t,\pi_n(t))
&\le
d_j(t,p_{j,n-b_m}(t))
+
d_j(p_{j,n-b_m}(\pi_n(t)),\pi_n(t))=
d_j(t,T_{j,n-b_m})
+
d_j(\pi_n(t),T_{j,n-b_m}).
\end{align*}
Multiplying by $2^{n/\alpha_j}$ and summing over all $j$, we obtain
\begin{align*}
\sum_{j=1}^m
2^{n/\alpha_j}d_j(t,\pi_n(t))
\le
R_n(t)+R_n(\pi_n(t)).
\end{align*}
Since $\pi_n(t)$ minimizes the distance, we have $R_n(\pi_n(t))
\le
R_n(t)$ and hence
\begin{align}
\sum_{j=1}^m
2^{n/\alpha_j}d_j(t,\pi_n(t))
\le
2R_n(t).
\label{eq:multi_regime_common_representative_bound}
\end{align}

For every $n\ge b_m+1$, again, applying the triangle inequality and we have
\begin{align*}
d_j(\pi_n(t),\pi_{n-1}(t))
\le
d_j(\pi_n(t),t)
+
d_j(t,\pi_{n-1}(t)).
\end{align*}
We use \eqref{eq:multi_regime_common_representative_bound} at level
$n$ controls the first part. For the second part, notice that
$2^{n/\alpha_j}
\le
a_{\boldsymbol\alpha}2^{(n-1)/\alpha_j}$. Applying \eqref{eq:multi_regime_common_representative_bound} at level
$n-1$, we obtain
\begin{align*}
\sum_{j=1}^m
2^{n/\alpha_j}d_j(t,\pi_{n-1}(t))
&\le
a_{\boldsymbol\alpha}
\sum_{j=1}^m
2^{(n-1)/\alpha_j}d_j(t,\pi_{n-1}(t))\le
2a_{\boldsymbol\alpha}R_{n-1}(t).
\end{align*}
Therefore,
\begin{align}
\sum_{j=1}^m
2^{n/\alpha_j}
d_j(\pi_n(t),\pi_{n-1}(t))
\le
2R_n(t)
+
2a_{\boldsymbol\alpha}R_{n-1}(t).
\label{eq:multi_regime_common_parent_edge_bound}
\end{align}

Because every nearest-point map is eventually the identity, the
partitions $\mathcal A_n$ are eventually the singleton partition of
$K$. Hence, for every $t\in K$, we have
$\pi_n(t)=t$ for all sufficiently large $n$.

For every $n\ge b_m+1$, define the set of distinct parent-child edges
\begin{align*}
E_n
:=
\cbr{
(\pi_n(t),\pi_{n-1}(t)):
t\in K
}.
\end{align*}
Since $\mathcal A_n$ refines $\mathcal A_{n-1}$, every cell of
$\mathcal A_n$ is contained in a unique cell of
$\mathcal A_{n-1}$. Both $\pi_n$ and $\pi_{n-1}$ are constant on a
cell of $\mathcal A_n$. Thus, every cell of $\mathcal A_n$ determines
at most one edge in $E_n$, and
\begin{align}
|E_n|
\le
|\mathcal A_n|
\le
2^{2^n}.
\label{eq:multi_regime_common_edge_cardinality}
\end{align}

We first suppose that $u\ge2^{b_m}$ and define $\ell:=\floor{\log_2u}$.
Then, we have
\begin{align*}
\ell\ge b_m,
\ 
2^\ell\le u<2^{\ell+1}.
\end{align*}
Apply the increment condition with parameter $16u$ to all
distinct coarse edges
\begin{align*}
\cbr{(\pi_\ell(t),t_0):t\in K}.
\end{align*}
The number of distinct points $\pi_\ell(t)$ is at most
$|\mathcal A_\ell|$. Hence, by a union bound and \eqref{eq:multi_regime_A_n_bound}, we have
\begin{align*}
&\PP\left\{
\exists t\in K:
\|Z_{\pi_\ell(t)}-Z_{t_0}\|
>
\sum_{j=1}^m(16u)^{\frac{1}{\alpha_j}}
d_j(\pi_\ell(t),t_0)
\right\}\le
2|\mathcal A_\ell|e^{-16u}\le
2e^{-((\log2)2^\ell-16u)}\le
2e^{-15u}.
\end{align*}

For every $n>\ell$, apply the increment condition with parameter
$16\cdot2^n$ to every edge in $E_n$. By
\eqref{eq:multi_regime_common_edge_cardinality} and another union
bound,
\begin{align*}
&\PP\left\{
\exists(s,r)\in E_n:\
\|Z_s-Z_r\|
>
\sum_{j=1}^m(16\cdot2^n)^{\frac{1}{\alpha_j}}d_j(s,r)
\right\}\le
2|E_n|e^{-16\cdot2^n}\le
2e^{-((16-\log2)2^n)}\le
2e^{-15\cdot2^n}.
\end{align*}
Since $2^{\ell+1}>u$, direct algebra gives
\begin{align*}
\sum_{n>\ell}2e^{-15\cdot2^n}\le
2\sum_{r\ge1}e^{-15u2^{r-1}}\le
2\sum_{r\ge1}e^{-15ur}=
\frac{2e^{-15u}}{1-e^{-15u}}.
\end{align*}
Because $u\ge2^{b_m}\ge2$, by some algebra, we have
\begin{align*}
2e^{-15u}
+
\frac{2e^{-15u}}{1-e^{-15u}}
\le
e^{-u}.
\end{align*}
Consequently, there exists an event $\mathcal E_u$ with
$\PP(\mathcal E_u)\ge1-e^{-u}$ on which all the preceding coarse and
fine increment inequalities hold simultaneously.

Fix $t\in K$ and we work conditioning on $\mathcal E_u$. Since
$\pi_n(t)=t$ for all sufficiently large $n$ and $Z_{t_0}=0$, we may
telescope and obtain
\begin{align*}
\|Z_t\|
&\le
\|Z_{\pi_\ell(t)}-Z_{t_0}\|
+
\sum_{n>\ell}
\|Z_{\pi_n(t)}-Z_{\pi_{n-1}(t)}\|\\
&\le
\sum_{j=1}^m(16u)^{1/\alpha_j}
d_j(\pi_\ell(t),t_0)+
\sum_{n>\ell}
\sum_{j=1}^m
(16\cdot2^n)^{1/\alpha_j}
d_j(\pi_n(t),\pi_{n-1}(t)).
\end{align*}

We bound the coarse term first. By the triangle inequality, we have
\begin{align*}
d_j(\pi_\ell(t),t_0)
\le
d_j(\pi_\ell(t),t)+v_j(t).
\end{align*}
When $u<2^{\ell+1}$, we have that $u^{1/\alpha_j}
<
2^{1/\alpha_j}2^{\ell/\alpha_j}
\le
a_{\boldsymbol\alpha}2^{\ell/\alpha_j}$.
Using \eqref{eq:multi_regime_common_representative_bound}, we get
\begin{align*}
\sum_{j=1}^m
(16u)^{1/\alpha_j}
d_j(t,\pi_\ell(t))\le
c_{\boldsymbol\alpha}^{(16)}a_{\boldsymbol\alpha}
\sum_{j=1}^m
2^{\ell/\alpha_j}d_j(t,\pi_\ell(t))\le
2c_{\boldsymbol\alpha}^{(16)}a_{\boldsymbol\alpha}R_\ell(t).
\end{align*}
The second part satisfies
\begin{align*}
\sum_{j=1}^m
(16u)^{1/\alpha_j}v_j(t)
\le
c_{\boldsymbol\alpha}^{(16)}
\sum_{j=1}^m u^{1/\alpha_j}v_j(t).
\end{align*}

For the fine increments, inequality
\eqref{eq:multi_regime_common_parent_edge_bound} gives
\begin{align*}
&\sum_{n>\ell}
\sum_{j=1}^m
(16\cdot2^n)^{1/\alpha_j}
d_j(\pi_n(t),\pi_{n-1}(t))\le
c_{\boldsymbol\alpha}^{(16)}
\sum_{n>\ell}
\cbr{
2R_n(t)+2a_{\boldsymbol\alpha}R_{n-1}(t)
}\le
2c_{\boldsymbol\alpha}^{(16)}
\rbr{1+a_{\boldsymbol\alpha}}
\sum_{n\ge b_m}R_n(t).
\end{align*}
Combining the coarse and fine estimates, and using
$R_\ell(t)\le\sum_{n\ge b_m}R_n(t)$, we obtain
\begin{align}
\|Z_t\|
&\le
c_{\boldsymbol\alpha}^{(16)}
\sum_{j=1}^m u^{1/\alpha_j}v_j(t)
+
2c_{\boldsymbol\alpha}^{(16)}
\rbr{1+2a_{\boldsymbol\alpha}}
\sum_{n\ge b_m}R_n(t).
\label{eq:multi_regime_common_chain_preconversion}
\end{align}

By the definition of $R_n(t)$ and the change of variables
$k=n-b_m$, we have the exact identity
\begin{align*}
\sum_{n\ge b_m}R_n(t)=&
\sum_{n\ge b_m}
\sum_{j=1}^m
2^{n/\alpha_j}d_j(t,T_{j,n-b_m})=
\sum_{j=1}^m
2^{b_m/\alpha_j}
\sum_{k\ge0}2^{k/\alpha_j}d_j(t,T_{j,k})=
\sum_{j=1}^m
2^{b_m/\alpha_j}G_j(t)\\
\le&
M_{m,\boldsymbol\alpha}
\sum_{j=1}^mG_j(t).
\end{align*}
In the last inequality, we use $M_{m,\boldsymbol\alpha}=
\max_{1\le j\le m}2^{b_m/\alpha_j}$. Inserting this identity into
\eqref{eq:multi_regime_common_chain_preconversion}, and observing that
the constant in \eqref{eq:multi_regime_explicit_finite_constant}
dominates both resulting coefficients, we prove
\eqref{eq:multi_regime_finite_chain_bound} when
$u\ge2^{b_m}$.

It remains to consider $1\le u<2^{b_m}$. Apply the result already
proved with $u_0:=2^{b_m}$.
Its failure probability satisfies
$e^{-u_0}
\le
e^{-u}$.
Moreover, since $u\ge1$,
\begin{align*}
u_0^{1/\alpha_j}
=
2^{b_m/\alpha_j}
\le
M_{m,\boldsymbol\alpha}u^{1/\alpha_j}.
\end{align*}
The high-$u$ estimate before the final absorption therefore yields
\begin{align*}
\|Z_t\|
&\le
c_{\boldsymbol\alpha}^{(16)}
M_{m,\boldsymbol\alpha}
\sum_{j=1}^m u^{1/\alpha_j}v_j(t)+
2c_{\boldsymbol\alpha}^{(16)}
\rbr{1+2a_{\boldsymbol\alpha}}
M_{m,\boldsymbol\alpha}
\sum_{j=1}^mG_j(t).
\end{align*}
This is bounded by the right-hand side of
\eqref{eq:multi_regime_finite_chain_bound} with the constant in
\eqref{eq:multi_regime_explicit_finite_constant}. Thus, we complete the proof.
\end{proof}

Combining the deterministic measure chains with the preceding
probabilistic lemma gives the finite ambient-measure estimate used in the
proof of the theorem.

\begin{lemma}
\label{lem:multi_regime_finite_ambient_pointwise}
There exists a finite constant
$B_{m,\boldsymbol\alpha}$, depending only on $m$ and
$\boldsymbol\alpha$, such that, for every finite set
$K\subseteq T$ containing $t_0$ and every $u\ge1$, with probability
at least $1-e^{-u}$, simultaneously for every $t\in K$,
\begin{align}
\|Z_t\|
\le
B_{m,\boldsymbol\alpha}
\sum_{j=1}^m
\cbr{
\Phi_j(t)+u^{1/\alpha_j}v_j(t)
}.
\label{eq:multi_regime_finite_ambient_bound}
\end{align}
\end{lemma}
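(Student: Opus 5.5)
The plan is to combine the deterministic measure-generated chains of Lemma \ref{lem:multi_regime_pointwise_measure_chain} with the probabilistic common-refinement estimate of Lemma \ref{lem:multi_regime_finite_pointwise}. This is exactly how Lemma \ref{lem:mixed_tail_finite_ambient_pointwise} was derived in the two-metric case.

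\textbf{Step 1: construct the chains.} Fix a finite set $K\subseteq T$ with $t_0\in K$. For every $j\in\cbr{1,\ldots,m}$, Lemma \ref{lem:multi_regime_pointwise_measure_chain} supplies an admissible sequence $\mathcal T_j=(T_{j,k})_{k\ge0}$ of subsets of $K$. Each sequence satisfies $T_{j,0}=\cbr{t_0}$ and is eventually equal to $K$. Simultaneously for every $t\in K$, its cost satisfies
\begin{align*}
G_j(t)\le C^{(1)}_{\alpha_j}\rbr{\Phi_j(t)+v_j(t)}.
\end{align*}
This construction is deterministic, so no probability is spent here.

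\textbf{Step 2: apply the common refinement.} These $m$ sequences meet exactly the hypotheses of Lemma \ref{lem:multi_regime_finite_pointwise}. For every $u\ge1$, that lemma gives an event of probability at least $1-e^{-u}$ on which, simultaneously for all $t\in K$,
\begin{align*}
\|Z_t\|\le A_{m,\boldsymbol\alpha}\sum_{j=1}^m\sbr{G_j(t)+u^{1/\alpha_j}v_j(t)}.
\end{align*}
I then substitute the bound from Step 1 into this inequality.

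\textbf{Step 3: absorb the constants.} The substitution leaves a stray term $C^{(1)}_{\alpha_j}v_j(t)$ for each $j$. Since $u\ge1$, we have $v_j(t)\le u^{1/\alpha_j}v_j(t)$, so each stray term is absorbed into the confidence term. This gives \eqref{eq:multi_regime_finite_ambient_bound} with
\begin{align*}
B_{m,\boldsymbol\alpha}:=A_{m,\boldsymbol\alpha}\max_j\rbr{C^{(1)}_{\alpha_j}+1}.
\end{align*}
This constant depends only on $m$ and $\boldsymbol\alpha$, and in particular not on $K$.

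\textbf{Where care is needed.} There is no genuine obstacle, because all the work sits in the two earlier lemmas. The only point to watch is whether an infinite $\Phi_j(t)$ at some $t\in K$ causes trouble. It does not: the inequality at that $t$ holds trivially. Moreover, the chain bound of Lemma \ref{lem:multi_regime_pointwise_measure_chain} is stated for every $t\in K$ with values in $[0,\infty]$, so the finite points are unaffected. Finally, the event provided by Lemma \ref{lem:multi_regime_finite_pointwise} does not depend on $t$, so the conclusion is indeed simultaneous over $K$.
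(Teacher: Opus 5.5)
Your proposal is correct and follows the paper's proof essentially step for step. You take the $m$ measure-generated chains from Lemma~\ref{lem:multi_regime_pointwise_measure_chain}, feed them into the common-refinement estimate of Lemma~\ref{lem:multi_regime_finite_pointwise}, and absorb the stray terms $C^{(1)}_{\alpha_j}v_j(t)$ into the confidence terms using $u\ge1$, which gives the same constant $B_{m,\boldsymbol\alpha}=A_{m,\boldsymbol\alpha}\max_j(C^{(1)}_{\alpha_j}+1)$ that the paper uses.
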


\begin{proof}[Proof of Lemma
\ref{lem:multi_regime_finite_ambient_pointwise}]
By Lemma \ref{lem:multi_regime_pointwise_measure_chain}, for every
$j\in\cbr{1,\ldots,m}$, there exists an admissible sequence
$\mathcal T_j=(T_{j,k})_{k\ge0}$, anchored at $t_0$ and eventually
equal to $K$, such that, simultaneously for every $t\in K$,
\begin{align*}
G_j(t)
:=
\sum_{k\ge0}2^{k/\alpha_j}d_j(t,T_{j,k})
\le
C_{\alpha_j}^{(1)}
\rbr{\Phi_j(t)+v_j(t)}.
\end{align*}
Apply Lemma \ref{lem:multi_regime_finite_pointwise} to these $m$
sequences. Outside an event of probability at most $e^{-u}$, for every
$t\in K$, we have
\begin{align*}
\|Z_t\|\le
A_{m,\boldsymbol\alpha}
\left[
\sum_{j=1}^mG_j(t)
+
\sum_{j=1}^m u^{1/\alpha_j}v_j(t)
\right]\le
A_{m,\boldsymbol\alpha}
\sum_{j=1}^m
\left[
C_{\alpha_j}^{(1)}\Phi_j(t)
+
C_{\alpha_j}^{(1)}v_j(t)
+
u^{1/\alpha_j}v_j(t)
\right].
\end{align*}
Since $u\ge1$, we have $v_j(t)
\le
u^{1/\alpha_j}v_j(t)$. Therefore, we obtain
\begin{align*}
C_{\alpha_j}^{(1)}v_j(t)
+
u^{1/\alpha_j}v_j(t)
\le
\rbr{C_{\alpha_j}^{(1)}+1}
u^{1/\alpha_j}v_j(t).
\end{align*}
Consequently, \eqref{eq:multi_regime_finite_ambient_bound} holds with
\begin{align*}
B_{m,\boldsymbol\alpha}
:=
A_{m,\boldsymbol\alpha}
\max_{1\le j\le m}
\rbr{C_{\alpha_j}^{(1)}+1}.
\end{align*}
We finish the proof.
\end{proof}

We are now ready to pass from finite sets to the full separable index
space and prove the main theorem. The passage uses countable dense subsets
of rational upper-level sets. It does not allocate separate failure
probabilities to these sets.

\begin{proof}[Proof of Theorem \ref{thm:multi_regime_majorization}]
Fix $u\ge1$. For positive rational vectors
\begin{align*}
\boldsymbol q=(q_1,\ldots,q_m)\in\mathbb Q_{>0}^m,
\qquad
\boldsymbol s=(s_1,\ldots,s_m)\in\mathbb Q_{>0}^m,
\end{align*}
define
\begin{align*}
H_{\boldsymbol q,\boldsymbol s}
:=
\cbr{
t\in T:
\Phi_j(t)\le q_j
\text{ and }
v_j(t)\le s_j
\text{ for every }1\le j\le m
}.
\end{align*}
By Lemma \ref{lem:multi_regime_measurability}, every set
$H_{\boldsymbol q,\boldsymbol s}$ is Borel measurable.

We next choose countable dense subsets of these sets. Since $(T,\rho)$
is a separable pseudo-metric space, its metric quotient by the relation
$\rho(s,t)=0$ is a separable metric space and hence is second countable.
Every subspace of a second-countable space is second countable and
therefore separable. Consequently, each
$H_{\boldsymbol q,\boldsymbol s}$ is separable with respect to $\rho$.

For every pair $(\boldsymbol q,\boldsymbol s)$, choose a countable
$\rho$-dense subset
\begin{align*}
D_{\boldsymbol q,\boldsymbol s}
\subseteq
H_{\boldsymbol q,\boldsymbol s}.
\end{align*}
If the corresponding set is empty, take
$D_{\boldsymbol q,\boldsymbol s}=\varnothing$. The collection of
positive rational vectors is countable, and hence
\begin{align*}
D
:=
\cbr{t_0}
\cup
\bigcup_{
\boldsymbol q\in\mathbb Q_{>0}^m,\,
\boldsymbol s\in\mathbb Q_{>0}^m
}
D_{\boldsymbol q,\boldsymbol s}
\end{align*}
is countable. Beginning with the anchor, enumerate it as
$D=\cbr{t_0,t_1,t_2,\ldots}$. If $D$ is finite, repeat $t_0$ so that the same notation remains
valid. Define the increasing sequence of finite sets
\begin{align*}
K_N
:=
\cbr{t_0,t_1,\ldots,t_N},
\ N\ge1.
\end{align*}

For every $N\ge1$, let $O_N$ be the actual conclusion event
\begin{align*}
O_N
:=
\bigcap_{t\in K_N}
\cbr{
\|Z_t\|
\le
B_{m,\boldsymbol\alpha}
\sum_{j=1}^m
\cbr{
\Phi_j(t)+u^{1/\alpha_j}v_j(t)
}
}.
\end{align*}
By Lemma \ref{lem:multi_regime_finite_ambient_pointwise}, we have that
\begin{align*}
\PP(O_N)
\ge
1-e^{-u},
\ N\ge1.
\end{align*}
Moreover, since $K_N\subseteq K_{N+1}$ and the deterministic
right-hand side does not depend on $N$, the events are decreasing and $O_{N+1}
\subseteq
O_N$. 
By the continuity of probability,
\begin{align*}
\PP\left(
\bigcap_{N\ge1}O_N
\right)
=
\lim_{N\to\infty}\PP(O_N)
\ge
1-e^{-u}.
\end{align*}

Fix an outcome in $\bigcap_{N\ge1}O_N$. By the null-set modification
made at the beginning of the appendix, the sample path
$t\mapsto Z_t$ is $\rho$-continuous. Let $t\in T$ satisfy
\begin{align*}
\Phi_j(t)<\infty,
\ j\in\cbr{1,\ldots,m}.
\end{align*}
Every $v_j(t)$ is finite because the pseudo-metrics are finite-valued.
Choose arbitrary positive rational numbers such that
\begin{align*}
q_j>\Phi_j(t),
\ 
s_j>v_j(t),
\ j\in\cbr{1,\ldots,m}.
\end{align*}
Then $t\in H_{\boldsymbol q,\boldsymbol s}$. Since
$D_{\boldsymbol q,\boldsymbol s}$ is $\rho$-dense in this set, there
exists a sequence $(x_r)_{r\ge1}
\subseteq
D_{\boldsymbol q,\boldsymbol s}$ such that
\begin{align*}
\rho(x_r,t)
\rightarrow
0.
\end{align*}
Every $x_r$ belongs to the countable set $D$ and hence to some
$K_N$. On $\bigcap_{N\ge1}O_N$, we therefore have
\begin{align*}
\|Z_{x_r}\|
\le
B_{m,\boldsymbol\alpha}
\sum_{j=1}^m
\cbr{
\Phi_j(x_r)+u^{1/\alpha_j}v_j(x_r)
}\le
B_{m,\boldsymbol\alpha}
\sum_{j=1}^m
\cbr{
q_j+u^{1/\alpha_j}s_j
}.
\end{align*}
By the path continuity, we have
$Z_{x_r}
\rightarrow
Z_t
\ \text{in }(\mathbb B,\|\cdot\|)$. Therefore, we have,
\begin{align*}
\|Z_t\|
\le
B_{m,\boldsymbol\alpha}
\sum_{j=1}^m
\cbr{
q_j+u^{1/\alpha_j}s_j
}.
\end{align*}
Letting $q_j\downarrow\Phi_j(t)$ and
$s_j\downarrow v_j(t)$ through rational values for every $j$, we
obtain
\begin{align*}
\|Z_t\|
\le
B_{m,\boldsymbol\alpha}
\sum_{j=1}^m
\cbr{
\Phi_j(t)+u^{1/\alpha_j}v_j(t)
}.
\end{align*}
The event does not depend on $t$. Thus, the inequality holds
simultaneously for every point at which all the functionals are finite.
If at least one functional is infinite, the conclusion is trivial.

Finally, let $\delta\in(0,1)$ and set $u
:=
\log\frac{e}{\delta}$. Then $u\ge1$ and $e^{-u}
=
\frac{\delta}{e}
\le
\delta$.
Therefore, with probability at least $1-\delta$, simultaneously for
every $t\in T$,
\begin{align*}
\|Z_t\|
\le
B_{m,\boldsymbol\alpha}
\sum_{j=1}^m
\left[
\Phi_j(t)
+
v_j(t)
\left(
\log\frac{e}{\delta}
\right)^{1/\alpha_j}
\right].
\end{align*}
The theorem follows with
$C_{m,\boldsymbol\alpha}:=B_{m,\boldsymbol\alpha}$.
\end{proof}

\section{Proofs in Section \ref{sec:pointwise_ergodic_diffusion}}
\label{app:proofs_diffusion}

The diffusion results in Appendix \ref{app:mathematical_tools} are stated
directly in the notation of Section
\ref{sec:pointwise_ergodic_diffusion}. We verify the explicit Poincar\'e
constant, identify the long-run variance, and check the measurability and
continuity hypotheses of the mixed-tail theorem.

\subsection{The Poincar\'e constant and the long-run variance}

Since $b$ and $\sigma$ are globally Lipschitz, they are locally Lipschitz
and satisfy a linear-growth bound. Thus Assumption
\ref{ass:diffusion_coefficients} is a special case of Definition~5 and
equation~(4.26) of \citet{aeckerle2021concentration}. The statement
following that equation and their equation~(4.27) give the unique global
strong solution, its ergodicity, and its invariant density $\pi$.

We first verify the quantities in Lemma
\ref{lem:tool_loukianov_poincare}. Let
\begin{align*}
F(x):=
\int_{-\infty}^x\pi(y)dy,\ 
\overline F(x):=
\int_x^\infty\pi(y)dy,
\end{align*}
and define
\begin{align*}
B_+:=
\sup_{x>0}
\overline F(x)
\int_0^x\frac{2}{\ell(y)}dy,\ 
B_-:=
\sup_{x<0}
F(x)
\int_x^0\frac{2}{\ell(y)}dy.
\end{align*}
For $y\ge x\ge A$, by the drift assumption, we have that
\begin{equation}
G(y)-G(x)\le-2\gamma(y-x).
\label{eq:diffusion_G_right}
\end{equation}
Since $a\ge\underline\sigma^2$, it follows that, for $x\ge A$, we have
\begin{equation}\label{eq:diffusion_tail_ratio}
\overline F(x)=
\frac1{\mathsf Z}
\int_x^\infty\frac{e^{G(y)}}{a(y)}dy\le
\frac{e^{G(x)}}{2\gamma\mathsf Z\underline\sigma^2}=\frac{\ell(x)}{2\gamma\underline\sigma^2}.
\end{equation}
Moreover, we have
\begin{equation*}
\ell(x)
\int_0^x\frac{2}{\ell(y)}dy\le
\ell(x)
\int_0^A\frac{2}{\ell(y)}dy+2\int_A^x e^{-2\gamma(x-y)}dy\le
\ell(x)
\int_0^A\frac{2}{\ell(y)}dy
+
\frac1\gamma.
\end{equation*}
Hence $B_+<\infty$. Similarly, we have that $B_-<\infty$. According to Lemma \ref{lem:tool_loukianov_poincare}, we know that the stationary
diffusion is reversible and that the Poincare constant is
\begin{align}
C_P
:=
4\max\cbr{B_+,B_-}
<\infty
\label{eq:diffusion_explicit_CP}.
\end{align}

We next identify the long-run variance. First, we claim that 
$\cV_\pi(f)$ is a finite seminorm for every bounded Borel function $f$. Indeed, on
$[A,\infty)$, equation \eqref{eq:diffusion_tail_ratio} gives
\begin{align*}
\frac{R_f(x)^2}{\ell(x)}=\frac{(\int_{x}^\infty\overline{f}(y)\pi(y)dy)^2}{\ell(x)}\le\frac{4\|f\|_{\infty}^2(\int_{x}^{\infty}\pi(y)dy)^2}{\ell(x)}\le
4\|f\|_\infty^2
\frac{\overline F(x)^2}{\ell(x)}\le
\frac{2\|f\|_\infty^2}
{\gamma\underline\sigma^2}\overline F(x).
\end{align*}
Taking the two arguments in
\eqref{eq:diffusion_G_right} to be $A$ and $x$, we have that $G(x)
\le
G(A)-2\gamma(x-A),
\ x\ge A$.

Combining this inequality with
\eqref{eq:diffusion_tail_ratio}, we obtain $\overline F(x)
\le
\frac{e^{G(A)}}{2\gamma\mathsf Z\underline\sigma^2}
e^{-2\gamma(x-A)},\ x\ge A$.

Therefore, $\int_{A}^{\infty}\frac{2\|f\|_\infty^2}
{\gamma\underline\sigma^2}\overline F(x)dx$ is finite because of the exponential tail above. Similarly, the left
tail integral $\int_{-\infty}^{-A}\frac{2\|f\|_\infty^2}
{\gamma\underline\sigma^2}\overline F(x)dx$ is finite. Lastly, by nature, the integral $\int_{-A}^{A}\frac{2\|f\|_\infty^2}
{\gamma\underline\sigma^2}\overline F(x)dx$ is finite. Therefore, we obtain that
$\int_{\RR}\frac{R_f(x)^2}{\ell(x)}dx<\infty$ is finite and hence $\cV_\pi(f)
=
2\|\frac{R_f(x)}{\sqrt{\ell}(x)}\|_{L_2}<\infty$ is finite.

The centering map is linear, and hence we have that $R_{\lambda f+\eta h}
=
\lambda R_f+\eta R_h$ for all bounded Borel functions $f,h$ and all
$\lambda,\eta\in\RR$. Consequently,
\begin{align*}
\cV_\pi(\lambda f)
=
|\lambda|\cV_\pi(f),\ 
\cV_\pi(f+h)
\le
\cV_\pi(f)+\cV_\pi(h),
\end{align*}
where the second inequality follows from the triangle inequality in the
$L^2$ norm. Together with $\cV_\pi(0)=0$, this proves that
$\cV_\pi$ is a finite seminorm.

Now, to apply Lemma \ref{lem:tool_gao_bernstein}, it remains to show that $\cV_\pi(f)^2=\lim_{t\to\infty}
\frac1t
\operatorname{Var}_{\PP_\pi}
\left(
\int_0^t\overline f(X_s)ds
\right)$.
For arbitrary $t>0$, we write $Z_t(f)
:=
\frac1{\sqrt t}
\int_0^t\overline f(X_s)ds$. By Lemma \ref{lem:tool_vdvz_scalar_clt}, we have that
\begin{align}
Z_t(f)
\ \Longrightarrow\
\mathcal N\rbr{0,\cV_\pi(f)^2},
\ t\to\infty.
\label{eq:diffusion_scalar_clt}
\end{align}
We next upgrade this weak convergence to convergence of second moments.
Set
$A_f=
2\sqrt{C_P}\|\overline f\|_{L^2(\pi)},
B_f=
C_P\|\overline f\|_\infty$.
By Inequality \eqref{eq:tool_gao_coarse_two_sided} in Lemma \ref{lem:tool_gao_bernstein}, for $t\ge1$ and
$u>0$, we have that
\begin{align*}
\PP_\pi\cbr{
|Z_t(f)|>A_f\sqrt u+B_fu
}
\le2e^{-u}.
\end{align*}
Consequently, by direct algebra, for any $t$, we have
\begin{align*}
    \EE_\pi|Z_t(f)|^4=4\int_{0}^{\infty}x^3\PP_{\pi}(|Z_t(f)|>x)dx\le 8\int_{0}^{\infty}x^3[e^{-x^2/(4A_f^2)}+e^{-x/2B_f}]dx\le 64A_f^4+768B_f^4.
\end{align*}
Taking supremum over $t$, we obtain that
$\sup_{t\ge1}\EE_\pi|Z_t(f)|^4
\le
64A_f^4+768B_f^4
<\infty$. It follows that $\cbr{Z_t(f)^2:t\ge1}$ is uniformly integrable. Indeed,
for every $M>0$, by Markov inequality, we have
\begin{align*}
\sup_{t\ge1}
\EE_\pi\left[
Z_t(f)^2
\mathbf 1_{\{Z_t(f)^2>M\}}
\right]
\le
\frac1M
\sup_{t\ge1}\EE_\pi|Z_t(f)|^4
\longrightarrow0.
\end{align*}

By \eqref{eq:diffusion_scalar_clt} and continuous mapping theorem,
$Z_t(f)^2$ converges in distribution to $Y^2$, where
$Y\sim\mathcal N(0,\cV_\pi(f)^2)$. Uniform integrability therefore implies that $\EE_\pi Z_t(f)^2\rightarrow
\EE Y^2
=
\cV_\pi(f)^2$.

Finally, since $\pi(\overline f)=0$, we know that
$\EE_\pi Z_t(f)=0$, which implies $\EE_\pi Z_t(f)^2
=
\frac1t
\operatorname{Var}_{\PP_\pi}
\left(
\int_0^t\overline f(X_s)\,ds
\right)$. Therefore, we have that
\begin{align}
\lim_{t\to\infty}
\frac1t
\operatorname{Var}_{\PP_\pi}
\left(
\int_0^t\overline f(X_s)\,ds
\right)
=
\cV_\pi(f)^2.
\label{eq:diffusion_asymptotic_variance}
\end{align}
Thus, the assumptions in Lemma \ref{lem:tool_gao_bernstein} are satisfied. We apply Inequality \eqref{eq:tool_gao_variance_domination} and \eqref{eq:tool_gao_two_sided} to obtain
\begin{align}
\cV_\pi(f)^2
\le
2C_P\|\overline f\|_{L^2(\pi)}^2,
\label{eq:diffusion_variance_domination}
\end{align}

Now, we are ready to prove Proposition \ref{prop:diffusion_mixed_tail}
\begin{proof}[Proof of Proposition \ref{prop:diffusion_mixed_tail}]
The centered uniform norm and $\cV_\pi$ are seminorms, so $d_1$ and
$d_2$ are finite-valued pseudo-metrics and they are joint Borel measurable.

For $g:=(f-h)-\pi(f-h)$, equation
\eqref{eq:diffusion_variance_domination} gives
\begin{align*}
d_2(f,h)^2=
2\cV_\pi(f-h)^2\le
4C_P\|g\|_{L^2(\pi)}^2
\le
4C_P\|g\|_\infty^2.
\end{align*}
Thus, we have $d_2(f,h)
\le
2\sqrt{\frac{\tau}{C_P}}d_1(f,h)$ and every countable $d_1$-dense set is also dense for $d_1+d_2$.

The jointly Borel evaluation map implies that $f\mapsto\pi(f)$ is
Borel. Since the continuous adapted diffusion process $X$ is measurable, the map
\begin{align*}
(\omega,s,f)
\longmapsto
f(X_s(\omega))-\pi(f)
\end{align*}
is also jointly measurable. Parametrized integration therefore shows that
$(\omega,f)\mapsto Z_\tau(f)(\omega)$ is jointly measurable. For every
sample path, notice that
$|Z_\tau(f)-Z_\tau(h)|\le
\sqrt\tau\|g\|_\infty
=
\frac{\tau}{C_P}d_1(f,h)$,
so the sample paths are $(d_1+d_2)$-continuous. The zero function is an
anchor because $Z_\tau(0)=0$.

Recall that from the deduction above, the conditions in Lemma \ref{lem:tool_gao_bernstein} are satisfied. Hence, we apply Lemma \ref{lem:tool_gao_bernstein} to get that for every $u>0$,
\begin{align*}
\PP_\pi\cbr{
|Z_\tau(f)-Z_\tau(h)|
>
\sqrt{2u}\,\cV_\pi(f-h)
+
\frac{C_P\|g\|_\infty}{\sqrt\tau}u
}
\le2e^{-u}.
\end{align*}
This is exactly our result and we finish the proof.
\end{proof}

\begin{proof}[Proof of Corollary
\ref{thm:diffusion_pointwise_majorization}]
Proposition \ref{prop:diffusion_mixed_tail} verifies the hypotheses of
Theorem \ref{thm:mixed_tail_majorization} with
\begin{align*}
(\alpha_1,d_1,\mu_1)
&=(1,d_1,\mu_1),
&
(\alpha_2,d_2,\mu_2)
&=(2,d_2,\mu_2).
\end{align*}
The anchor radii and local ball-mass terms in that theorem are precisely
$v_1,v_2$ and $\Phi_1,\Phi_2$. Substitution gives
\eqref{eq:diffusion_pointwise_bound} on one event of probability at least
$1-\delta$, simultaneously for every $f\in\cF$.
\end{proof}

\section{Proofs in Section \ref{sec:higher_order_gaussian_chaos}}
\label{app:proofs_higher_order_gaussian_chaos}

\begin{proof}[Proof of Proposition
\ref{prop:higher_order_gaussian_chaos_multi_regime}]
We first prove the multi-regime increment inequality. Fix $s,t\in T$
and define the deterministic difference tensor
\begin{align*}
D_{s,t}
:=
A_s-A_t
=
\rbr{
a_{\boldsymbol i}(s)-a_{\boldsymbol i}(t)
}_{\boldsymbol i\in\mathcal I}.
\end{align*}
By Definition \ref{def:higher_order_gaussian_chaos}, we have
\begin{align}
C_s^{(q)}-C_t^{(q)}
=
\sum_{\boldsymbol i\in\mathcal I}
\rbr{
a_{\boldsymbol i}(s)-a_{\boldsymbol i}(t)
}
\prod_{r=1}^qg^{(r)}_{i_r}.
\label{eq:chaos_increment_representation}
\end{align}

Let $u\ge0$. If $0\le u\le\log2$, then we know that $2e^{-u}
\ge
1$ and \eqref{eq:higher_order_gaussian_chaos_increment} follows
immediately because every probability is at most one.

Suppose now that $u>\log2$, and set $p
:=
\max\cbr{2,u}$. Then $p\ge2$ and $p\ge u$. Moreover, we have
\begin{align}
p\le
4u.
\label{eq:chaos_moment_parameter_comparison}
\end{align}
In fact, if $u\ge2$, then $p=u$. If $\log2<u<2$, then
$p=2<4u$. Thus, \eqref{eq:chaos_moment_parameter_comparison} holds
in either case.

Apply Lemma \ref{lem:tool_latala_gaussian_chaos_moments} to the
tensor $D_{s,t}$. Using
\eqref{eq:chaos_moment_parameter_comparison}, we obtain that
\begin{align*}
\|C_s^{(q)}-C_t^{(q)}\|_{L^p}&\le
L_q
\sum_{\mathcal P\in\mathfrak P_q}
p^{|\mathcal P|/2}\|D_{s,t}\|_{\mathcal P}\le
L_q
\sum_{\mathcal P\in\mathfrak P_q}
(4u)^{|\mathcal P|/2}\|D_{s,t}\|_{\mathcal P}\\
&=
L_q
\sum_{\mathcal P\in\mathfrak P_q}
2^{|\mathcal P|}u^{|\mathcal P|/2}
\|D_{s,t}\|_{\mathcal P}\le
2^qL_q
\sum_{\mathcal P\in\mathfrak P_q}
u^{|\mathcal P|/2}
\|D_{s,t}\|_{\mathcal P},
\end{align*}
where the last inequality uses
$1\le|\mathcal P|\le q$.

Recall that
$K_q=e2^qL_q$. By the definition of $d_{\mathcal P}$, the preceding
inequality implies that
\begin{align}
\|C_s^{(q)}-C_t^{(q)}\|_{L^p}
\le
\frac1e
\sum_{\mathcal P\in\mathfrak P_q}
u^{|\mathcal P|/2}d_{\mathcal P}(s,t).
\label{eq:chaos_increment_lp_control}
\end{align}
If the left-hand side is zero, then
$C_s^{(q)}-C_t^{(q)}=0$ almost surely and the desired inequality is
immediate. Otherwise, Markov's inequality and
\eqref{eq:chaos_increment_lp_control} give
\begin{align*}
&\PP\cbr{
\left|C_s^{(q)}-C_t^{(q)}\right|
>
\sum_{\mathcal P\in\mathfrak P_q}
u^{|\mathcal P|/2}d_{\mathcal P}(s,t)
}\le
\PP\cbr{
\left|C_s^{(q)}-C_t^{(q)}\right|
>
e\|C_s^{(q)}-C_t^{(q)}\|_{L^p}
}\le
e^{-p}
\le
e^{-u}.
\end{align*}
Together with the case $0\le u\le\log2$, this proves
\eqref{eq:higher_order_gaussian_chaos_increment} for every $u\ge0$.

We now verify the pseudo-metrics satisfy the assumptions in Theorem \ref{thm:multi_regime_majorization}. Fix
\begin{align*}
\mathcal P
=
\cbr{I_1,\ldots,I_k}
\in
\mathfrak P_q.
\end{align*}
For feasible arrays $x^{(1)},\ldots,x^{(k)}$ in
\eqref{eq:gaussian_chaos_partition_norm}, define $y_{\boldsymbol i}
:=
\prod_{r=1}^k
x^{(r)}_{\boldsymbol i_{I_r}}$.

Since the blocks $I_1,\ldots,I_k$ form a partition of $[q]$, we have that $\sum_{\boldsymbol i\in\mathcal I}y_{\boldsymbol i}^2=
\prod_{r=1}^k
\sum_{\boldsymbol i_{I_r}}
\rbr{x^{(r)}_{\boldsymbol i_{I_r}}}^2
\le
1$.

Therefore, by the Cauchy-Schwarz inequality, we have
\begin{align}
\|A\|_{\mathcal P}
\le
\|A\|_{\mathrm F},
\label{eq:partition_norm_frobenius_domination}
\end{align}
where $\|A\|_{\mathrm F}
:=
\left(
\sum_{\boldsymbol i\in\mathcal I}a_{\boldsymbol i}^2
\right)^{1/2}$.
In particular, $\|\cdot\|_{\mathcal P}$ is finite-valued and we know that $\|\cdot\|_{\mathcal P}$ is a norm.

Therefore, it follows that
\begin{align*}
d_{\mathcal P}(s,t)
=
K_q\|A_s-A_t\|_{\mathcal P}
\end{align*}
is a pseudo-metric on $T$. Since $t\mapsto A_t$ is Borel measurable,
the map $(s,t)\mapsto
A_s-A_t$ is Borel measurable from $T\times T$ to the finite-dimensional tensor
space. Every norm on a finite-dimensional vector space is continuous.
Consequently, $d_{\mathcal P}$ is jointly Borel measurable.

We now prove separability. Let $\mathcal P_0
:=
\cbr{[q]}$ be the one-block partition. By definition, we know that $\|A\|_{\mathcal P_0}
=
\|A\|_{\mathrm F}$.

Combining this identity with
\eqref{eq:partition_norm_frobenius_domination}, for every $s,t\in T$
we obtain
\begin{align}
K_q\|A_s-A_t\|_{\mathrm F}
\le
\rho(s,t)
\le
K_q\mathsf B_q\|A_s-A_t\|_{\mathrm F}.
\label{eq:chaos_rho_frobenius_equivalence}
\end{align}
Therefore, $\|\cdot\|_F$ and $\rho$ are equivalent metrics. Notice that $\mathcal M
:=
\cbr{A_t:t\in T}$ is a subset of a finite-dimensional Euclidean space and is therefore
separable under the Frobenius norm. Thus, it is also separable under $\rho$.

It remains to prove joint measurability and sample continuity. Since
$t\mapsto a_{\boldsymbol i}(t)$ is Borel measurable for every
$\boldsymbol i\in\mathcal I$, the finite-sum representation
\begin{align*}
C_t^{(q)}
=
\sum_{\boldsymbol i\in\mathcal I}
a_{\boldsymbol i}(t)
\prod_{r=1}^qg^{(r)}_{i_r}
\end{align*}
shows that the map
$(\omega,t)\mapsto C_t^{(q)}(\omega)$ is jointly measurable.

For every $s,t\in T$, another application of the Cauchy--Schwarz
inequality gives
\begin{align*}
\left|C_s^{(q)}-C_t^{(q)}\right|&=
\left|
\sum_{\boldsymbol i\in\mathcal I}
\rbr{a_{\boldsymbol i}(s)-a_{\boldsymbol i}(t)}
\prod_{r=1}^qg^{(r)}_{i_r}
\right|\le
\|A_s-A_t\|_{\mathrm F}
\left(
\sum_{\boldsymbol i\in\mathcal I}
\prod_{r=1}^q\rbr{g^{(r)}_{i_r}}^2
\right)^{1/2}\\
&=
\|A_s-A_t\|_{\mathrm F}
\prod_{r=1}^q\|g^{(r)}\|_2\le
\frac1{K_q}
\prod_{r=1}^q\|g^{(r)}\|_2\,
\rho(s,t),
\end{align*}
where the last inequality follows from the lower bound in
\eqref{eq:chaos_rho_frobenius_equivalence}. The  coefficient
$\frac1{K_q}
\prod_{r=1}^q\|g^{(r)}\|_2$ is finite almost surely. Therefore, with probability one, the map
$t\mapsto C_t^{(q)}$ is $\rho$-Lipschitz and hence
$\rho$-continuous.

Finally, $A_{t_0}=0$ implies $C_{t_0}^{(q)}=0$ almost surely. 

All the assertions of the proposition are proved.
\end{proof}

\begin{proof}[Proof of Theorem \ref{thm:higher_order_gaussian_chaos_pointwise_envelope}]
By Proposition \ref{prop:higher_order_gaussian_chaos_multi_regime}, we know that the decoupled Gaussian chaos satisfies the conditions in Theorem \ref{thm:multi_regime_majorization}. Thus, we apply the theorem directly and obtain the conclusion.
\end{proof}

\begin{proof}[Proof of Corollary \ref{cor:quadratic_gaussian_chaos_pointwise_envelope}]
    We set $q=2$ in Theorem \ref{thm:higher_order_gaussian_chaos_pointwise_envelope} and by direct algebra, the corollary is proved.
\end{proof}

\bibliographystyle{plainnat}
\bibliography{refs}

\end{document}